\def\newrefformat#1#2{%
  \@namedef{pr@#1}##1{#2}}
\def\prettyref#1{\@prettyref#1:}
\def\@prettyref#1:#2:{%
  \expandafter\ifx\csname pr@#1\endcsname\relax%
    \PackageWarning{prettyref}{Reference format #1\space undefined}%
    \ref{#1:#2}%
  \else%
    \csname pr@#1\endcsname{#1:#2}%
  \fi%
}
\newcommand{\mynewthm}[3][dummythm]{%
  \newtheorem{#2}[#1]{#3}%
  \newrefformat{#2}{#3~\ref{##1}}%
}
\theoremstyle{plain}
\theoremstyle{definition}
\theoremstyle{remark}
\newcommand{\myenumlabel}[1]{\textnormal{(\roman{#1})}}
\newcounter{cycprfcnt}
\newenvironment{cycprf}%
{\begin{list}{\PackageWarning{pezz}{Label required for cycprf}}%
  {%
    \setcounter{cycprfcnt}{1}
    \setlength{\itemindent}{-0.5\leftmargin}%
    \newcommand{\cpcurr}{\myenumlabel{cycprfcnt}}%
    \newcommand{\cpnext}{\addtocounter{cycprfcnt}{1}\cpcurr}%
    \newcommand{\cpnum}[1]{\setcounter{cycprfcnt}{##1}\cpcurr}%
    \newcommand{\cpfirst}{\cpnum{1}}%
    \newcommand{\impnext}{\cpcurr{} $\Longrightarrow$ \cpnext.}%
    \newcommand{\impfirst}{\cpcurr{} $\Longrightarrow$ \cpfirst.}%
  }%
}%
{\end{list}}%
\def\indsym#1#2{%
  \setbox0=\hbox{$\m@th#1x$}%
  \kern\wd0%
  \hbox to 0pt{\hss$\m@th#1\mid$\hbox to 0pt{$\m@th#1^{#2}$}\hss}%
  \lower.9\ht0\hbox to 0pt{\hss$\m@th#1\smile$\hss}%
  \kern\wd0}
\newcommand{\ind}[1][]{\mathop{\mathpalette\indsym{#1}}}
\def\nindsym#1#2{%
  \setbox0=\hbox{$\m@th#1x$}%
  \kern\wd0%
  \hbox to 0pt{\hss$\m@th#1\not$\kern1.4\wd0\hss}
  \hbox to 0pt{\hss$\m@th#1\mid$\hbox to 0pt{$\m@th#1^{#2}$}\hss}%
  \lower.9\ht0\hbox to 0pt{\hss$\m@th#1\smile$\hss}%
  \kern\wd0}
\DeclareMathOperator{\tp}{tp}
\DeclareMathOperator{\lstp}{lstp}
\DeclareMathOperator{\Th}{Th}
\DeclareMathOperator{\tS}{S}
\DeclareMathOperator{\dcl}{dcl}
\DeclareMathOperator{\bdd}{bdd}
\DeclareMathOperator{\Cb}{Cb}
\newcommand{\lseq}[1][]{\equiv^{\mathrm{Ls}}_{#1}}
\renewcommand{\emptyset}{\varnothing}
\def\models{\vDash}
\DeclareMathOperator{\Aut}{Aut}
\DeclareMathOperator{\dom}{dom}
\newcommand{\fP}{\mathfrak{P}}
\newcommand{\cC}{\mathcal{C}}
\newcommand{\cL}{\mathcal{L}}
\newcommand{\cM}{\mathcal{M}}
\newcommand{\indP}{\ind[\fP]}
\DeclareMathOperator{\mcl}{mcl}
\DeclareMathOperator{\mcleq}{=^{mcl}}
\newcommand{\Fd}{\mathfrak{d}}
\newcommand{\Fr}{\mathfrak{r}}
\title{Lovely pairs of models: the non first order case}
\author{Itay Ben-Yaacov}
\address{Itay Ben-Yaacov\\
  Massachusetts Institute of Technology\\
  Department of Mathematics\\
  77 Massachusetts Avenue, Room 2-101\\
  Cambridge, MA 02139-4307\\
  USA}
\email{pezz@math.mit.edu}
\urladdr{http://www-math.mit.edu/\textasciitilde pezz}
\date{\today}
\thanks{This is the result of research conducted in the
  University of Illinois at Urbana-Champaign, as a part of CNRS-UIUC
  collaboration.  The author would like to thank Anand Pillay and
  Evgueni Vassiliev for hospitality and fruitful discussions}
\thanks{At the time of the writing of this paper, the author was
  a graduate student with the \'Equipe de Logique
  Math\'ematique of Universit\'e Paris VII}
\keywords{simple theories -- lovely pairs}
\subjclass[2000]{03C45,03C95}
\begin{document}

\begin{abstract}
  We prove that for every simple theory $T$ (or even simple thick
  compact abstract theory) there is a (unique) compact abstract theory
  $T^\fP$ whose saturated models are the lovely pairs of $T$.
  Independence-theoretic results that were proved in \cite{ppv:pairs}
  when $T^\fP$ is a first order theory are proved for the general
  case: in particular $T^\fP$ is simple and we characterise
  independence.
\end{abstract}

\maketitle

\section*{Introduction}

Lovely pairs of models of a simple first order theory were
defined in \cite{ppv:pairs}.
Under an additional assumption, namely that the equivalent conditions
of \prettyref{fct:ax} below hold, it is shown that lovely pairs
provide an elegant means for the study of independence-related
phenomena in such a theory.
This generalises a similar treatment of stable theories through the
study of beautiful pairs in \cite{poiz:paires}.

A lovely pair of models of $T$ is given by $(M,P)$ where $M \models T$ and
$P$ is a new unitary predicate defining an elementary sub-structure
with quite a few additional properties (see \prettyref{dfn:lpair}
below).
The following is proved in \cite{ppv:pairs} (the analogue for
beautiful pairs of models of a stable theory is proved in
\cite{poiz:paires}):

\begin{fct}
  Let $T$ be a complete simple first order theory.
  Then all lovely pairs of $T$ have the same first order theory $T^P$
  in the language $\cL \cup \{P\}$.
\end{fct}

This does not mean, however, that the complete first order theory
$T^P$ is meaningful.
For example, in order to use $T^P$ for the study of lovely pairs we
would like saturated models of $T^P$ to be ones.
In fact, it is proved that:

\begin{fct}
  \label{fct:ax}
  The following conditions are equivalent (for a first order simple
  theory $T$):
  \begin{enumerate}
  \item The $|T|^+$-saturated models of $T^P$ are precisely the
    lovely pairs.
  \item There is a $|T|^+$-saturated model of $T^P$ which is a lovely
    pair.
  \item The notion of elementary extension of models of $T^P$
    coincides with that of a free extension
    (\prettyref{dfn:freepair}).
  \item Every model of $T^P$ embeds elementarily in a lovely pair.
  \end{enumerate}
\end{fct}

If this holds (the ``good'' case), then $T^P$ is simple as well, and
provides an elegant
means for the study of certain independence-related properties of $T$
itself, as mentioned above.
If this fails (the ``bad'' case), then the first order theory $T^P$ is
pretty much useless.
This was noticed by Poizat in the stable case, where things
go well if and only if $T$ does not have the finite cover property;
the analogous criterion for simple theories can be argued to be the
correct analogue of non-f.c.p.\ in simple theories.

The goal of the present article is to show that in the proper context,
one can do in the ``bad'' case just the same things as in the ``good''
one.
By the previous discussion it should be clear that this cannot be done
in first order model theory, and we
need to look for a more general framework.
Such a framework, that of \emph{compact abstract theories}, or
\emph{cats}, is exposed in \cite{pezz:posmod}.
Simplicity theory is developed for cats in \cite{pezz:catsim}, but has
a few setbacks with respect to first order simplicity.
In \cite{pezz:fnctcat} we define the notion of a \emph{thick} cat
(which is still much more general than a first order theory), and
prove all basic properties of simplicity theory in this framework.

Here we prove that if $T$ is a thick simple cat (so in particular, if
$T$ is a simple first order theory), then
there exists a unique cat $T^\fP$, whose saturated models are
precisely the lovely pairs of $T$.
$T^\fP$ is also thick and simple and has a language of the same
cardinality as $T$.
In addition, there is a description (a notion close to interpretation,
defined in \cite{pezz:fnctcat}) of $T^\fP$ in $T$, which gives 
us an elegant characterisation of independence in $T^\fP$.
We also prove that if $T$ is Hausdorff, semi-Hausdorff, supersimple,
stable, stable and Robinson, or one-based, then so is $T^\fP$.

It follows from \prettyref{prp:lpair} below that
$T^\fP$ is (equivalent to) a first order theory if and only if $T$ is
and the equivalent conditions of \prettyref{fct:ax} hold.
Thus, for a first order theory $T$, the ``good'' and ``bad'' cases are
simply the first order case and the non-first-order one, respectively,
of which the former was studied in \cite{ppv:pairs}.
In the present paper, however, such considerations as whether $T^\fP$
is first order or not are hardly of any importance.

The fundamental tool is the construction of a cat from a compact
abstract
elementary category, as described in \cite{pezz:posmod}.  This tool
allows us in certain cases to fix the notion of elementary extension
as we like: since we know that things go well if and only if the
elementary extensions are the free extensions, we turn things
around and try to construct a cat where free extensions play the role
of elementary ones.
As it turns out, this is indeed one of the cases where this technique
works, and the only assumption on the original theory we actually use
is that it is a thick simple cat.

We can think of several reasons why this may be an interesting thing
to do:
First, this gives a nice and rather comprehensive set of examples of
the basic tools used in the framework of cats, and in particular of
simplicity theory.
Second, this is an additional example supporting our thesis that
simplicity in thick cats lacks nothing in comparison with simplicity in
first order theories (alas, this is not true for simplicity in
arbitrary cats).
Third, and maybe most important, thick simple cats are (or at least,
seem to be) the \emph{correct} framework for the treatment of lovely
pairs, and therefore results proved in this context should be the most
general.

Moreover, this framework allows us to state and prove results that are
either unnatural or altogether meaningless in the first order case.
Even when proving something that makes perfect sense in a first
order theory, we may use for its proof tools that would be unnatural
in the treatment of a first order theory, and this may eventually
yield a simpler or more elegant proof.
In fact, some results appearing in \cite{ppv:pairs} (notably the
preservation of one-basedness) were originally proved quite easily
in this context, and it took a bit of effort to find first order
counterparts for the ``feline'' proofs.

\bigskip

Let us give a few reminders concerning cats.
Most of this comes from \cite{pezz:posmod}.
\begin{dfn}
  Let $\cL$ be a first order language, and fix a \emph{positive
    fragment} of $\cL$, i.e., a subset $\Delta \subseteq \cL$ which is
  closed for positive boolean combinations (actually, $\cL$ is
  completely unimportant, all we want is $\Delta$).
  A \emph{formula}, unless otherwise qualified, is always a
  member of $\Delta$, and similarly for partial types.
  \\
  A \emph{universal domain} (with respect to $\Delta$) is a structure $U$
  satisfying:
  \begin{enumerate}
  \item Strong homogeneity: If $A,B \subseteq U$ are small and $f : A \to B$ is
    a $\Delta$-homomorphism (i.e., for every $\varphi \in \Delta$ and $a \in A$, $U \models \varphi(a)
    \Longrightarrow U \models \varphi(f(a))$), then $f$ extends to an automorphism of $U$  (so
    in particular, $f$ is a $\Delta$-isomorphism of $A$ and $B$).
  \item Compactness: Every small partial $\Delta$-type over $U$ which is
    finitely realised in $U$ is realised in $U$.
  \end{enumerate}
  Although this is not required by the definition, we will also assume
  that every existential formula, i.e., formula of the form
  $\exists y\,\varphi(x,y)$ where $\varphi \in \Delta$, is equivalent in $U$ to a partial $\Delta$-type.
  (If not, we can always close $\Delta$ under existential quantification
  without harming either compactness or homogeneity; this is just
  usually unnecessary.)
\end{dfn}

Saturated and strongly homogeneous models of first order theories are
one example of universal domain (with $\Delta = \cL$).
Another easy example which we will refer to later on is that of
Hilbert spaces:
\begin{exm}
  Let $H$ be the unit ball of a very large Hilbert space.
  Let $\Delta$ be the set of all formulas of the form $s \leq \|\sum_{i<n} \lambda_ix_i\|
  \leq r$ (closed under positive boolean combinations).
  Then $H$ is a universal domain w.r.t.\ $\Delta$.
\end{exm}

The negative universal theory of a universal domain
$$\Th_\Pi(U) = \{\forall\bar x\,\lnot\varphi : \varphi(\bar x) \in \Delta, U \models \forall\bar
x\,\lnot\varphi(\bar x)\}$$
has the property that the category of subsets of its e.c.\ models
has the amalgamation property (there is a little twist here, since
the notion of e.c.\ models is defined with respect to
$\Delta$-homomorphisms).
A negative universal theory having this property is called a
\emph{positive Robinson theory}.
Conversely, if $T$ is a positive Robinson theory, and in addition is
complete (i.e., the category of its e.c.\ models has the joint
embedding property), then $T = \Th_\Pi(U)$ for some universal domain
$U$; otherwise, every \emph{completion} of $T$ has a universal
domain.
Thus the giving of a universal domain is essentially the same as the
giving of a complete positive Robinson theory.
Henceforth, a \emph{theory} means a positive Robinson theory, unless
explicitly stated otherwise.

To a universal domain $U$, or to a theory $T$, we
associate type-spaces: for every set of indices $I$ we define
$\tS_I(T)$ as the set of all maximal types in $\alpha$ variables
which are consistent with $T$.
If $U$ is a universal domain for $T$ then this is the same as
$U^I/\Aut(U)$, by homogeneity.
We put a compact and $T_1$ topology on $\tS_I(T)$ by taking the closed
sets to be those defined by partial types.
If $\tS_n(T)$ is Hausdorff for every $n < \omega$ then $\tS_I(T)$ is
Hausdorff for every set $I$, and we say that $T$ is \emph{Hausdorff}.
One consequence of being Hausdorff is that the property of two tuples
to have the same type is a type-definable property.
If only the latter holds, we say that $T$ is \emph{semi-Hausdorff}.
An even weaker property is \emph{thickness}, defined in
\cite{pezz:fnctcat}, which says that indiscernibility of sequences is
type-definable.

We render the mapping $I \mapsto \tS_I(T)$ a contravariant functor in the
obvious manner: if $f : I \to J$ is any mapping, then $f^* : \tp(a_j : j
\in J) \mapsto \tp(a_{f(i)} : i \in I)$ defines a continuous mapping $f^* :
\tS_J(T) \to \tS_I(T)$.
We call this the \emph{type-space functor} of $T$, denoted $\tS(T)$.
Conversely, up to a change of language, we can reconstruct the
positive Robinson theory $T$ from $\tS(T)$ (see
\cite[Theorem~2.23]{pezz:posmod}).

Finally, in \cite[Section~2.3]{pezz:posmod} we characterise when a
class of structures equipped with
a notion of embedding has a universal domain which is also a universal
domain for a positive Robinson theory.
First, we represent such a class with concrete category $\cM$ all of
whose morphisms are injective (the embeddings); we call it an
\emph{elementary category with amalgamation} if it satisfies some
additional properties: Tarski-Vaught, elementary chain and
amalgamation (see \cite[Definition~2.27]{pezz:posmod}).
In particular, amalgamation gives us a reasonably good notion of type:
if $M$ and $N$ are models (i.e., objects of $\cM$)
and $\bar a \in M$ and $\bar b \in N$ are tuples of the same length, then
they have the same type if and only if we can embed $M$ and $N$ in a
third model $P$ such that the images of $\bar a$ and $\bar b$ in $P$
coincide.
This defines a contravariant functor $\tS(\cM)$ from sets to sets as
above.
Using this notion of types we obtain some rudimentary semantics that
allow us to state the three last requirements (see
\cite[Definition~2.32]{pezz:posmod}): that the collection of types is
not a proper class; that types of infinite tuples are determined by
the types of finite sub-tuples; and most importantly, that we can
put compact and $T_1$ topologies on each $\tS_I(\cM)$ such that its
morphisms are closed continuous mappings.
This last requirement is equivalent to saying that there is a
language $\cL$, a positive fragment $\Delta \subseteq \cL$, and a way to render
every object of $\cM$ an $\cL$-structure, such that:
\begin{enumerate}
\item The satisfaction of a $\Delta$-formula by a tuple in $M \in \cM$ is
  determine by the type of the tuple.
\item If $\Sigma$ is a set of $\Delta$-formulas, possibly in infinitely many
  variables, and $\Sigma$ is finitely realised in $\cM$, then it is
  realised in $\cM$.
\end{enumerate}
If all these requirements hold, then there exists a positive Robinson
theory $T$ (in fact $T = \Th_\Pi(\cM)$) satisfying $\tS(T) \cong \tS(\cM)$,
and e.c.\ models of $T$ embed in models of $\cM$ and vice versa.
Also, $T$ is complete if and only if $\cM$ has the joint embedding
property, and in this case a universal domain for $T$ is a universal
domain for $\cM$ in some reasonable sense.

Since we have three equivalent presentation (a
positive Robinson theory, a compact type-space functor and a compact
elementary category) of the same concept, we prefer to refer to this
concept with a generic name: \emph{compact abstract theory}, or
\emph{cat}.
The third presentation of cats is the main tool we use in the first
section.
In the fourth section we concentrate on the second approach and study
the relations between the type-space functors of our theory $T$ and of
the theory of its pairs $T^\fP$.

\bigskip

As for simplicity and independence, the thumb rule is that everything
that's true in a simple first order theory (by which we mean the main
results of \cite{kim:fork,kp:sim,hkp:can}) is true in a simple thick
cat.
Part of this is shown for arbitrary simple cats in \cite{pezz:catsim}
and the rest (in particular the extension axiom) is shown in
\cite{pezz:fnctcat} under the hypothesis of thickness.

We consider the distinction between the ``real'' sorts and the
hyperimaginary sorts immaterial: an \emph{element} is usually a real
one, but as we may adjoin any hyperimaginary sort to the original
theory it may in fact be in any such sort.
We use lowercase letters to denote elements and (possibly infinite)
tuples thereof, and uppercase letters to denote sets of such elements
or tuples (of course, any set can be enumerated into a tuple, but
sometimes it's convenient to make a conceptual distinction).

We recall that if $a$ is a tuple of elements,
or even a hyperimaginary
element, then $\bdd(a)$ (respectively $\dcl(a)$) is the collection of
all hyperimaginary elements $b$ such that $\tp(b/a)$ has boundedly
many realisations  (respectively, a unique realisation).
If $A \subseteq B$ then $A$ is \emph{boundedly closed
in $B$} if $B \cap \bdd(A) = A$.

It is also a fact that if $c \in \bdd(b)$ then $a \ind_b c$ for every
$a$, and $a \ind_b a$ if and only if $a \in \bdd(b)$.

\section{The category of $T$-pairs}

\begin{conv}
  We fix a thick simple cat $T$.
  \\
  We may consider it as a positive Robinson theory with respect to a
  positive fragment $\Delta$.  By an \emph{elementary} mapping we mean a
  $\Delta$-elementary one, that is a $\Delta$-homomorphism.
\end{conv}

We do not assume that $T$ is complete.
Therefore, instead of working inside a single universal domain for
$T$, we work with the category of e.c.\ models of $T$ (or more
precisely, of subsets thereof).
The reader should keep in mind the existence of a partial elementary
mapping between two e.c.\ models of $T$ implies that they are models
of the same completion, so we could assume that $T$ is complete
without much loss of generality.

We aim at the construction of $T^\fP$.  Our starting point is the
notions of pair and free extension/embedding:

\begin{dfn}
  \label{dfn:freepair}
  \begin{enumerate}
  \item A \emph{pair} is a couple $(A,P)$ where $A$ is a subset of
    some e.c.\ model of $T$, and $P$ is a unary predicate on $A$,
    such that $P(A)$ is boundedly closed in $A$ (i.e., $A \cap
    \bdd(P(A)) = P(A)$).
    We allow ourselves
    to omit $P$ when no ambiguity may arise, convening that it is part
    of the structure on $A$.
  \item A \emph{free embedding} of pairs $f : (A,P) \to (B,P)$ is an
    elementary embedding $f : A \to B$ such that $f(P(A)) \subseteq P(B)$ and
    $f(A) \ind_{f(P(A))} P(B)$.
    (Independence here is calculated in $B$, i.e., in any e.c.\ model
    or universal domain in which $B$ is embedded.)
  \item The \emph{free category of pairs}, $\fP$, is the category
    whose objects are pairs and whose morphisms are free embeddings.
  \end{enumerate}
\end{dfn}

\begin{lem}
  \label{lem:fremb}
  Assume that $f : (A,P) \to (B,P)$ is a free embedding.
  Then $P(f(A)) = f(P(A)) = f(A) \cap P(B)$.
\end{lem}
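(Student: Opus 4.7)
The plan is to observe first that the equality $P(f(A)) = f(A) \cap P(B)$ is essentially by definition: in the pair $(B,P)$ the predicate $P$ picks out the subset $P(B) \subseteq B$, so restricted to $f(A) \subseteq B$ it gives exactly $f(A) \cap P(B)$. Hence only the equality $f(P(A)) = f(A) \cap P(B)$ requires genuine argument.

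The inclusion $f(P(A)) \subseteq f(A) \cap P(B)$ is immediate from the definition of a free embedding, which builds in the requirement $f(P(A)) \subseteq P(B)$.

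For the reverse inclusion, I would take $b \in f(A) \cap P(B)$ and write $b = f(a)$ for some $a \in A$. The key step is to exploit the independence clause $f(A) \ind_{f(P(A))} P(B)$ from \prettyref{dfn:freepair}: since $b$ lies both in $f(A)$ and in $P(B)$, we obtain $b \ind_{f(P(A))} b$. By the fact recalled at the end of the reminders ($a \ind_C a$ iff $a \in \bdd(C)$), this yields $b \in \bdd(f(P(A)))$.

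Now $f$ is elementary and $\bdd$ is preserved under elementary maps, so from $f(a) \in \bdd(f(P(A)))$ we conclude $a \in \bdd(P(A))$. Combined with $a \in A$ and the hypothesis built into the definition of a pair that $P(A)$ is boundedly closed in $A$, this forces $a \in P(A)$, whence $b = f(a) \in f(P(A))$. The only nonroutine idea is to feed $b$ into both sides of the independence to deduce algebraicity over $f(P(A))$; everything else is a direct invocation of the definitions.
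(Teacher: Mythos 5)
Your proof is correct and follows essentially the same route as the paper's: both arguments observe that $f(P(A)) \subseteq f(A)\cap P(B)$ is built into the definition, and both obtain the reverse inclusion by plugging $f(A)\cap P(B)$ into both sides of the independence $f(A)\ind_{f(P(A))}P(B)$ to get algebraicity over $f(P(A))$, then pulling back along the elementary map $f$ and invoking that $P(A)$ is boundedly closed in $A$. The only cosmetic difference is that the paper phrases the final step as a set identity $\bdd(f(P(A)))\cap f(A)=f(\bdd(P(A))\cap A)=f(P(A))$ while you argue elementwise.
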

\begin{proof}
  Clearly $f(P(A)) \subseteq P(B) \cap f(A) = P(f(A))$, whereby
  \begin{gather*}
    f(A) \ind_{f(P(A))} P(B) \Longrightarrow P(f(A)) \ind_{f(P(A))} P(f(A)) \Longrightarrow
    P(f(A)) \subseteq \bdd(f(P(A)))
  \end{gather*}
  But then $P(f(A))$ is a subset of:
  \begin{gather*}
    \bdd(f(P(A))) \cap f(A) = f(\bdd(P(A)) \cap A) = f(P(A))
  \end{gather*}
  And the claim ensues.
\end{proof}

It follows that if the inclusion $(A,P_A) \subseteq (B,P_B)$ is a free
embedding, then $P_B$ agrees with $P_A$ on $A$, and it is legitimate
to use $P$ without further qualification.

\begin{rmk}
  One easily verifies that the identity is a free embedding, as well
  as the composition of any two free embeddings $f : A \to B$ and $g :
  B \to C$: $g(f(P(A))) \subseteq g(P(B)) \subseteq P(C)$ and $f(A) \ind_{f(P(A))}
  P(B) \Longrightarrow g(f(A)) \ind_{g(f(P(A)))} g(P(B))$, so $g(B) \ind_{g(P(B))}
  P(C) \Longrightarrow g(f(A)) \ind_{g(f(P(A)))} P(C)$ by transitivity.
  \\
  Since in a free embedding we have $f(A) \cap P(B) = f(P(A))$, we may
  usually assume that a free embedding is in fact an inclusion.
\end{rmk}

We aim to prove that $\fP$ is a compact elementary category, as
defined in \cite{pezz:posmod}.

\begin{prp}
  $\fP$ is an abstract elementary category with amalgamation
  (\cite[Definition 2.27]{pezz:posmod}).
\end{prp}
\begin{proof}
  Clearly, $\fP$ is a concrete category; we verify the properties:
  \begin{description}
  \item[Injectiveness] Every free embedding is injective.
  \item[Tarski-Vaught property] Assume that we have free inclusions $A
    \subseteq C$ and $B \subseteq C$, such that $A \subseteq B$, and we need to show that
    the inclusion $A \subseteq B$ is free as well.  It is clearly elementary,
    $P(A) \subseteq P(B)$ and $A \ind_{P(A)} P(C) \Longrightarrow A \ind_{P(A)} P(B)$.
  \item[Elementary chain property] Let $(A_i,P)$ be pairs for $i <
    \lambda$, $A_i \subseteq A_j$ freely for every $i \leq j < \lambda$, and set $(B,P) =
    \bigcup_{i<\lambda} (A_i,P)$.  By the finite character of dividing $A_i
    \ind_{P(A_i)} P(B) \Longrightarrow \bdd(P(B) \cap A_i) = P(A_i)$ for every $i$,
    so $P(B)$ is boundedly closed in $B$, and $(B,P)$ is a pair.
    Clearly $A_i \subseteq B$ is a free extension for every $i$, and $(B,P)$
    is clearly minimal as such.
  \item[Amalgamation] Assume $f : A \to B$ and $g : A \to C$ are free.
    We may embed $(A,P_A)$, $(B,P_B)$ and $(C,P_C)$ in an appropriate
    universal domain of $T$ such that $f$ and $g$ be the identity
    maps and $B \ind_A C$.
    We know that both $P_B$ and $P_C$ coincide with $P_A$ on $A$, but
    we still do not know that they coincide on $B\cap C$, so let us keep
    the distinction for a while.
    Define $D = B\cup C$ and $P_D = P_B \cup P_C$.
    Then we have:
    \begin{gather*}
      B \ind_A C \Longrightarrow B \ind_A P_C \Longrightarrow B \ind_{P_A} P_C \Longrightarrow B \ind_{P_B} P_D
    \end{gather*}
    And similarly $C \ind_{P_C} P_D$.
    Also, if $a \in D \cap \bdd(P_D)$ then either $a \in B$ or $a \in C$.
    In the former case:
    \begin{gather*}
      B \ind_{P_B} P_D \Longrightarrow a \ind_{P_B} a \Longrightarrow a \in \bdd(P_B) \Longrightarrow a \in P_B
    \end{gather*}
    and in the latter $a \in P_C$, so in either case $a \in P_D$.
    This shows that $D \cap \bdd(P_D) = P_D$, so $(D,P_D)$ is a pair, and
    the inclusions $B\subseteq D$ and $C\subseteq D$ are free.
    (It follows now by \prettyref{lem:fremb} that $P_A$, $P_B$ and
    $P_C$ are simply $P_D$ restricted to $A$, $B$ and $C$,
    respectively.)
  \end{description}
\end{proof}

Therefore we have a notion of type: we recall that $\tp^{(A,P)}(a) =
\tp^{(B,P)}(b)$ if there are free embeddings of $(A,P)$ and $(B,P)$ into
some pair $(C,P)$ such that $a$ and $b$ have the same image, and
$\tS_\alpha(\fP)$ is the set (or class, as far as we know at this point)
of types of $\alpha$-tuples in $\fP$.

The next step is to understand types:

\begin{dfn}
  Let $(A,P)$ be a pair, and $a \in A$ a tuple.
  \begin{enumerate}
  \item $a^c = \Cb(a/P(A))$ (calculated in $T$).
    \\
    As $a^c \in \bdd(P(A)) \cap \dcl(aP(A)) \subseteq \dcl(A)$, this definition
    takes place entirely within $A$; and since the canonical base over
    $P$ is invariant under free extensions, we may write it rather as
    $a^c = \Cb(a/P)$ without concerning ourselves in which specific
    pair this is taken.
  \item $\hat a = a,a^c$.
  \item The \emph{Morley class} $\mcl(a)$ is the set of pure types of
    Morley sequences (of length $\omega$) in $\tp(a/a^c)$.
  \end{enumerate}
\end{dfn}

\begin{lem}
  If $(A,P)$ and $a \in A$ are as above, then $\mcl(a)$ is the set of
  types of Morley sequences in $\tp(a/P(A))$.
\end{lem}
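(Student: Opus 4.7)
The plan is to prove the equality by two inclusions, drawing on the canonical-base machinery for thick simple cats from \cite{pezz:catsim, pezz:fnctcat}. Throughout, write $P$ for $P(A)$ and recall that $a^c\in\bdd(P)$ with $a\ind_{a^c}P$.

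For the inclusion that Morley sequences in $\tp(a/P)$ give elements of $\mcl(a)$: suppose $(a_i)_{i<\omega}$ is such a sequence. Since $a^c\in\bdd(P)$, $P$-indiscernibility forces $a^c$-indiscernibility, and each $a_i$ realises $\tp(a/a^c)$. Indiscernibility over $P$ further gives $\lstp(a_i/P)=\lstp(a/P)$, so every $a_i$ has canonical base $a^c$, whence $a_i\ind_{a^c}P$. Combining with the Morley condition $a_i\ind_P a_{<i}$, transitivity over $a^c$ yields $a_i\ind_{a^c}a_{<i}$, so $(a_i)$ is a Morley sequence in $\tp(a/a^c)$, and its pure type lies in $\mcl(a)$.

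For the reverse, let $(b_i)_{i<\omega}$ be a Morley sequence in $\tp(a/a^c)$. Using the extension axiom, replace $b_0$ by $b'_0\equiv_{a^c}b_0$ with $b'_0\ind_{a^c}P$, and iterate to build a Morley sequence $(a'_i)_{i<\omega}$ over $P$. The point is that $a^c=\Cb(a/P)$ together with $b'_0\ind_{a^c}P$ and $\tp(b'_0/a^c)=\tp(a/a^c)$ forces $\tp(b'_0/P)=\tp(a/P)$, so $(a'_i)$ is a Morley sequence in $\tp(a/P)$. By the first inclusion this is Morley in $\tp(a/a^c)$ with the same starting Lascar strong type as $(b_i)$, and uniqueness of Morley sequences in a fixed lstp delivers the required pure-type equality.

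The delicate step is this last identification of $\tp(b'_0/P)$ with $\tp(a/P)$, since in general non-forking extensions of types in simple theories need not be unique. Resolving it uses the canonical-base property of $a^c$ together with the thick-framework coincidence of types and Lascar strong types over boundedly closed sets; both ingredients are standard for thick simple cats and are where the hypothesis that $T$ is thick and simple enters.
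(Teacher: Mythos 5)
Your proof of the first inclusion (Morley over $P(A)$ $\Rightarrow$ Morley over $a^c$) is essentially fine; the only imprecision is the phrase ``$a^c\in\bdd(P)$, $P$-indiscernibility forces $a^c$-indiscernibility'', which does not follow directly from membership in $\bdd(P)$ but does follow from the standard fact that a Morley sequence over a set is indiscernible over the bounded closure of that set.

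The reverse inclusion, however, has a genuine gap, and the step you flag as ``delicate'' is in fact \emph{false}. You assert that $a^c=\Cb(a/P)$, $b'_0\ind_{a^c}P$ and $\tp(b'_0/a^c)=\tp(a/a^c)$ together force $\tp(b'_0/P)=\tp(a/P)$. This amounts to claiming that $\tp(a/\bdd(a^c))$ has a \emph{unique} non-forking extension to $P$ --- a stability-theoretic phenomenon that fails in general simple theories, and the canonical base does not rescue it. Concretely, in the random graph $\Cb(a/P)$ is $\bdd(\emptyset)$, while $\tp(a/\bdd(\emptyset))$ has a great many non-forking extensions to $P$ (any edge pattern between $a$ and $P$ gives one). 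For the same reason, the closing appeal to ``uniqueness of Morley sequences in a fixed lstp'' is also wrong; were it true, $\mcl(a)$ would always be a singleton and the whole Morley class construction would be vacuous --- again the random graph gives two pure types of Morley sequence over $\bdd(\emptyset)$, ``all edges'' and ``no edges''. The actual content of the reverse inclusion is not that the extension is unique, but that \emph{every} pure type of an $a^c$-Morley sequence is realised by some $P$-Morley sequence. Proving this requires building the $P$-Morley sequence $(a'_i)$ by recursion, using the independence theorem over the boundedly closed hyperimaginary $a^c$ at each stage to amalgamate $\lstp(a/P)$ with the desired type $\tp(b_n/a^c b_{<n})$ over $a^c$, together with an extraction-of-indiscernibles step at the end. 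Your replace-and-iterate plan discards the joint $a^c$-type of the $b_i$ and hence loses control of the pure type anyway. So the proof as written does not establish the lemma.
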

\begin{proof}
  Easy.
\end{proof}

\begin{lem}
  \label{lem:typchr}
  Let $(A,P)$ and $(B,P)$ be two pairs, and $a \in A$, $b \in B$ be two
  possibly infinite tuples.  Then the following are equivalent:
  \begin{enumerate}
  \item $\tp^{(A,P)}(a) = \tp^{(B,P)}(b)$ (in the sense of $\fP$).
  \item $\mcl^{(A,P)}(a) = \mcl^{(B,P)}(b)$
  \item $\mcl^{(A,P)}(a) \cap \mcl^{(B,P)}(b) \neq \emptyset$
  \item $\tp^T(\hat a^{(A,P)}) = \tp^T(\hat b^{(B,P)})$.
  \end{enumerate}
\end{lem}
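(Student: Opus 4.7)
The natural route is the cycle (i) $\Rightarrow$ (iv) $\Rightarrow$ (ii) $\Rightarrow$ (iii) $\Rightarrow$ (iv) $\Rightarrow$ (i), where (ii) $\Rightarrow$ (iii) is trivial (Morley sequences exist). The content is therefore in the three non-trivial links.

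For (i) $\Rightarrow$ (iv), suppose there are free embeddings $(A,P) \to (C,P) \leftarrow (B,P)$ identifying the image of $a$ with that of $b$, call this common image $c \in C$. Freeness gives $A \ind_{P(A)} P(C)$, so canonical bases are preserved: $\Cb(c/P(C)) = \Cb(a/P(A)) = a^c$, and symmetrically $b^c = \Cb(c/P(C))$. Hence $\hat a$ and $\hat b$ both map to $(c,c^c)$ in $C$, so $\tp^T(\hat a) = \tp^T(\hat b)$.

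For (iv) $\Rightarrow$ (ii), the Morley class $\mcl(a)$ is, by the preceding lemma, the set of $T$-types of Morley sequences in $\tp(a/P(A))$; since $a \ind_{a^c} P(A)$, any such Morley sequence is a Morley sequence in $\tp(a/a^c)$ over $a^c$, whose type is determined by $\tp^T(a/a^c) \subseteq \tp^T(\hat a)$. So $\mcl(a)$ is a function of $\tp^T(\hat a)$ alone.

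For (iii) $\Rightarrow$ (iv), fix a common type $p \in \mcl(a) \cap \mcl(b)$, and choose Morley sequences $(a_i)$ in $\tp(a/P(A))$ and $(b_i)$ in $\tp(b/P(B))$ both realising $p$. By $T$-homogeneity, move everything into a single universal domain of $T$ so that $a_i = b_i$ for all $i$. The key fact from simplicity theory is that the canonical base of a Morley sequence is $\bdd$-interdefinable with the canonical base of the type it witnesses: $\Cb((a_i)_{i<\omega}) \equiv^{\bdd} a^c$, and similarly for $b$. So after the identification, $a^c = b^c$ as hyperimaginaries. Since $a \equiv_{a^c} a_0 = b_0 \equiv_{b^c} b$, we conclude $\tp^T(a,a^c) = \tp^T(b,b^c)$.

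Finally, for (iv) $\Rightarrow$ (i), by $T$-homogeneity place $(A,P)$ and $(B,P)$ in a common universal domain of $T$ with $a = b$ and $a^c = b^c$, and consider the pair $(A_0,P_0) = (\{a,a^c\},\{a^c\})$: this is a genuine pair because $\{a^c\}$ is automatically boundedly closed in $\{a,a^c\}$ (the only potential issue, $a \in \bdd(a^c)$, forces $a \in P(A) \cap P(B)$, so the argument degenerates harmlessly). The inclusions $(A_0,P_0) \hookrightarrow (A,P)$ and $(A_0,P_0) \hookrightarrow (B,P)$ are free: the predicate is preserved by construction, and $a \ind_{a^c} P(A)$, $a \ind_{a^c} P(B)$ are the defining properties of the canonical base. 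Amalgamating these free inclusions via the amalgamation property established in the previous proposition yields a pair in which $a$ and $b$ receive the same image, i.e.\ $\tp^\fP(a) = \tp^\fP(b)$.

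The main obstacle is the argument (iii) $\Rightarrow$ (iv): one must invoke the fact that in a simple (thick) cat the canonical base of a type is recoverable, up to $\bdd$, from any of its Morley sequences, so that a bare isomorphism of Morley sequences in $T$ lifts to an isomorphism of their canonical bases. Once this is in hand, both (iv) $\Rightarrow$ (i) and (i) $\Rightarrow$ (iv) become short applications of the amalgamation lemma for $\fP$ and of the basic invariance of $\Cb(\cdot/P)$ under free extensions.
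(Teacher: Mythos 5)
Your proof is correct and rests on exactly the same ingredients as the paper's: invariance of $\Cb(\cdot/P)$ and of Morley classes under free extensions, recoverability of the canonical base from a Morley sequence, and amalgamation over the pair $(\hat a, \hat a \cap \bdd(a^c))$. The only differences are cosmetic — you run a slightly redundant cycle that visits (iv) twice (proving (i) $\Rightarrow$ (iv) $\Rightarrow$ (ii) where the paper proves (i) $\Rightarrow$ (ii) directly from invariance of $\mcl$ under free extensions), and the correct reading of your ``degenerate'' case is that $P_0$ should be taken as $\hat a \cap \bdd(a^c)$ rather than bare $\{a^c\}$, so that entries of $a$ lying in $\bdd(a^c)$ (hence in $P(A) \cap P(B)$) land in the predicate.
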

\begin{proof}
  \begin{cycprf}
  \item[\impnext] $\mcl$ is invariant under free extensions.
  \item[\impnext] Morley sequences exist.
  \item[\impnext] A canonical base is in the definable closure of a
    Morley sequence.
  \item[\impfirst] We have $\hat a \in \dcl(A)$, $a^c \in \bdd(P(A))$
    and $a \ind_{a^c} P(A)$, so we may consider $(A,P)$ as a free
    extension of $(\hat a,a^c)$.  The same holds for $(\hat b,b^c)
    \subseteq (B,P)$, and now apply amalgamation.
  \end{cycprf}
\end{proof}

We need a tool that would tell us when two
types belong to the same Morley class, and this tool is the notion of
concurrently indiscernible sequences.  In fact, we prove something a
bit stronger than we actually need:

\begin{dfn}
  We say that sequences $\{(a_i^j : j < \alpha) : i < \beta\}$ are
  \emph{concurrently indiscernible over $b$} if for every $i < \beta$ and
  $j_0 < \alpha$
  the sequence $(a_i^j : j_0 \leq j < \alpha)$ is indiscernible over $b \cup
  \{a_{i'}^j : j < j_0, i' < \beta\}$ (in other word, if every tail is
  indiscernible over the union of all corresponding heads).
\end{dfn}

\begin{ntn}
  Let $\tS^{ind}_\alpha(T) \subseteq \tS_{\alpha\times\omega}(T)$ denote the set of types of
  indiscernible sequences of $\alpha$-tuples.
  In particular, $\mcl(a) \subseteq \tS^{ind}_{|a|}(T)$.
\end{ntn}

\begin{lem}
  \label{lem:cncrind}
  Assume $A = \{a_i : i < \beta\}$ are tuples, not necessarily disjoint,
  in some (e.c.) model of $T$, and $q_i \in \tS^{ind}_{|a_i|}(T)$ for
  every $i < \beta$.
  \\
  Then the following are equivalent:
  \begin{enumerate}
  \item There is some pair $(D,P)$ where $D \supseteq A$ and $q_i \in
    \mcl^{(D,P)}(a_i)$ for every $i$.
  \item There are concurrently indiscernible sequences $(b_i^j : j \leq
    \omega)$ with $b_i^\omega = a_i$ and $b_i^{<\omega} \models q_i$ for every $i$.
  \end{enumerate}
\end{lem}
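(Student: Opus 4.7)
The plan is to prove the two implications separately, using standard tools from simplicity theory in a thick simple cat: canonical bases over boundedly closed sets, coincidence of type with Lascar type over such sets (stationarity), and the existence and total indiscernibility of Morley sequences. The previous lemma, which identifies $\mcl^{(D,P)}(a)$ with the set of types of Morley sequences in $\tp(a/P(D))$, is used throughout.

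For $(1) \Longrightarrow (2)$: Given the pair $(D,P)$ with $q_i \in \mcl^{(D,P)}(a_i)$ for each $i$, I construct a doubly-indexed family $(b_i^j : i < \beta, j \leq \omega)$ satisfying: (a) each row $(b_i^j : j \leq \omega)$ is a Morley sequence over $P(D)$ in $\tp(a_i/P(D))$; (b) $b_i^\omega = a_i$; and (c) the rows are mutually independent over $P(D)$. Step (a) is the existence of Morley sequences of length $\omega+1$. For step (b): the $\omega$-th element of such a Morley sequence has the same type over $P(D)$ as $a_i$, hence the same Lascar type over $a_i^c = \Cb(a_i/P) \in \bdd(P(D))$ (using stationarity over $\bdd$-closed sets), so an automorphism over $a_i^c$ can be applied to conjugate it to $a_i$ while transporting the rest of the row to a new Morley sequence. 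For step (c), I iterate the construction over $i$, choosing each row independent over $P(D)$ from those already built. Concurrent indiscernibility over $\emptyset$ of the resulting family follows from concurrent indiscernibility over the larger base $P(D)$: within a row, Morley sequences over $P(D)$ are totally indiscernible, giving the tail indiscernible over $P(D)$ united with any initial segment; across rows, mutual independence over $P(D)$ ensures other rows' heads do not disturb the indiscernibility.

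For $(2) \Longrightarrow (1)$: Given the concurrent system, I must produce a pair $(D,P)$ witnessing each $q_i \in \mcl^{(D,P)}(a_i)$. The crux is to find a common boundedly closed set $M$ such that each row $(b_i^j : j \leq \omega)$ is a Morley sequence over $M$ with the rows mutually independent over $M$. Granted such an $M$, take $P(D) = M$ and let $D$ be an e.c.\ model of $T$ extending $M \cup A$; bounded closedness of $P(D)$ in $D$ is immediate, and the Morley-ness of the whole row over $M$ forces $(b_i^j : j < \omega)$ to be a Morley sequence of $\tp(b_i^\omega/M) = \tp(a_i/M)$ with type $q_i$, giving $q_i \in \mcl^{(D,P)}(a_i)$. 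To construct $M$, I extend the concurrent system by compactness to an arbitrarily long version (indexing by a larger linear order, keeping $b_i^\omega = a_i$ in place and preserving concurrent indiscernibility), and take $M$ to be the boundedly closed set generated by suitable additional elements; the tails-over-heads indiscernibility translates directly into jointly Morley behavior over the single base $M$.

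The main obstacle is the second direction, specifically the extraction of the common $M$. The definition of concurrent indiscernibility is engineered precisely for this: tail-over-head indiscernibility, applied simultaneously across all rows, packages the needed information to amalgamate the per-row canonical bases into a single boundedly closed set. The technical step is a simultaneous canonical-base extraction that relies essentially on the thickness and simplicity of $T$, via the framework developed in \cite{pezz:fnctcat}.
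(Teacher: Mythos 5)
Your plan for $(1)\Rightarrow(2)$ hinges on step (c), making the full rows $(b_i^j : j\le\omega)$ mutually independent over $P(D)$. This cannot be done: the element $b_i^\omega=a_i$ is \emph{prescribed}, and the $a_i$ need not be independent over $P(D)$ --- the lemma even allows them to overlap. (E.g.\ take $\beta=2$, $a_0=a_1\notin\bdd(P(D))$: then $b_0^{\le\omega}\ind_{P(D)} b_1^{\le\omega}$ would force $a_0\in\bdd(P(D))$.) When you say ``choosing each row independent over $P(D)$ from those already built'', you are free to move only the auxiliary entries $b_i^{<\omega}$, not $b_i^\omega$, so mutual independence of the whole rows is unattainable. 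The paper sidesteps this by a column-by-column induction: at stage $k$ it arranges that the free parts $\{b_i^{<\omega} : i<\beta\}\cup\{A\}$ are $PB^{<k}$-independent, then fixes the $k$-th column $B^k$ and, working with one $i$ at a time and using that $a_i\ind_{PB^{<k}b_i^k}B^k$, realigns $(b_i^j : k<j<\omega)$ by an automorphism over $a_iPB^{<k}b_i^k$ so that the tail is Morley over $PB^{\le k}$. The invariant obtained is not mutual independence of rows, but rather that each tail $(b_i^j : j\ge k)$ is Morley over $PB^{<k}$ for every $i$ and $k$ --- and that is exactly concurrent indiscernibility (tails indiscernible over $PB^{<k}\supseteq B^{<k}$, hence over the heads).

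The same misconception undermines your $(2)\Rightarrow(1)$. You seek a single boundedly closed $M$ over which all rows are Morley and mutually independent; this again would force the $a_i$ to be independent over $M$, which is not given. No such $M$ exists in general, and the promised ``simultaneous canonical-base extraction'' is left as a gesture rather than carried out. The paper instead takes $D=AB^{<\omega}$, $P(D)=D\cap\bdd(B^{<\omega})$, and works with a \emph{separate} canonical base per row: from $B^{<k}$-indiscernibility of $(b_i^j : k\le j\le\omega)$ it deduces $a_i\ind_{b_i^{<\omega}}P(D)$, hence $a_i^c=\Cb(a_i/P(D))=\Cb(b_i^\omega/b_i^{<\omega})$; then, since an indiscernible sequence of length $\omega+1$ is a Morley sequence over the canonical base of its last term over the rest, $(b_i^{<\omega})\models q_i$ witnesses $q_i\in\mcl^{(D,P)}(a_i)$. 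There is no need to stretch the system by compactness, nor to find a common base. In short, the central idea you rely on --- mutual independence of the rows over the base --- is false in both directions, and the correct mechanism in each direction is tail-over-heads Morleyness with per-row canonical bases, not row independence.
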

\begin{proof}
  \begin{cycprf}
  \item[\impnext] For every $i < \beta$, find a Morley sequence $(b_i^j : j
    \leq \omega)$ over $P$ such that $b_i^\omega = a_i$ and $b_i^{<\omega} \models q_i$.
    Write $B^k = \{b_i^k : i < \beta\}$.
    \\
    We now give a construction by induction on $k < \omega$.
    At the beginning of the $k$th step
    we assume that $(b_i^j : k \leq j \leq \omega)$ is a Morley sequence over
    $PB^{<k}$ for every $i$.  During the step we may move $(b_i^j : k
    \leq j < \omega)$ around a bit in order to obtain the same thing for $k
    + 1$ without moving $AB^{<k}$, nor changing
    $\tp(b_i^{<\omega}/a_iPB^{<k})$.  From this point onward, $B^k$ is
    fixed as well.
    \\
    We may assume for every $i$ that $b_i^{<\omega} \ind_{a_iPB^{<k}} A$,
    whereby $b_i^{<\omega} \ind_{PB^{<k}} A$.  We may further assume that
    $\{b_i^{<\omega} : i < \beta\} \cup \{A\}$ is a $PB^{<k}$-independent set.
    At this point we fix $B^k = \{b_i^k: i < \beta\}$ for the rest of the
    construction, and observe that $B^k \ind_{PB^{<k}} A$.
    \\
    We now work for each $i$ separately: we observe that $a_i
    \ind_{PB^{<k}b_i^k} B^k$ by the previous paragraph and that
    $(b_i^j : k < j \leq \omega)$ is a Morley sequence over $PB^{<k}b_i^k$
    with $a_i = b_i^\omega$.  Therefore there is an automorphism fixing
    $a_iPB^{<k}b_i^k$ that when applied to $(b_i^j : k < j < \omega)$
    gives an $PB^{\leq k}$-indiscernible sequence, and in fact a Morley
    sequence over $PB^{\leq k}$, as required.  We now fix
    $\tp(b_i^{<\omega}/a_iPB^{\leq k})$, and the construction continues.
    \\
    At the end we obtain concurrently indiscernible Morley sequences
    over $P$ with the required types.
  \item[\impfirst] Let $D = AB^{<\omega}$ and $P(D) = D \cap \bdd(B^{<\omega})$.
    Then $(D,P)$ is a pair.
    \\
    Since $(b_i^j : k \leq j \leq \omega)$ is $B^{<k}$-indiscernible, we have
    $a_i = b_i^\omega \ind_{b_i^{[k,\omega)}} B^{<k}$ for every $k < \omega$,
    whereby $a_i \ind_{b_i^{<\omega}} P(D)$, and $a_i^c =
    \Cb(b_i^\omega/b_i^{<\omega})$.
    \\
    Since $(b_i^j : j \leq \omega)$ is an indiscernible sequence it is a
    Morley sequence over $a_i^c$, and $\tp(b_i^{<\omega}) \in
    \mcl^{(D,P)}(a_i)$.
  \end{cycprf}
\end{proof}

\begin{ntn}
  \begin{enumerate}
  \item For $p \in \tS(\fP)$ define $\mcl(p)$ as $\mcl(a)$ for any $a
    \models p$: by \prettyref{lem:typchr} this is well defined.
    Similarly, for a set $F \subseteq \tS_\alpha(\fP)$, we define $\mcl(F) = \bigcup_{p \in
      F} \mcl(p) = \bigcup_{\tp^\fP(a) \in F} \mcl(a)$.
  \item For tuples $a_{<\omega}$ and $b_{<\omega}$ (in an e.c.\ model of $T$)
    such that all $a_i$
    and $b_i$ are of the same length $\alpha$, say that $a_{<\omega} \mcleq
    b_{<\omega}$ if there exist $a_\omega = b_\omega$ such that $(a_i : i
    \leq \omega)$ and $(b_i : i \leq \omega)$ are concurrently indiscernible.
    Since $T$ is thick, this property is defined by a partial type
    $r_\alpha(x_{<\omega},y_{<\omega})$.
    We usually omit the subscript $\alpha$ since it can be deduced from the
    context.
  \end{enumerate}
\end{ntn}

Then \prettyref{lem:cncrind} gives:

\begin{cor}
  \label{cor:mcleq}
  \begin{enumerate}
  \item If $q,q' \in \tS_{\alpha\times\omega}(T)$, then $x_{<\omega} \mcleq y_{<\omega} \land
    q(x_{<\omega}) \land q'(y_{<\omega})$ is consistent if and only if there is $p
    \in \tp_\alpha(\fP)$ such that $q,q' \in \mcl(p)$.
  \item Let $p \in \tS_\alpha(\fP)$ and $q \in \mcl(p)$.
    Then the partial type $\exists y_{<\omega} \,[r(y_{<\omega},x_{<\omega}) \land q(y_{<\omega})]$
    defines the set $\mcl(p)$, which is in particular
    closed.
    (An existential quantification on a partial type is equivalent to
    a partial type, by compactness.)
  \end{enumerate}
\end{cor}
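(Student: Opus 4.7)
The plan is to read off both parts of the corollary from \prettyref{lem:cncrind}, once we correctly identify the parameters.

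For part (i), I would apply \prettyref{lem:cncrind} with $\beta=2$ and a common tuple $a_0=a_1=c$. A realisation of the partial type $x_{<\omega}\mcleq y_{<\omega}\land q(x_{<\omega})\land q'(y_{<\omega})$ is precisely a pair of concurrently indiscernible sequences sharing a tail tuple $c$, with the respective heads of length $\omega$ realising $q$ and $q'$; this is exactly condition (ii) of \prettyref{lem:cncrind} for the data $(a_0,a_1)=(c,c)$, $(q_0,q_1)=(q,q')$. By the equivalence proved there, this is in turn equivalent to the existence of a pair $(D,P)$ with $c\in D$ and both $q,q'\in\mcl^{(D,P)}(c)$. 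Setting $p=\tp^{(D,P)}(c)\in\tS_\alpha(\fP)$ gives one direction, and the other is immediate: realise $p$ by some $c$ and apply the implication in the opposite direction.

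For part (ii), I would fix $p\in\tS_\alpha(\fP)$ and $q\in\mcl(p)$ and show that a type $q'\in\tS_{\alpha\times\omega}(T)$ satisfies $\exists y_{<\omega}\,[r(y_{<\omega},x_{<\omega})\land q(y_{<\omega})]$ if and only if $q'\in\mcl(p)$. Having fixed $q'$, the existential condition is equivalent, by the definition of $r$ as the partial type defining $\mcleq$, to the consistency of $x_{<\omega}\mcleq y_{<\omega}\land q'(x_{<\omega})\land q(y_{<\omega})$. By part (i) this happens exactly when there is some $p'\in\tS_\alpha(\fP)$ with $q,q'\in\mcl(p')$; since $q\in\mcl(p)\cap\mcl(p')$, the equivalence $\text{(iii)}\Rightarrow\text{(i)}$ of \prettyref{lem:typchr} forces $p=p'$, so this is the same as $q'\in\mcl(p)$. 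The fact that the existential of a partial type is again (equivalent to) a partial type is pointed out in the statement and is the standard compactness argument, and the closedness of $\mcl(p)$ in $\tS_{\alpha\times\omega}(T)$ follows from its being defined by a partial type.

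The only non-trivial input is \prettyref{lem:cncrind}, which we are taking as given; everything else is pure bookkeeping (keeping track of the shared tail when instantiating that lemma, and using thickness to interpret $\mcleq$ as a partial type). The main conceptual point where one must be careful is not to confuse the two roles of the common tuple $c$: in part (i) it appears as the index-$\omega$ entry of both concurrently indiscernible sequences, and simultaneously as an $\alpha$-tuple whose type in the pair $(D,P)$ is the $p$ we are constructing.
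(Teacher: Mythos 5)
Your argument is correct and fills in exactly what the paper leaves implicit: the corollary appears with no proof environment, preceded only by ``Then \prettyref{lem:cncrind} gives,'' and instantiating that lemma with $\beta=2$ and a shared tuple $a_0=a_1=c$ is precisely the intended step. Deriving part~(ii) from part~(i) together with the uniqueness in \prettyref{lem:typchr} is likewise the standard bookkeeping.
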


In particular, we may identify $\mcl(p)$ with the partial type $\exists
y_{<\omega} \, [r(x_{<\omega},y_{<\omega}) \land q(y_{<\omega})]$ for any $q \in \mcl(p)$.

In addition if $a_i$ and $b_i$ are $\alpha$-tuples for $i < \omega$, then
$a_{<\omega} \mcleq b_{<\omega}$ if and only if $a'_{<\omega} \mcleq b'_{<\omega}$
for every possible choice of corresponding sub-tuples $a_i' \subseteq a_i$,
$b_i' \subseteq b_i$.
It follows that $\fP$-types satisfy the local character, namely the
types of two infinite tuples are equal if and only if the types of
every two corresponding finite sub-tuples are equal.
We conclude that $\tS(\fP)$ is a set type-space functor.

It is time now to define a language for $\fP$:

\begin{dfn}
  \label{dfn:LP}
  Let $\varphi(x_{<k}) \in \Delta$, where each $x_i$ is an $n$-tuple.
  We define $R_\varphi$ as the set of all $p
  \in \tS_n(\fP)$ such that there is $q(x_{<\omega}) \in \mcl(p)$ satisfying
  $\varphi(x_{<k})$ (that is to say that $\mcl(p)$ is consistent with
  $\varphi$).
  \\
  We interpret $R_\varphi$ as an $n$-ary predicate on pairs in the obvious
  way: if $(A,P) \in \fP$ and $a \in A^n$ then $(A,P) \models R_\varphi(a) \Longleftrightarrow
  \tp^{(A,P)}(a) \in R_\varphi$.
  \\
  We define $\cL^\fP$ as the set of all such predicates, so $|\cL^\fP|
  = |\cL|$.
  We also define $\Delta^\fP = \Delta_0(\cL^\fP)$, that is the positive
  quantifier-free formulas in $\cL^\fP$.
\end{dfn}

\begin{rmk}
  We cheat a bit, since $R_\varphi$ depends not only on $\varphi$ but
  on the actual decomposition of its free variables into $k$
  $n$-tuples, but we are just going to consider that this information
  is contained in $\varphi$.
\end{rmk}

Ordinarily, the set of quantifier-free formulas is closed under
conjunction, disjunction and change of variables.
We recall that if $f : n \to m$ is a map and $\varphi(x_{<n})$ a formula,
then $\psi(y_{<m}) = \varphi(y_{f(0)},\ldots,y_{f(n-1)})$ is obtained from $\varphi$
through a change of variables by $f$, and we may also write $\psi =
f_*(\varphi)$.
However, in this particular language, the finite disjunction and
change of variables are not necessary:

\begin{lem}
  \begin{enumerate}
  \item Let $R_\varphi(x^{<n})$ be an $n$-ary predicate in this language,
    where $\varphi(x^{<n}_0,\ldots,x^{<n}_{k-1}) \in \Delta$.
    Let $y^{<m}$ be another tuple of variables and $f : n \to m$ a map,
    and let us convene that by $y^{f(<n)}$ we mean the tuple
    $y^{f(0)},\ldots,y^{f(n-1)}$.
    Then the formula $f_*(R_\varphi)(y^{<m}) = R_\varphi(y^{f(<n)})$ is
    equivalent to $R_\psi(y^{<m})$ where $\psi(y^{<m}_0,\ldots,y^{<m}_{k-1}) =
    \varphi(y^{f(<n)}_0,\ldots,y^{f(<n)}_{k-1})$.
  \item $R_\varphi \lor R_\psi$ is equivalent to $R_{\varphi\lor\psi}$.
  \end{enumerate}
\end{lem}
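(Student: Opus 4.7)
My plan is to unfold the definition of $R_\varphi$ semantically and observe that both claims follow from the compatibility of Morley sequences over $P(A)$ with coordinate projections and with positive boolean combinations. Unwinding \prettyref{dfn:LP}, for a tuple $c$ in a pair $(A,P)$ one has $(A,P) \models R_\theta(c)$ precisely when some $q \in \mcl(c)$ contains $\theta$, equivalently when some Morley sequence over $P(A)$ in $\tp(c/P(A))$ has its initial segment satisfying $\theta$.

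For (i), the heart of the argument is that coordinate projection takes Morley sequences to Morley sequences: if $(b_i : i < \omega)$ is a Morley sequence over $P$ in $\tp(b/P)$, then $(b_i^{f(<n)} : i < \omega)$ is a Morley sequence over $P$ in $\tp(b^{f(<n)}/P)$. Since $b_i^{f(<n)} \in \dcl(b_i)$, the marginal type and indiscernibility are preserved, and $b_i \ind_P b_{<i}$ gives $b_i^{f(<n)} \ind_P b_{<i}^{f(<n)}$ by monotonicity. Because in a simple theory Morley sequences over a bounded-closed base of a fixed complete type have a uniquely determined joint type, both $\mcl(b)$ and $\mcl(b^{f(<n)})$ are singletons, and projection gives a bijection between them. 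Since $\psi(y^{<m}_0, \ldots, y^{<m}_{k-1})$ is satisfied by a $k$-tuple of $m$-tuples exactly when $\varphi$ is satisfied by the corresponding $k$-tuple of projected $n$-tuples, the semantic conditions $(A,P) \models R_\psi(b)$ and $(A,P) \models R_\varphi(b^{f(<n)})$ coincide.

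For (ii), the argument is nearly tautological: $R_{\varphi \lor \psi}(a)$ holds iff some $q \in \mcl(a)$ contains $\varphi \lor \psi$; since each such $q$ is a complete positive type, membership of a disjunction is equivalent to membership of one of its disjuncts, and the existential quantifier over $q$ distributes through the resulting disjunction to give $R_\varphi(a) \lor R_\psi(a)$.

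The one substantive ingredient, and what I would expect to be the main subtlety, is the uniqueness of Morley sequences over bounded-closed bases used in (i): this is needed to see that every Morley sequence of $\tp(b^{f(<n)}/P)$ arises, up to joint type, as the projection of a Morley sequence of $\tp(b/P)$, and hence to go from the existence of a Morley sequence satisfying $\varphi$ back to one of $\tp(b/P)$ whose projection does. This is a standard feature of simple theories that transfers to thick simple cats, and with it in hand both clauses follow directly from the definitions.
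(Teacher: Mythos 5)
The paper states this lemma without proof, treating both parts as immediate from the definitions. Your argument for (ii) is fine: a complete type, viewed as a point of the type space, lies in the closed set $[\varphi] \cup [\psi]$ exactly when it lies in $[\varphi]$ or in $[\psi]$, and the existential quantifier over $q \in \mcl(a)$ then distributes over the disjunction.

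Part (i), however, rests on a false premise. You justify the needed ``lifting'' step (every Morley sequence of $\tp(b^{f(<n)}/P)$ arises, up to joint type, as the projection of a Morley sequence of $\tp(b/P)$) by asserting that $\mcl(b)$ and $\mcl(b^{f(<n)})$ are singletons, because ``Morley sequences over a bounded-closed base of a fixed complete type have a uniquely determined joint type.'' That is a stability phenomenon, not a simplicity one. In the random graph, for instance, fix a model $M$ and a non-realised $1$-type $p$ over $M$: the $M$-indiscernible, $M$-independent sequence of realisations with all mutual edges and the one with no mutual edges are both Morley sequences in $p$, yet have different types. The paper itself is explicit about this: in the subsection on stability it observes that ``Since $T$ is stable, $\mcl(p)$ is a complete type for every $p$'', which would be vacuous if $\mcl$ were always a singleton; the multiplicity of $\mcl(p)$ is precisely what makes $T^\fP$ non-first-order in the general simple case, and is the entire reason this construction is carried out in the cat framework rather than first-order.

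You do correctly identify the substantive ingredient as the lifting of Morley sequences from sub-tuples to full tuples, and that fact is true --- it is invoked without proof later in \prettyref{lem:descdef} (``It is a fact that $(a_i)$ is a Morley sequence in $\tp(a/P)$ if and only if there are $b_{<\omega}$ such that $(a_ib_i)$ is a Morley sequence in $\tp(ab/P)$''). But it does not follow from uniqueness. One proves it directly: given a Morley sequence $(c_i : i<\omega)$ in $\tp(b^{f(<n)}/P)$, extend it to a long one, choose each $b_i \supseteq c_i$ realising $\tp(b/Pc_i)$ with $b_i \ind_{Pc_i} c_{<\lambda}b_{<i}$, and extract a $P$-indiscernible subsequence; independence and the extraction then give a Morley sequence in $\tp(b/P)$ whose projection has the same type as $c_{<\omega}$. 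So the conclusion of (i) is correct, but the justification you gave must be replaced by this argument (or a citation to the standard sub-tuple fact).
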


This means that every $n$-ary $\Delta^\fP$-formula is equivalent to a
conjunction of $R_\varphi$-predicates, as finite disjunctions and changes
of variables can be transferred to $\varphi$, and similarly for partial
$\Delta^\fP$-types.

\begin{lem}
  \label{lem:Rmcl}
  \begin{enumerate}
  \item Let $\rho(x_{<\omega})$ be a partial $\Delta$-type, which we may assume
    to be closed under finite conjunctions, and let $R_\rho(x) =
    \bigwedge_{\varphi(x_{<k}) \in \rho} R_\varphi(x)$.
    Then $p \vdash R_\rho$ if and only if $\mcl(p) \land \rho$ is consistent.
  \item Conversely, if $R_\varphi(x) \in \cL^\fP$, then $\mcl(R_\varphi)$ is
    defined by the partial type $\exists y_{<\omega} \mcleq x_{<\omega} \,
    \varphi(y_{<k})$; and if $\rho(x) = \bigwedge_{i < \lambda} R_{\varphi_i}(x)$ then $\mcl(\rho) =
    \bigwedge_{i < \lambda} \mcl(R_{\varphi_i})$.
  \end{enumerate}
\end{lem}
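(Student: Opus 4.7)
The plan is to prove (i) by a compactness argument in $\tS_{\alpha\times\omega}(T)$ and to derive both halves of (ii) from \prettyref{cor:mcleq} together with the uniqueness built into \prettyref{lem:typchr}.

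For (i), the easy direction is immediate: if some $q\in\mcl(p)$ realises $\rho$, then $q\models\varphi$ witnesses $p\models R_\varphi$ for every $\varphi\in\rho$, so $p\models R_\rho$. For the converse, recall from \prettyref{cor:mcleq}(ii) that $\mcl(p)$ is a closed subset of the Stone space $\tS_{\alpha\times\omega}(T)$, while each $\varphi\in\rho$ cuts out a closed set $[\varphi]$ there. The hypothesis $p\models R_\rho$ means $\mcl(p)\cap[\varphi]\neq\emptyset$ for every $\varphi\in\rho$; since $\rho$ is closed under finite conjunctions, this family has the finite intersection property, so by compactness of $\tS_{\alpha\times\omega}(T)$ there is a common point, which is the required $q\in\mcl(p)$ realising $\rho$.

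For the first claim of (ii), unwind the definitions: $q\in\mcl(R_\varphi)$ iff there exist a type $p\in\tS(\fP)$ and some $q'\in\mcl(p)$ with $q'\models\varphi$ and $q\in\mcl(p)$. By \prettyref{cor:mcleq}(i), the existence of such a $p$ containing both $q$ and $q'$ in its Morley class is equivalent to the consistency of $r(x_{<\omega},y_{<\omega})\wedge q(x_{<\omega})\wedge q'(y_{<\omega})$, so (after quantifying out the choice of $q'\models\varphi$) to the consistency of $r(x_{<\omega},y_{<\omega})\wedge q(x_{<\omega})\wedge\varphi(y_{<k})$. Since $q$ is complete this is precisely $q\models\exists y_{<\omega}\,[r(x_{<\omega},y_{<\omega})\wedge\varphi(y_{<k})]$, yielding the stated defining type.

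For the second claim of (ii), the inclusion $\mcl(\rho)\subseteq\bigcap_i\mcl(R_{\varphi_i})$ is immediate. For the reverse, take $q\in\bigcap_i\mcl(R_{\varphi_i})$ and, for each $i$, choose some $p_i\models R_{\varphi_i}$ with $q\in\mcl(p_i)$. The key observation is that the implication (iii)$\Rightarrow$(i) of \prettyref{lem:typchr} forces any two $\fP$-types that share a member of their Morley classes to be equal, so all of the $p_i$ coincide with a single $p\in\tS(\fP)$; this $p$ then satisfies every $R_{\varphi_i}$, hence $\rho$, so $q\in\mcl(p)\subseteq\mcl(\rho)$. The only real obstacle is arranging the compactness step in (i) cleanly; once the identifications of $\mcl(p)$ and each $\varphi$ with closed subsets of $\tS_{\alpha\times\omega}(T)$ are in place, everything else is bookkeeping on top of \prettyref{cor:mcleq} and \prettyref{lem:typchr}.
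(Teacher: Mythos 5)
Your argument is correct and takes essentially the same route as the paper's own (very terse) proof: for (i) the paper simply notes that $\mcl(p)$ is closed, hence consistency with $\rho$ reduces to finite consistency, which is exactly your finite-intersection-property argument (both rely, as you note, on $\rho$ being closed under conjunctions); and the paper's ``directly by \prettyref{cor:mcleq}'' for (ii) is precisely the unwinding you carry out via \prettyref{cor:mcleq} and the uniqueness of the Morley class coming from \prettyref{lem:typchr}.
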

\begin{proof}
  \begin{enumerate}
  \item Since $\mcl(p)$ is a closed set, we have that $\mcl(p)$ is
    consistent with $\rho$ if and only if $\mcl(p)$ is finitely
    consistent with $\rho$ if and only if $p \vdash R_\rho$.
  \item Directly by \prettyref{cor:mcleq}
  \end{enumerate}
\end{proof}

So let us see now what can be expressed in this language.
All the following are easily verifiable:

\begin{itemize}
\item Any complete $\fP$-type: for any $p \in \tS(\fP)$ is defined by
  $R_{\mcl(p)}$.
\item Equality: $x = y$ is defined by $R_{x_{<\omega} = y_{<\omega}}(x,y)$.
\item Existential quantification: if $\rho(x,y)$ is a partial
  $\Delta^\fP$-type, and $\mcl(\rho)$ is defined by $\rho'(x_{<\omega},y_{<\omega})$,
  then $\exists y \, \rho(x,y)$ is defined by $R_{\exists y_{<\omega} \, \rho'}$.
  Therefore, our assumption that $\Delta$ eliminates the existential
  quantifier (for $T$) implies that so does $\Delta^\fP$ (for $\fP$).
\item Any $\Delta$-formula $\varphi(x)$: this is just $R_{\varphi(x_0)}$.
\item $x \in P$: take $R_{x_0 = x_1}$.
\item Indiscernibility of sequences:
  write $X = x^{<\omega}$, and let $\rho(X_{<\omega})$ say that $(x_{<\omega}^j : j <
  \omega)$ is an indiscernible sequence (which is possible since $T$ is
  thick).
  Then $R_\rho(X)$ says that $X = (x^j : j < \omega)$ is an indiscernible
  sequence in the sense of $\fP$.
  This shows that $\fP$ is thick.
\item Equality of types: if $T$ is semi-Hausdorff then $\rho(x_{<\omega},y_{<\omega}) = \exists z_{<\omega} \,
  [x_{<\omega} \mcleq z_{<\omega} \equiv y_{<\omega}]$ is a partial type, and
  $R_\rho(x,y)$ defines the property $x \equiv y$, so $T^\fP$ is
  semi-Hausdorff as well.
\item If inequality is positive in $T$, we may say that $x \notin P$, by
  $R_{x_0 \neq x_1}$ (this can be improved).
\end{itemize}

The last thing to prove is that this logic is compact.

\begin{lem}
  Let $\Sigma(X)$ be some partial $\Delta^\fP$-type, where $X$ is a possibly
  infinite tuple.
  Then $\Sigma$ is realised in $\fP$ if and only if it is finitely
  realised in $\fP$.
\end{lem}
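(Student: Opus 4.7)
The plan is to reduce compactness in $\fP$ to compactness in $T$ by producing within $T$ the Morley-sequence witnesses that each $R_\varphi$-predicate demands. As noted immediately after Definition~\ref{dfn:LP}, every partial $\Delta^\fP$-type is equivalent to a conjunction of $R_\varphi$-predicates, so I may assume $\Sigma(X) = \{R_{\varphi_i}(\bar x^i) : i \in I\}$, where $\bar x^i$ is a finite sub-tuple of $X$ and $\varphi_i(\bar z_0, \ldots, \bar z_{k_i-1}) \in \Delta$ has its free variables partitioned into $k_i$ blocks of length $|\bar x^i|$.

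For each $i \in I$ and $j < \omega$ I introduce a fresh tuple $\bar y^{i,j}$ of length $|\bar x^i|$, and set $\bar y^{i,\omega} := \bar x^i$. I then form the partial $\Delta$-type $\Sigma^*$ in the variables $X$ together with all the $\bar y^{i,j}$'s, asserting that the family of sequences $\{(\bar y^{i,j} : j \leq \omega) : i \in I\}$ is concurrently indiscernible and that $\varphi_i(\bar y^{i,0}, \ldots, \bar y^{i,k_i-1})$ holds for every $i \in I$. Thickness of $T$ is what makes $\Sigma^*$ a genuine partial $\Delta$-type: indiscernibility of each tail over the union of all strictly earlier heads is type-definable, and we take the conjunction over $i \in I$ and $j_0 \leq \omega$.

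The next step is to check that $\Sigma^*$ is finitely realised in $T$. A finite $\Sigma^*_0 \subseteq \Sigma^*$ mentions only finitely many indices $I_0 \subseteq I$, and finite realisability of $\Sigma$ in $\fP$ yields a pair $(A,P)$ and tuples $\bar x^i_0 \subseteq A$ such that $R_{\varphi_i}(\bar x^i_0)$ holds for every $i \in I_0$; by \prettyref{cor:mcleq} there is some $q_i \in \mcl^{(A,P)}(\bar x^i_0)$ consistent with $\varphi_i$. The direction \textup{(i)}$\Rightarrow$\textup{(ii)} of \prettyref{lem:cncrind}, applied to the family $(\bar x^i_0, q_i)_{i \in I_0}$, supplies concurrently indiscernible sequences $(\bar b^{i,j} : j \leq \omega)_{i \in I_0}$ with $\bar b^{i,\omega} = \bar x^i_0$ and $\bar b^{i,<\omega} \models q_i$, whence $\varphi_i(\bar b^{i,0}, \ldots, \bar b^{i,k_i-1})$; this realises $\Sigma^*_0$.

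By compactness of $T$, some tuple $(X', (\bar y^{i,j})_{i,j})$ realises $\Sigma^*$ in an e.c.\ model; write $(\bar x^i)'$ for the sub-tuple of $X'$ corresponding to $\bar x^i$. The direction \textup{(ii)}$\Rightarrow$\textup{(i)} of \prettyref{lem:cncrind} then yields a pair $(D,P)$ containing $X' \cup \{\bar y^{i,<\omega} : i \in I\}$ with $\tp(\bar y^{i,<\omega}) \in \mcl^{(D,P)}((\bar x^i)')$; since $\varphi_i(\bar y^{i,0}, \ldots, \bar y^{i,k_i-1})$ holds, $\mcl^{(D,P)}((\bar x^i)')$ is consistent with $\varphi_i$, so $R_{\varphi_i}((\bar x^i)')$ holds in $(D,P)$ and $X'$ realises $\Sigma$ in $\fP$. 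I expect the only real obstacle to be bookkeeping: verifying carefully that concurrent indiscernibility is genuinely type-definable under thickness, and handling overlapping variables between distinct $\bar x^i$-blocks cleanly when translating back and forth between the pair-level and the Morley-sequence-level descriptions.
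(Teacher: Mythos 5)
Your argument is correct and follows essentially the same route as the paper: both reduce compactness in $\fP$ to compactness in $T$ by translating a partial $\Delta^\fP$-type into a partial $\Delta$-type about (concurrently) indiscernible sequences and invoking \prettyref{lem:cncrind} / \prettyref{cor:mcleq}. The only real difference is one of packaging: the paper forms the single partial type $\mcl(\Sigma)(X_{<\omega}) = \bigwedge_{x \subseteq X,\ \varphi(x)\in\Sigma}\mcl(\varphi)(x_{<\omega})$ (each $\mcl(\varphi)$ being an \emph{existentially quantified} partial type, which is still a partial type by closedness of restriction maps), and then observes $\Sigma$ realised $\Longleftrightarrow$ $\mcl(\Sigma)$ consistent $\Longleftrightarrow$ $\mcl(\Sigma)$ finitely consistent $\Longleftrightarrow$ $\Sigma$ finitely realised; you instead Skolemize the existential witnesses into fresh tuples $\bar y^{i,j}$, making the appeal to \prettyref{lem:cncrind} explicit in both directions, at the cost of a bit more variable bookkeeping (which, as you note, is routine — one can tidy the loose ends by adding a trivial $R_{x=x}$ block for each variable of $X$ not already covered by some $\bar x^i$, so that the pair produced by \prettyref{lem:cncrind} contains all of $X'$).
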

\begin{proof}
  Write $\mcl(\Sigma)(X_{<\omega}) = \bigwedge_{x \subseteq X, \varphi(x) \in \Sigma}
  \mcl(\varphi)(x_{<\omega})$.
  Then $\Sigma$ is realised if and only if $\mcl(\Sigma)$ is consistent if and
  only if $\mcl(\Sigma)$ is finitely consistent if and only if $\Sigma$ is
  finitely realised.
\end{proof}

And we conclude:

\begin{dfn}
  $T^\fP = \Th_{\Pi^\fP}(\fP)$ is the negative universal theory of
  pairs in this language.
\end{dfn}

\begin{thm}
  \label{thm:TPcat}
  $T^\fP$ is a thick positive Robinson theory in $\Delta^\fP$, and
  $\tS(\fP) = \tS(T^\fP)$.
  \\
  If $T$ is semi-Hausdorff or Hausdorff, then so is $T^\fP$.
  \\
  If $T$ is complete then so is $T^\fP$; otherwise, there is a
  bijection between completions of $T$ and $T^\fP$.
\end{thm}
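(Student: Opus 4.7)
The plan is to assemble and finish the material developed in the preceding subsection. The first assertion is an application of the reconstruction theorem for compact abstract elementary categories from \cite[Section~2.3]{pezz:posmod} to $\fP$ with the language $\cL^\fP$ of \prettyref{dfn:LP}. The hypotheses are verified as follows: $\fP$ is an elementary category with amalgamation by the preceding proposition; local character of $\fP$-types (noted just after \prettyref{cor:mcleq}) ensures that $\tS(\fP)$ is a set-valued functor; the lemma immediately before the theorem gives compactness of $\Delta^\fP$-types; $T_1$-ness comes from \prettyref{lem:Rmcl}(i), since $p$ is the unique type entailing $R_{\mcl(p)}$; and continuity and closedness of the maps $f^* : \tS_J(\fP) \to \tS_I(\fP)$ reduce to stability of $\Delta^\fP$ under change of variables, which is the first item of the lemma preceding \prettyref{lem:Rmcl}. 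The reconstruction theorem then supplies a positive Robinson theory $T^\fP$ in $\Delta^\fP$ with $\tS(T^\fP) = \tS(\fP)$.

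Thickness and semi-Hausdorffness of $T^\fP$ have already been addressed by two of the bullets in the list preceding the theorem: the indiscernibility of $\fP$-sequences is expressed by an $R_\rho$-predicate (using thickness of $T$), and equality of $\fP$-types is expressed by an $R_\rho$-predicate (assuming $T$ is semi-Hausdorff). The new case is Hausdorffness. I would exploit the map $p \mapsto \mcl(p)$, which by \prettyref{lem:typchr} sends distinct $\fP$-types to disjoint non-empty closed subsets of $\tS_{n\times\omega}(T)$. If $T$ is Hausdorff then so is the compact space $\tS_{n\times\omega}(T)$, hence normal, so the disjoint closed sets $\mcl(p)$ and $\mcl(q)$ admit disjoint open neighbourhoods whose complements are closed sets $[\rho_p], [\rho_q]$ (with $\rho_p, \rho_q$ $\Delta$-partial types) covering $\tS_{n\times\omega}(T)$ while each missing one of $\mcl(p), \mcl(q)$. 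By \prettyref{lem:Rmcl} the $\Delta^\fP$-partial types $R_{\rho_p}$ and $R_{\rho_q}$ then define closed subsets of $\tS_n(T^\fP)$ which avoid $p$ and $q$ respectively but together cover $\tS_n(T^\fP)$, giving the required Hausdorff separation.

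For the completions correspondence, note that every $\Delta^\fP$-sentence is a positive boolean combination of predicates $R_\sigma$ where $\sigma$ is a $\Delta$-sentence, and $R_\sigma$ holds in a pair if and only if $\sigma$ holds in the underlying $T$-structure (the Morley sequence of the empty tuple is trivial, so $\mcl(\varnothing)$ just records the $0$-type of $T$). Hence the forgetful map $\tS_0(T^\fP) \to \tS_0(T)$ is a bijection, and as completions of a positive Robinson theory are indexed by $\tS_0$, this gives the asserted bijection of completions, with completeness transferring from $T$ to $T^\fP$.

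The main obstacle is the Hausdorff step, which is the only place requiring a genuine compactness/normality argument in the $T$-type space rather than an unpacking of the previous lemmas; the remaining parts are bookkeeping on top of the work already done in \prettyref{lem:typchr}, \prettyref{cor:mcleq}, \prettyref{lem:Rmcl}, the preceding bulleted list, and the compactness lemma.
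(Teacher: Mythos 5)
Your proof is correct and follows essentially the same route as the paper: reduction to the reconstruction theorem of \cite[Section~2.3]{pezz:posmod} via the preparatory lemmas, with thickness and semi-Hausdorffness read off the bulleted list. Your Hausdorff argument is in fact identical to the paper's (normality of the compact Hausdorff space $\tS_{n\times\omega}(T)$ gives partial types $\rho,\rho'$ with $\models\rho\lor\rho'$ and each avoiding one of $\mcl(p),\mcl(p')$, then pass to $R_\rho,R_{\rho'}$), and your treatment of completions via the bijection $\tS_0(T^\fP)\to\tS_0(T)$ is a mild syntactic reformulation of the paper's semantic argument, which instead amalgamates pairs over $(\emptyset,\emptyset)$ when they lie in the same completion of $T$.
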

\begin{proof}
  We showed that $\cL^\fP$ is a language for $\fP$ which can define
  complete types and satisfies weak compactness.
  Thus, by \cite{pezz:posmod}, $T^\fP$ is a positive Robinson theory
  in $\Sigma^\fP$, and $\tS(\fP) = \tS(T^\fP)$, where $\Sigma^\fP$ is the set
  of positive existential $\cL^\fP$-formulas.
  However, as we proved that the language $\Delta^\fP$ eliminates the
  existential quantifier, we can replace $\Sigma^\fP$ with $\Delta^\fP$.
  \\
  We also already proved that $T^\fP$ is thick, and if $T$ is
  semi-Hausdorff then so is $T^\fP$.
  If $T$ is Hausdorff, and $p \neq p' \in \tS_n(T^\fP)$, then $\mcl(p) \cap
  \mcl(p') = \emptyset$, so they can be separated by open sets.  In other
  words, there are partial types $\rho(x_{<\omega})$ and $\rho'(x_{<\omega})$,
  inconsistent with $\mcl(p)$ and $\mcl(p')$, respectively, such that
  $\models \rho \lor \rho'$.
  Then $R_\rho$ and $R_{\rho'}$ are inconsistent with $p$ and $p'$,
  respectively, and $\fP \models R_\rho \lor R_{\rho'}$, so $p$ and $p'$ are also
  separated by open sets.
  \\
  If $(A,P)$ and $(B,P)$ are two pairs, and $A$ and $B$ embed in e.c.\
  models of the same completion of $T$, then we can amalgamate the two
  pairs over $(\emptyset,\emptyset)$.
  On the other hand, if $A$ and $B$ belong to distinct completions of
  $T$ then we cannot embed them in a single e.c.\ model.
  Therefore the completions of $T$ are in bijection with those of
  $T^\fP$.
\end{proof}

\begin{conv}
  We shall work in a universal domain $U^\fP$ for (a completion of)
  $T^\fP$.
\end{conv}

\section{Lovely pairs}

Since the origin of the theory of pairs is in lovely ones, we need to
say something about them.

\begin{dfn}
  \label{dfn:lpair}
  Let $\kappa > |T|$.  A pair $(M,P)$ is $\kappa$-lovely if:
  \begin{enumerate}
  \item For every $A \subseteq M$ such that $|A| < \kappa$, and for every type $p
    \in S^T(A)$, there is $a \models p$ in $M$ with $a \ind_A P(M)$.
  \item For every $A \subseteq M$ with $|A| < \kappa$ and every type $p \in \tS(A)$
    which does not divide over $P(A)$, there is $a \models p$ in $P(M)$.
  \end{enumerate}
\end{dfn}

\begin{dfn}
  A set $A$ in $U^\fP$ is \emph{free} if $A \ind_{P(A)} P$.
  \\
  This means that $(A,P)$ is freely embedded in the universal domain,
  so it determines $\tp^\fP(A)$.
\end{dfn}

\begin{prp}
  \label{prp:lpair}
  Let $\kappa > |T|$.  Then a pair $(M,P)$ is a $\kappa$-saturated model of
  $T^\fP$ if and only if it is $\kappa$-lovely.
\end{prp}
\begin{proof}
  Let $(M,P)$ be $\kappa$-saturated, and we want to prove that it is
  $\kappa$-lovely:
  \begin{enumerate}
  \item Assume that $A \subseteq M$, $|A| < \kappa$, and $a$ is some element
    possibly outside $M$.
    As we are only interested in $\tp^T(a/A)$, we may assume that $a
    \ind_A M$.
    \\
    Set $D = aM$, $P(D) = D \cap \bdd(P(M))$.
    Then $(D,P)$ is a pair, and a free extension of $(M,P)$.
    By saturation, there is an element $a' \in M$ such that
    $\tp^\fP(a/A) = \tp^\fP(a'/A)$.
    Define $b = (aA)^c$, and $b' = (a'A)^c$, so:
    \begin{gather*}
      b,b' \in \bdd(P(D)) = \bdd(P(M)) \subseteq \bdd(M)
    \end{gather*}
    (In fact, since $M$ is $|T|^+$-saturated we have $\bdd(M) =
    \dcl(M)$, but this is not used here.)
    From $\tp^\fP(a/A) = \tp^\fP(a'/A)$ we obtain $\tp^T(aAb) =
    \tp^T(a'Ab')$.
    Then we have $a'A \ind_{b'} P$, but also:
    \begin{gather*}
      a \ind_A M \Longrightarrow a \ind_A b \Longrightarrow a' \ind_A b' \Longrightarrow a' \ind_A P,
    \end{gather*}
    as required.
  \item Assume that $A \subseteq M$, $|A| < \kappa$, and $a \ind_{P(A)} A$.  We
    may assume that $a \ind_{P(A)} M$ so $a \ind_{P(M)} M$.
    Let $D = aM$ as above, but define $P(D) = D \cap \bdd(aP(M))$.
    Then $(D,P)$ is a free extension of
    $(M,P)$, and $\tp^\fP(a/A)$ is realised in $M$.
  \end{enumerate}
  For the converse, assume that $(M,P)$ is $\kappa$-lovely.  Assume that
  $A \subseteq M$, $|A| < \kappa$ and $a$ is an element of some free extension
  $(N,P)$ of $(M,P)$.  Write $\mu = |A| + |T| < \kappa$.
  \\
  We may find $B \subseteq P(M)$ such that $|B| \leq \mu < \kappa$ and $A \ind_B
  P(M)$.  Replacing $A$ with $A \cup B$ we may assume that $A$ is free.
  Now find $C \subseteq P(N)$ such that $|C| \leq \mu < \kappa$ and $a \ind_{AC}
  P(N)$.  Since $A$ is free in $M$ it is also free in $N$, so $A
  \ind_{P(A)} C$ and therefore there is $C' \subseteq P(M)$ with $C' \equiv_A C$.
  Then there is $a' \in M$ such that $a'C' \equiv_A aC$ and $a' \ind_{AC'}
  P(M)$. Then $aCA$ and $a'C'A$ are both free sets with $aCA \equiv
  a'C'A$, whereby $aCA \equiv^\fP a'C'A$, so in particular $a \equiv^\fP_A a'$
  as required.
\end{proof}

\begin{cor}
  Every pair $(A,P)$ has a free extension to a $\kappa$-lovely pair.
\end{cor}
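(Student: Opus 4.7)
The plan is to invoke the cat structure of $T^\fP$ given by \prettyref{thm:TPcat}, together with \prettyref{prp:lpair} which characterises $\kappa$-lovely pairs as precisely the $\kappa$-saturated models of $T^\fP$. Since $T^\fP$ is a cat, it admits a universal domain $U^\fP$ (after passing to a completion if necessary, per the convention), and the whole argument will take place inside such a $U^\fP$.

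First, I would embed $(A,P)$ freely into $U^\fP$: the pair determines a complete $\fP$-type over $\emptyset$, and realising that type in $U^\fP$ produces an embedding $(A,P) \subseteq U^\fP$ that is by definition a $\fP$-morphism, i.e., a free embedding. Next, by the standard chain-of-length-$\kappa$ construction available in any cat, I would find a $\kappa$-saturated e.c.\ submodel $(M,P) \subseteq U^\fP$ containing $A$. Both inclusions $(A,P) \subseteq U^\fP$ and $(M,P) \subseteq U^\fP$ are free, so the Tarski-Vaught property of $\fP$ verified in the proof that $\fP$ is an abstract elementary category with amalgamation yields that the intermediate inclusion $(A,P) \subseteq (M,P)$ is free as well.

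Finally, by \prettyref{prp:lpair} the chosen $(M,P)$ is $\kappa$-lovely, and this gives the desired free extension of $(A,P)$.

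The only point requiring any care is the existence of a $\kappa$-saturated e.c.\ submodel of $U^\fP$ containing the prescribed small set $A$. This is standard cat material: one builds an elementary chain of length $\kappa$ inside $U^\fP$, realising at each stage all $\fP$-types over the previous stage whose parameter set has size $<\kappa$, using that $T^\fP$ has bounded type-spaces (as a consequence of \prettyref{thm:TPcat}). So the main obstacle is not any technical gap but simply assembling these already-established ingredients, and there is nothing genuinely new to prove beyond what the preceding sections have set up.
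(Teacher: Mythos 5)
Your proof is correct and follows essentially the same route as the paper's: the paper simply says ``embed $(A,P)$ freely in a sufficiently saturated model of $T^\fP$'' and invokes Proposition~\ref{prp:lpair}. You merely spell out the intermediate steps (passing to $U^\fP$, building a $\kappa$-saturated submodel, and closing the gap with the Tarski--Vaught property of $\fP$), none of which changes the substance.
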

\begin{proof}
  Just embed it freely in a sufficiently saturated model of $T^\fP$.
\end{proof}

\begin{rmk}
  Assume that $T$ is complete, and consider the language $\cL_P = \cL
  \cup \{P\}$.  Then every two $|T|^+$-lovely pairs are elementarily
  equivalent in this language, and any two free sets of cardinality
  $\leq |T|$ with the same $\cL_P$-diagram have the same type (this
  generalises results in \cite{poiz:paires,ppv:pairs}).
  \\
  Indeed, since two such sets have the same $\fP$-type, they
  correspond by an infinite back-and-forth in saturated structures.
  In particular, since the empty set is free, we have the elementary
  equivalence.
  \\
  However, this is just a special case of a more general observation:
  taking any two saturated models of a cat (or in fact, any two
  equi-universal homogeneous structures), and taking any relational
  language whose $n$-ary predicates are interpreted as subsets of
  $\tS_n$ (without any topological requirement), then they are
  elementarily equivalent in this language.  Of course, they have no
  reason to be saturated as models of their first-order theory, and
  when they are not, this first-order theory is rather meaningless.
\end{rmk}

\section{Independence in $T^\fP$}

\subsection{Simplicity}

We prove that $T^\fP$ is simple and characterise independence.

\begin{prp}
  \label{prp:indPeq}
  The following conditions are equivalent for (possibly infinite)
  tuples $a,b,c$ in $\fP$:
  \begin{enumerate}
  \item Whenever $(a_ib_ic_i : i < \omega) \models \mcl(abc)$, then $b_{<\omega}
    \ind_{a_{<\omega}} c_{<\omega}$.
  \item There exist $(a_ib_ic_i : i < \omega) \models \mcl(abc)$ such that $b_{<\omega}
    \ind_{a_{<\omega}} c_{<\omega}$.
  \item $(abc)^c \in \bdd((ab)^c,(ac)^c)$ and $\widehat{ab} \ind_{\hat a}
      \widehat{ac}$.
  \item \label{prp:indPeq:indPhat} $b \ind_{aP} c$ and $\widehat{ab}
    \ind_{\hat a} \widehat{ac}$.
  \item \label{prp:indPeq:indPc} $b \ind_{aP} c$ and $(ab)^c
    \ind_{a^c} (ac)^c$.
  \end{enumerate}
\end{prp}
\begin{proof}
  \begin{cycprf}
  \item[\impnext] Clear.
  \item[\impnext]
    We are given $(a_ib_ic_i : i < \omega) \models \mcl(abc)$ such that
    $b_{<\omega} \ind_{a_{<\omega}} c_{<\omega}$.
    This is a Morley sequence in $\tp(abc/(abc)^c)$, and we may assume
    that $abc = a_0b_0c_0$.
    In particular, the sequence $(a_i : 0 < i < \omega)$ is a Morley
    sequence over $\hat a$, indiscernible over $\widehat{ab}$, whereby
    $\widehat{ab} \ind_{\hat a} a_{<\omega}$, so:
    \begin{align*}
      b_{<\omega} \ind_{a_{<\omega}} c_{<\omega} & \Longrightarrow \widehat{ab}
      \ind_{a_{<\omega}} \widehat{ac} \Longrightarrow \widehat{ab}
      \ind_{\hat a} \widehat{ac}
    \end{align*}
    We also know that $ab \ind_{(ab)^c} a_{[1,\omega)}b_{[1,\omega)}$, and
    that $ac \ind_{(ac)^c} (abc)^c \Longrightarrow ac \ind_{(ab)^c(ac)^c} (abc)^c$.
    We obtain:
    \begin{align*}
      b_{<\omega} \ind_{a_{<\omega}} c_{<\omega} & \Longrightarrow b \ind_{a(ab)^c}
      a_{[1,\omega)}b_{[1,\omega)}c_{<\omega} \Longrightarrow b \ind_{a(ab)^c} c(abc)^c
      \\
      & \Longrightarrow abc \ind_{(ab)^c(ac)^c} (abc)^c
    \end{align*}
    Since $\tp(abc/(abc)^c)$ does not divide over $(ab)^c,(ac)^c$, we
    obtain $(abc)^c = \Cb(abc/(abc)^c) \in \bdd((ab)^c,(ac)^c)$.
  \item[\impnext]
    \begin{align*}
      \widehat{ab} \ind_{\hat a} \widehat{ac} & \Longrightarrow b
      \ind_{a(ab)^c(ac)^c} c \Longrightarrow b \ind_{a(abc)^c} c
    \end{align*}
  \item[\impnext]
    \begin{align*}
      \widehat{ab} \ind_{\hat a} \widehat{ac} & \Longrightarrow (ab)^c \ind_{\hat
        a} (ac)^c \Longrightarrow (ab)^c \ind_{a^c} (ac)^c
    \end{align*}
  \item[\impfirst]
    We know that $(a_ib_ic_i : i < \omega)$ is a Morley sequence over
    $(abc)^c$, so $a_ib_ic_i \ind_{(abc)^c} a_{\neq i}b_{\neq i}c_{\neq i}$ for
    all $i < \omega$.
    Then:
    \begin{align*}
      b \ind_{a(abc)^c} c & \Longrightarrow b \ind_{a(ab)^c} c(abc)^c
      \\
      a_ib_ic_i \ind_{(abc)^c} a_{\neq i}b_{\neq i}c_{\neq i} & \Longrightarrow b_i
      \ind_{a_i(ab)^c} a_{\neq i}b_{<i}c_{<\omega} \Longrightarrow b_i \ind_{a_{<\omega}(ab)^c} b_{<i}c_{<\omega}
    \end{align*}
    By induction on $i$ we obtain $b_{<i} \ind_{a_{<\omega}(ab)^c}
    c_{<\omega}$ for all $i$, whereby $b_{<\omega} \ind_{a_{<\omega}(ab)^c}
    c_{<\omega}$.
    Finally, $(ab)^c \ind_{a^c} (ac)^c$ gives us:
    \begin{align*}
      a_{<\omega}c_{<\omega} \ind_{(ac)^c} (ab)^c & \Longrightarrow a_{<\omega}c_{<\omega}
      \ind_{a^c} (ab)^c \Longrightarrow c_{<\omega} \ind_{a_{<\omega}} (ab)^c
      \\
      & \Longrightarrow b_{<\omega} \ind_{a_{<\omega}} c_{<\omega}
    \end{align*}
    As required.
  \end{cycprf}
\end{proof}

\begin{dfn}
  If any of the equivalent conditions in \prettyref{prp:indPeq} holds
  we say that $b \indP_a c$.
\end{dfn}

\begin{rmk}
  Conditions \ref{prp:indPeq:indPhat} and \ref{prp:indPeq:indPc} of
  \prettyref{prp:indPeq} were proposed independently, in some form or
  another, by all three authors of \cite{ppv:pairs} as candidates for
  independence in $T^\fP$ (in the case where $T^\fP$ is first order).
\end{rmk}

\begin{thm}
  \label{thm:TPsim}
  $T^\fP$ is simple, and $b \indP_a c$ if and only if $\tp^\fP(b/ac)$
  does not divide over $a$.
\end{thm}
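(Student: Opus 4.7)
The plan is to verify that $\indP$ satisfies the axioms of a well-behaved independence relation in a thick cat --- invariance, symmetry, monotonicity, transitivity, finite character, extension, local character, and the independence theorem over $\bdd^\fP$-closed sets --- and then invoke the abstract simplicity criterion established in \cite{pezz:fnctcat}. That criterion yields both simplicity of $T^\fP$ and the identification of $\indP$ with non-dividing in a single stroke.

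The easy axioms I would dispatch first, exploiting the multiple characterisations of \prettyref{prp:indPeq}. Invariance is immediate since $a^c = \Cb(a/P)$ is preserved by every $\fP$-automorphism. Symmetry is manifest from condition (iii), which treats $b$ and $c$ symmetrically. Monotonicity, transitivity, and the identity $a \indP_b a \Longleftrightarrow a \in \bdd^\fP(b)$ each reduce to the corresponding $T$-properties applied separately to the two conjuncts of condition (v), once one observes via \prettyref{lem:typchr} that the $\fP$-type of $a$ is determined by $\tp^T(\hat a)$. Finite character is best read off condition (i): any failure of $\indP$ is witnessed by $b_{<\omega} \ind_{a_{<\omega}} c_{<\omega}$ failing in $T$, hence on a finite subtuple; \prettyref{cor:mcleq} transfers this back to a finite failure of $\indP$.

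For extension I would use condition (v) as the target. Given $a,b,c$, first apply $T$-extension over $a^c$ to find $(ab')^c \equiv^T_{a^c} (ab)^c$ with $(ab')^c \ind_{a^c} (ac)^c$, then realise a suitable $b'$ with $\hat b' \equiv^T_{\hat a} \hat b$ and $\hat b' \ind_{\hat a} \widehat{ac}$; by \prettyref{lem:typchr} this preserves $\tp^\fP(b'/a)$. Second, working inside a sufficiently saturated pair, apply $T$-extension again to arrange $b' \ind_{a(ab')^c} P$ without disturbing $\widehat{ab'ac}$, yielding $b' \ind_{aP} c$ by transitivity, hence $b' \indP_a c$. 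Local character proceeds analogously by combining local character in $T$ for each of the two conjuncts of (v) over independently chosen small bases.

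The main obstacle is the independence theorem over $\bdd^\fP$-closed sets. Given $a = \bdd^\fP(a)$ with $b_0 \equiv^\fP_a b_1$, $c_0 \indP_a c_1$, and $b_i \indP_a c_i$, one must produce $b$ with $b \equiv^\fP_{ac_i} b_i$ and $b \indP_a c_0c_1$. The plan is a two-stage amalgamation guided by condition (v). The canonical-base stage --- amalgamating $(ab_0)^c$ and $(ab_1)^c$ into some $(ab)^c \ind_{a^c}(ac_0c_1)^c$ --- is a direct application of the $T$-independence theorem over $\bdd^T(a^c)$. The delicate point is that $\bdd^\fP(a)$ is in general strictly richer than $\bdd^T(\hat a)$, so one must verify that the hypothesis $b_0 \equiv^\fP_a b_1$ delivers the appropriate $T$-conjugacy at the base; this is precisely what \prettyref{lem:typchr} together with the concurrent-indiscernibility machinery of \prettyref{cor:mcleq} provides, since amalgamated Morley sequences can be aligned using thickness of $T$. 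The $\ind_{aP}$ stage is then a second, essentially independent application of the $T$-independence theorem over $\bdd^T(aP)$. Once both amalgamations are assembled into a single $\fP$-configuration, the abstract criterion closes the proof.
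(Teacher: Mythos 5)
Your overall strategy (verify the axioms of an independence relation and invoke the abstract simplicity criterion) is the same as the paper's, and your treatment of the easy axioms via the characterisations of \prettyref{prp:indPeq} matches what the paper summarises in one sentence. Your identification of the independence theorem as the sole hard axiom is also right.

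However, your plan to prove the independence theorem \emph{over $\bdd^\fP$-closed sets} has a genuine circularity, and your attempt to patch it does not work. You acknowledge that you must translate the hypothesis $b_0 \equiv^\fP_a b_1$ with $a = \bdd^\fP(a)$ into the correct $T$-conjugacy of the canonical bases, and you claim that \prettyref{lem:typchr} and \prettyref{cor:mcleq} deliver this. But those lemmas only describe $\fP$-types of \emph{real} tuples; they say nothing about $T^\fP$-hyperimaginaries. Since $a = \bdd^\fP(a)$ a priori contains hyperimaginaries of $T^\fP$ that are not inherited from $T$, the translation you need is precisely the statement that $\bdd^\fP(a) = \dcl^\fP(\bdd^T(\hat a))$ --- and, as the paper remarks explicitly, the only known route to that identity is \emph{through} the independence theorem, not before it. The fix is to prove the independence theorem over Lascar strong types instead: the hypothesis $\lstp^\fP(b_0/a) = \lstp^\fP(b_1/a)$ (for an arbitrary set $a$) directly yields $\widehat{ab_0} \lseq[\hat a] \widehat{ab_1}$, which is exactly the $T$-level input needed for the two-stage amalgamation you sketch (amalgamate the canonical bases over $\hat a$ by the $T$-independence theorem, then arrange independence from $P$). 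With that change your argument aligns with the paper's, and the identity $\bdd^\fP(a) = \dcl^\fP(\bdd(\hat a))$ then falls out as a corollary rather than a prerequisite.
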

\begin{proof}
  We need to prove that $\indP$ is an independence relation.
  \\
  By \prettyref{prp:indPeq}, if $abc \in U^\fP$ and $(a_ib_ic_i : i <
  \omega) \models \mcl(abc)$, then $b \indP_a c \Longleftrightarrow b_{<\omega} \ind_{a_{<\omega}}
  c_{<\omega}$.
  This gives immediate proofs for all the properties of an
  independence relation, with the exception of the independence
  theorem for Lascar strong types, which we treat separately.
  \\
  So assume that $\lstp^\fP(b_0/a) = \lstp^\fP(b_1/a)$, $c_0 \indP_a
  c_1$, and $c_i \indP_a b_i$ for $i < 2$.
  Then in particular $\widehat{ab_0} \lseq[\hat a] \widehat{ab_1}$,
  and we also have $\widehat{ab_i} \ind_{\hat a} \widehat{ac_i}$ for
  $i < 2$ and $\widehat{ac_0} \ind_{\hat a} \widehat{ac_1}$.
  By the independence theorem in $T$ we can find $b,d \ind_{\hat a}
  \widehat{ac_0}\widehat{ac_1}$ such that $b,d \equiv_{\widehat{ac_i}}
  b_i,(ab_i)^c$.
  \\
  As $(ac_0c_1)^c \in \bdd((ac_0)^c(ac_1)^c)$ we have in fact $b,d \ind_{\hat a}
  \widehat{ac_0c_1}$, so $d \ind_{a^c} \widehat{ac_0c_1} \Longrightarrow d
  \ind_{(ac_0c_1)^c} ac_0c_1$.
  We may therefore realise $d$ in $P$, and then realise $b$ in $U^\fP$
  such that $b \ind_{\widehat{ac_0c_1},d} P \Longrightarrow b \ind_{ad} c_0c_1P$.
  In particular, $ab \ind_d P$ (since $d \in P$ and $a^c \in \dcl(d)$),
  and $d = (ab)^c$.
  \\
  For $i < 2$, we get $b \ind_{a(ab)^c} c_iP \Longrightarrow abc_i \ind_{(ab)^c(ac_i)^c}
  P$.
  Recall that $(ab_ic_i)^c \in \bdd((ab_i)^c(ac_i)^c)$, so $ab_ic_i
  \ind_{(ab_i)^c(ac_i)^c} P$ as well.
  This along with $\widehat{ab} \equiv_{\widehat{ac_i}} \widehat{ab_i}$
  yields  $b \equiv^\fP_{ac_i} b_i$.
  \\
  We know that $\widehat{ab} \ind_{\hat a} \widehat{ac_0c_1}$, and
  $b \ind_{a(ab)^c} c_0c_1P \Longrightarrow b \ind_{aP} c_0c_1$, so $b \indP_a
  c_0c_1$, as required.
\end{proof}

Notice that in the proof of the independence theorem, we used the
assumption $\lstp^\fP(b_0/a) = \lstp^\fP(b_1/a)$ only to conclude that
$\widehat{ab_0} \lseq[\hat a] \widehat{ab_1}$.
This implies that:

\begin{cor}
  For every $a \in U^\fP$, $\bdd^\fP(a)$ is $\fP$-interdefinable with
  $\bdd(\hat a)$.
  \\
  This still holds even if we consider hyperimaginary sorts of $T^\fP$
  that are not inherited from $T$.
\end{cor}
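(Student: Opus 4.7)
My plan is to deduce the corollary from the strengthened form of the independence theorem highlighted in the remark by identifying $\fP$-Lascar strong types over $a$ with $T$-Lascar strong types of hats over $\hat a$.

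First I would show that $a$ and $\hat a$ are $\fP$-interdefinable. The inclusion $a \in \dcl^\fP(\hat a)$ is trivial, and $a^c = \Cb(a/P) \in \dcl^\fP(a)$ because any $\fP$-automorphism of $U^\fP$ fixing $a$ is $T$-elementary and preserves $P$ setwise, hence fixes the $T$-canonical base of $a$ over $P$. Consequently $\bdd^\fP(a) = \bdd^\fP(\hat a)$, and the inclusion $\bdd(\hat a) \subseteq \bdd^\fP(\hat a)$ is immediate (an $\fP$-automorphism fixing $\hat a$ is $T$-elementary and fixes $\hat a$, so has bounded orbit on $\bdd(\hat a)$).

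For the other direction, I introduce on every sort of $T^\fP$ the equivalence relation
\[
E_a(b_0, b_1) \iff \widehat{ab_0} \lseq[\hat a] \widehat{ab_1}.
\]
The same argument as above, applied to $ab$ in place of $a$, shows $(ab)^c \in \dcl^\fP(ab)$, so the hat operation $b \mapsto \widehat{ab}$ is $\fP$-definable on $T$-sorts and extends naturally to $T^\fP$-hyperimaginary sorts via any $T$-tuple representative. Thickness of $T$ makes $\lseq[\hat a]$ $T$-type-definable, so $E_a$ is a bounded, $\fP$-invariant, $\fP$-type-definable equivalence relation, and the remark says precisely that $E_a$ satisfies the independence theorem over $a$ in $\fP$. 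I then claim $E_a = \lstp^\fP(\cdot/a)$. The direction $\lstp^\fP \Rightarrow E_a$ is automatic since $\lstp^\fP(\cdot/a)$ is the finest bounded $\fP$-invariant equivalence relation over $a$. For the converse, suppose $E_a(b_0,b_1)$ and pick $e \in \bdd^\fP(a)$. Then $e \indP_a e$ and $e \indP_a b_i$ for $i<2$, so applying the strengthened independence theorem with $c_0 = c_1 := e$ produces $b$ with $b \equiv^\fP_{ae} b_0$ and $b \equiv^\fP_{ae} b_1$; hence $b_0 \equiv^\fP_{ae} b_1$. Since $e$ was arbitrary in $\bdd^\fP(a)$ and $\lstp^\fP(\cdot/a)$ coincides with $\equiv^\fP_{\bdd^\fP(a)}$ in the simple thick cat $T^\fP$, we conclude $\lstp^\fP(b_0/a) = \lstp^\fP(b_1/a)$.

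To finish, I would use that in a simple cat $\bdd(A)$ is $\fP$-interdefinable with the collection of canonical parameters of $\lstp(\cdot/A)$-classes: applying this in $T^\fP$ over $a$ and in $T$ over $\hat a$, the identity $E_a = \lstp^\fP(\cdot/a)$ shows that the hat operation bijects these two families of canonical parameters up to $\fP$-interdefinability, yielding the desired $\fP$-interdefinability of $\bdd^\fP(a)$ with $\bdd(\hat a)$, valid even on $T^\fP$-hyperimaginary sorts not inherited from $T$ (since any such sort is represented by $T$-tuples to which the hat operation applies). The main obstacle is the reverse inclusion $E_a \subseteq \lstp^\fP(\cdot/a)$: it rests on the diagonal choice $c_0 = c_1 = e$ in the strengthened independence theorem, which one must check is legitimate for $e$ a $\fP$-hyperimaginary in $\bdd^\fP(a)$, together with the characterization $\lstp^\fP = \equiv^\fP_{\bdd^\fP(\cdot)}$ for which one appeals to the simple thick cat machinery of \cite{pezz:fnctcat}.
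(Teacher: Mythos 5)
Your core idea---leaning on the strengthened independence theorem singled out in the preceding remark---matches the paper's. But the execution has a genuine gap which you partly flag yourself, and the paper's proof sidesteps it by a cleaner packaging.

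The paper never needs to apply either the hat operation or the independence theorem to a $T^\fP$-hyperimaginary in a new sort. It observes that $\tp^\fP(b/\bdd(\hat a))$ is an amalgamation base (this uses the strengthened IT for \emph{real} tuples only, since $b_0 \equiv^\fP_{\bdd(\hat a)} b_1$ gives $\widehat{ab_0} \lseq[\hat a] \widehat{ab_1}$), hence is equivalent to $\tp^\fP(b/\bdd^\fP(a))$; then for any hyperimaginary $b_E \in \bdd^\fP(a)$ with real representative $b$, the chain $\tp^\fP(b/\bdd(\hat a)) \vdash \tp^\fP(b/\bdd^\fP(a)) \vdash \tp^\fP(b/b_E)$ forces every automorphism fixing $\bdd(\hat a)$ to fix $b_E$. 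The hyperimaginary appears only at the very end, as a derived object of $b$.

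Your proof instead (a) claims the hat operation $b \mapsto \widehat{ab}$ extends to $T^\fP$-hyperimaginary sorts ``via any $T$-tuple representative.'' This is not well-defined: $(ab)^c = \Cb^T(ab/P)$ only makes sense for $b$ in a sort inherited from $T$, and two $E$-equivalent representatives $b, b'$ need not satisfy $\widehat{ab} \lseq[\hat a] \widehat{ab'}$, so $E_a$ does not descend to those sorts. It also (b) applies the strengthened IT with $c_0 = c_1 = e$ for $e$ ranging over all of $\bdd^\fP(a)$, including new hyperimaginary sorts; but the proof of Theorem~\ref{thm:TPsim} runs through $\widehat{ac_i}$, which again presupposes $c_i$ lies in a $T$-sort. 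You identify (b) as the main obstacle and correctly observe that it ``must be checked,'' but do not resolve it; the canonical-parameter argument in your closing paragraph also rests on (a). To repair the argument one should follow the paper: prove $\tp^\fP(b/\bdd(\hat a))$ is an amalgamation base by checking it only on real tuples, invoke the standard fact that an amalgamation base over $A$ is equivalent to the type over $\bdd(A)$, and only then extract $b_E$ from $b$.
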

\begin{proof}
  One inclusion is clear.
  For the other, let $b_E \in \bdd^\fP(a)$ be a hyperimaginary.
  We saw that $\tp^\fP(b/\bdd(\hat a))$ is an amalgamation base, so it
  is equivalent to $\tp^\fP(b/\bdd^\fP(a))$ and therefore implies
  $\tp^\fP(b/b_E)$.
  Then every automorphism of $U^\fP$ that fixes $\bdd(\hat a)$ sends
  $b$ to another realisation of $\tp^\fP(b/b_E)$, and therefore fixes
  $b_E$, so $b_E \in \dcl^\fP(\bdd(\hat a))$.
\end{proof}

\begin{cor}
  \label{cor:TPssim}
  If $T$ is supersimple, then so is $T^\fP$.
\end{cor}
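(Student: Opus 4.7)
The plan is to leverage the independence characterisation of Theorem \ref{thm:TPsim} to reduce supersimplicity of $T^\fP$ to that of $T$. Since $T^\fP$ is already known to be simple, I need only establish local character at finite sets: for every tuple $b$ and set $A$ in $U^\fP$, there exists a finite $A_0 \subseteq A$ with $b \indP_{A_0} A$. By Proposition \ref{prp:indPeq}(iv), this amounts to finding a finite $A_0 \subseteq A$ for which both $b \ind_{A_0 P} A$ and $\widehat{A_0 b} \ind_{\hat{A_0}} \widehat{A_0 A}$ hold in $T$.

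For the first condition, I would invoke $T$-supersimplicity on $\tp^T(b/A \cup P(U^\fP))$ to obtain a finite $F \subseteq A \cup P(U^\fP)$ over which it does not $T$-fork. Writing $F = A_{0,1} \cup P_0$ with $A_{0,1} \subseteq A$ and $P_0 \subseteq P(U^\fP)$ both finite, monotonicity in the base (valid in any simple theory) allows enlarging the base from $A_{0,1} P_0$ up to $A_{0,1} \cup P(U^\fP)$ without losing independence, yielding $b \ind_{A_{0,1} P} A$. For the second condition, I would apply $T$-supersimplicity to the $T$-type of $\widehat{A b}$ over $\hat A$, obtaining a finite $E \subseteq \hat A$ over which it does not $T$-fork; each element of $E$ is either a member of $A$ or else of the form $\Cb(a'/P)$ for some finite $a' \in A^{<\omega}$, and gathering these finitely many $a'$'s into a finite $A_{0,2} \subseteq A$ arranges $E \subseteq \hat{A_{0,2}}$, whence $\widehat{Ab} \ind_{\hat{A_{0,2}}} \hat A$ in $T$. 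Using that the canonical base of a sub-tuple lies in $\bdd$ of the canonical base of a super-tuple, the hyperimaginary $(A_{0,2} b)^c$ is controlled by $(Ab)^c$, and one extracts from this the desired $\widehat{A_{0,2} b} \ind_{\hat{A_{0,2}}} \widehat{A_{0,2} A}$.

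Setting $A_0 = A_{0,1} \cup A_{0,2}$ and combining both halves via monotonicity then yields $b \indP_{A_0} A$, completing the reduction. The main technical obstacle I anticipate is the hyperimaginary bookkeeping for $\hat{A_0}$ and $(A_0 b)^c$ as $A_0$ varies: one must verify that shrinking from $A$ down to a finite $A_0$ keeps the canonical-base data under $\bdd$-control, which relies on the standard fact that $\Cb$ of a restriction is bounded over $\Cb$ of the enveloping tuple. Everything else is either an invocation of $T$-supersimplicity or monotonicity in the base inside a simple theory.
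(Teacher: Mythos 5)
Your reduction to condition (iv) of \prettyref{prp:indPeq} is a reasonable starting point, and the first half of the argument (producing $A_{0,1}$ with $b \ind_{A_{0,1}P} A$) is fine: supersimplicity applies to the singleton $b$ over the set of real elements $A \cup P$, and monotonicity in the base does the rest. The second half, however, has a genuine gap that is not merely ``bookkeeping.''

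You invoke $T$-supersimplicity on $\tp^T(\widehat{Ab}/\hat A)$ to extract a finite $E \subseteq \hat A$. But $\widehat{Ab} = Ab,(Ab)^c$ is an infinite tuple whenever $A$ is infinite, and supersimplicity gives local character at finite sets only for \emph{finite} tuples --- it is false in general that an infinite tuple's type over a set fails to fork over some finite subset (even in $\omega$-stable theories). One can try to repair this by observing that modulo $\hat A$ the tuple $\widehat{Ab}$ is interbounded with the finite-plus-hyperimaginary tuple $b, b^c, (Ab)^c$; but now you are applying supersimplicity to a hyperimaginary tuple over a parameter set that itself contains the hyperimaginary $A^c$, which is a nontrivial extension of the definition whose validity in thick cats you would have to establish separately. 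Relatedly, your decomposition ``each element of $E$ is either a member of $A$ or of the form $\Cb(a'/P)$'' misdescribes $\hat A$: by definition $\hat A = A, A^c$ has a single hyperimaginary coordinate $A^c$, and if $A^c \in E$ then $E \not\subseteq \bdd(\hat{A_{0,2}})$ for any finite $A_{0,2}$, so the ``gathering'' step breaks. Finally, the combining step that sets $A_0 = A_{0,1} \cup A_{0,2}$ silently assumes each half of condition (iv) is separately monotone upward in $A_0$; the first half is, but the second ($\widehat{A_0 b} \ind_{\hat{A_0}} \hat A$) is not obviously so, since enlarging $A_0$ simultaneously enlarges both the left-hand tuple (via $(A_0 b)^c$) and the base (via $A_0^c$), and only the conjunction of the two halves is known to be monotone.

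The paper's proof avoids all of these pitfalls by working with the Morley-sequence characterisation (condition (ii) of \prettyref{prp:indPeq}) rather than (iv). One takes $(a_j, B_j : j < 2\omega) \models \mcl(a,B)$, applies $T$-supersimplicity to the \emph{real singleton} $a_\omega$ over the set of real elements $a_{<\omega}B_{<2\omega}$ to get finite $n$ and $I$, and then by a segment-removal and induction argument along the $2\omega$-sequence deduces $a_{<\omega} \ind_{b^{\in I}_{<\omega}} B_{<\omega}$, i.e.\ $a \indP_{b^{\in I}} B$. This keeps the entire appeal to supersimplicity at the level of a real singleton over real parameters, produces a single finite $I$ witnessing the full independence in one stroke, and never has to track how $(A_0 b)^c$ or $\hat{A_0}$ vary with $A_0$.
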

\begin{proof}
  Let $a$ be a singleton and $B = \{b^i : i < \alpha\}$ a set in $U^\fP$.
  Let $(a_j,B_j : j < \omega) \models \mcl(a,B)$ in $U$, and extend
  this   to a similar $2\omega$-sequence $(a_j,B_j : j < 2\omega)$.
  By supersimplicity, there are $n < \omega$ and $I \subseteq \alpha$ finite such
  that $a_\omega \ind_{a_{<n},b^{\in I}_{\in[0,n)\cup[\omega,2\omega)}}
  a_{<\omega}B_{<2\omega}$.
  \\
  Then for every $m \in [n,\omega)$ we have $a_\omega \ind_{a_{<m},b^{\in I}_{[0,m)\cup[\omega,2\omega)}}
  B_{[0,m)\cup[\omega,2\omega)}$, and by removing the segment $[m,\omega)$ we obtain
  $a_m \ind_{a_{<m},b^{\in I}_{<\omega}} B_{<\omega}$.
  On the other hand, increasing $I$ somewhat, though keeping it
  finite, we may also   assume that $a_{<n} \ind_{b^{\in I}_{<\omega}}
  B_{<\omega}$.
  Combined with the previous observations, an easy induction gives
  $a_{<m} \ind_{b^{\in I}_{<\omega}} B_{<\omega}$ for every $m \in [n,\omega)$,
  whereby $a_{<\omega} \ind_{b^{\in I}_{<\omega}} B_{<\omega}$.
  \\
  We conclude that $a \indP_{b^{\in I}} B$, with $|I| < \omega$, as required.
\end{proof}

\begin{rmk}
  The approach we take here for the proof of simplicity and the
  characterisation of independence in $T^\fP$ is completely different
  than that which appears in \cite{ppv:pairs}.
  The basic improvement is in the equivalence $b
  \indP_a c \Longleftrightarrow b_{<\omega} \ind_{a_{<\omega}} c_{<\omega}$ which does not appear
  there.
  Given this equivalence, all that is left to show is the independence
  theorem, which then gives us at once the simplicity of $T^\fP$, the
  characterisation of dividing, and the characterisation of Lascar
  strong types.
  \\
  In fact, not knowing what hyperimaginary sorts in $T^\fP$ look like,
  the only way we know how to prove that $\bdd^\fP(a) =
  \dcl^\fP(\bdd(\hat a))$ is through the independence theorem, so
  might just as well obtain the other results at the same time.
\end{rmk}

\textbf{Added in proof:}
Recent results suggest that the ``obvious'' definition of
supersimplicity is too strong for general cats (more precisely, for
those where the property $x \neq y$ is not positive).
A better (and more permissive) definition appears in \cite{pezz:morley}
for Hausdorff cats.
The analogue \prettyref{cor:TPssim} for this definition is true,
although it does not seem possible to prove it solely with
the tools introduced in \cite{pezz:morley}.

\subsection{Stability}

We recall:

\begin{dfn}
  \begin{enumerate}
  \item $T$ is \emph{$\lambda$-stable} if $|\tS_n(A)| \leq \lambda$ for every set
    $|A| \leq \lambda$.
  \item $T$ is \emph{stable} if it is $\lambda$-stable for some $\lambda$.
  \item $T$ is \emph{superstable} if it is $\lambda$-stable for every $\lambda \geq
    2^{|T|}$.
  \end{enumerate}
\end{dfn}

One can prove along the lines of the classical proof:

\begin{fct}
  \label{fct:stab}
  Let $T$ be any cat.
  \begin{enumerate}
  \item $T$ is stable if and only if $T$ is $\lambda^{|T|}$-stable for
    every $\lambda$.
  \item $T$ is superstable if and only if it is stable and supersimple.
  \end{enumerate}
\end{fct}

\begin{thm}
  If $T$ is stable or superstable, then so is $T^\fP$.
\end{thm}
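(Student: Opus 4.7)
The plan is to reduce the superstable case to the stable one via \prettyref{fct:stab}(ii): that fact says ``superstable = stable + supersimple'', and supersimplicity transfers from $T$ to $T^\fP$ by \prettyref{cor:TPssim}; so once I know $T$ stable implies $T^\fP$ stable, the same fact applied to $T^\fP$ gives the superstable case. I therefore concentrate on stability.

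For stability, I invoke \prettyref{fct:stab}(i): it suffices to prove $|S^\fP_n(A)|\le\mu$ whenever $n<\omega$, $\mu^{|T|}=\mu$ and $|A|\le\mu$. Fix such $A$, $\mu$, $n$. By \prettyref{lem:typchr} applied to the enumerated tuple $(a,A)$, the map
\[
\Phi\colon \tp^\fP(a/A)\;\longmapsto\;\tp^T\bigl(a,(aA)^c/A\bigr),\qquad (aA)^c=\Cb(aA/P),
\]
is well-defined and injective: if $\tp^T(a,(aA)^c/A)=\tp^T(a',(a'A)^c/A)$, the witnessing $T$-automorphism fixes $A$ pointwise and identifies the hat-enlargements of the tuples $(a,A)$ and $(a',A)$, whence the lemma yields $\tp^\fP(aA)=\tp^\fP(a'A)$, and therefore $\tp^\fP(a/A)=\tp^\fP(a'/A)$.

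The main obstacle is counting the image of $\Phi$. A naive bound fails because $(aA)^c$ has a priori size $|A|+|T|\sim\mu$, giving a hopeless $2^\mu$ estimate. The fix is to factor $(aA)^c$ through a small object. Set $h=\Cb(a/AP)$; since $a$ is finite, $|h|\le|T|$. From $a\ind_h AP$ I get $a\ind_{Ah}P$, and then $aA\ind_{Ah}P$ (since $A\subseteq Ah$), so $(aA)^c\in\bdd(Ah)$. Hence each element of the image is determined by the $T$-type $\tp^T(a,h/A)$ up to boundedly many (at most $2^{|T|}$) conjugates of the hyperimaginary $(aA)^c$ over $Ah$. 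By $\mu$-stability of $T$ there are at most $\mu$ such $T$-types $\tp^T(a,h/A)$, and $\mu\cdot 2^{|T|}=\mu$ since $\mu^{|T|}=\mu$. Hence $|S^\fP_n(A)|\le\mu$, as required.
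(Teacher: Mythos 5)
Your reduction of the superstable case to ``stable $+$ supersimple'' via \prettyref{fct:stab} and \prettyref{cor:TPssim} matches the paper, and the map $\Phi$ is indeed well-defined and injective: equal values of $\Phi$ give a $T$-automorphism over $A$ identifying the hat-tuples $\widehat{aA}$ and $\widehat{a'A}$, so \prettyref{lem:typchr} applies. The counting step has a genuine gap, however. You deduce $(aA)^c=\Cb(aA/P)\in\bdd(Ah)$ from $aA\ind_{Ah}P$, but the fact ``$b\ind_C D$ implies $\Cb(b/D)\in\bdd(C)$'' is only valid when $C\subseteq\bdd(D)$: canonical bases live in $\bdd(D)$, and the minimality characterisation only compares against subsets of $\bdd(D)$. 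Here $Ah\not\subseteq\bdd(P)$, since $A$ is in general not bounded over $P$, and the conclusion is false. Concretely, let $T$ be the theory of infinite $\mathbb{F}_2$-vector spaces, $P$ a subspace, $A=\{a_0\}$ with $a_0=p_0+v_0$ (projection $p_0\in P$, $v_0$ generic over $P$), and let $a$ be generic over $AP$. Then $h$ is bounded and $Ah$ is interbounded with $a_0$, while $(aA)^c$ is interbounded with $\Cb(a_0/P)=p_0$, which is not in $\bdd(a_0)$ --- one can move $P$, and hence $p_0$, while fixing $a_0$. So the fibres of your factorisation through $\tp^T(a,h/A)$ are unbounded, and the estimate $|S^\fP_n(A)|\le\mu\cdot 2^{|T|}$ does not follow.

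The paper's proof avoids exactly this issue by not trying to compress $(aA)^c$ into something small over $A$ alone. It fixes once and for all a Morley sequence $(A_i:i<\omega)\models\mcl(A)$, a single parameter set of size $|A|+|T|$, and recovers each $\tp^\fP(a/A)$ from $\tp^T(a_{<\omega}/A_{<\omega})$ where $(a_iA_i:i<\omega)\models\mcl(aA)$ fattens the fixed sequence; the Morley sequence carries $(aA)^c$ in its definable closure, so \prettyref{lem:typchr} applies, and stability of $T$ bounds $|S^T_\lambda(A_{<\omega})|$ directly. To repair your argument you would need to throw $A^c$ (or equivalently $A_{<\omega}$) into the base before bounding $(aA)^c$, which essentially leads back to the paper's device.
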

\begin{proof}
  Assume that $T$ is stable, and count $\fP$-types over a set $A$.
  Fix a sequence $(A_i : i < \omega) \models \mcl(A)$: for every
  $a$, $\tp^\fP(a/A)$ is determined by $\tp(a_{<\omega}/A_{<\omega})$, for any
  $a_{<\omega}$ such that $(a_iA_i : i < \omega) \models \mcl(aA)$ (and such
  $a_{<\omega}$ always exists).
  By stability: $|S^\fP_\lambda(A)| \leq |S^T_{\lambda + \omega}(A_{<\omega})| \leq (|A| +
  \omega)^{\lambda + |T|}$, so $|A| = \mu^{|T|} \Longrightarrow |S^\fP_{|T|}(A)| = |A|$.
  \\
  The result of superstable follows from \prettyref{fct:stab} and
  \prettyref{cor:TPssim}.
\end{proof}

\subsection{One-basedness}

We recall:

\begin{dfn}
  A simple cat $T$ (not necessarily thick) is \emph{one-based}
  if whenever $(a_i : i < \omega)$ is a Morley sequence in a complete
  Lascar strong type $p$ then $\Cb(p) \in \bdd(a_i)$ for some (every)
  $i$.
\end{dfn}

\begin{lem}
  A cat $T$ is one-based if and only if, whenever $(a_i : i < \omega)$ is
  an indiscernible sequence, then $(a_i : 0 < i < \omega)$ is independent
  over $a_0$.
\end{lem}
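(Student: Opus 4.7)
The plan is to deduce both directions from the standard fact (valid in any thick simple cat, by the simplicity machinery of \cite{pezz:catsim,pezz:fnctcat}) that every indiscernible sequence $(a_i : i < \omega)$ is a Morley sequence in $\lstp(a_0/C)$, where $C$ is the canonical base of the sequence; concretely, after extending by one further term $a_\omega$, one may take $C = \Cb(a_\omega/a_{<\omega})$.

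For the $(\Rightarrow)$ direction, assume $T$ is one-based and $(a_i : i < \omega)$ is indiscernible. Forming $C$ as above, one-basedness applied to the Morley sequence $(a_i)_{i<\omega}$ in $\lstp(a_0/C)$ yields $C \subseteq \bdd(a_0)$, and the Morley property gives $a_i \ind_C a_{<i}$ for every $i \geq 1$. Since $\bdd(C a_0) = \bdd(a_0)$, this reads $a_i \ind_{a_0} a_{[1,i)}$, which is exactly the desired independence of $(a_i : 0 < i < \omega)$ over $a_0$.

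For the converse, let $(a_i : i < \omega)$ be a Morley sequence in a complete Lascar strong type $p$, set $C = \Cb(p)$, and aim at $C \in \bdd(a_0)$. Extend to $(a_i : i \leq \omega)$. For each finite $n$, the hypothesis applied to $(a_i : i < \omega)$ gives $a_n \ind_{a_0} a_{[1,n)}$; since $(a_0,\ldots,a_{n-1},a_n)$ and $(a_0,\ldots,a_{n-1},a_\omega)$ are order-indexed sub-tuples of the same indiscernible sequence they share a Lascar strong type, so also $a_\omega \ind_{a_0} a_{[1,n)}$. Finite character then yields $a_\omega \ind_{a_0} a_{[1,\omega)}$, whence $\Cb(a_\omega / a_0 a_{[1,\omega)}) \subseteq \bdd(a_0)$. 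On the other hand, $(a_i : 1 \leq i < \omega)$ is itself a Morley sequence in $p$, so $C = \Cb(a_\omega/a_{[1,\omega)})$; combining the Morley property $a_\omega \ind_C a_{<\omega}$ with $C \in \bdd(a_{[1,\omega)})$ gives $a_\omega \ind_{a_{[1,\omega)}} a_0$, and therefore the canonical bases $\Cb(a_\omega/a_{[1,\omega)})$ and $\Cb(a_\omega/a_0 a_{[1,\omega)})$ coincide. Thus $C \in \bdd(a_0)$.

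The only mild technical point is keeping the three canonical bases $\Cb(a_\omega/a_{<\omega})$, $\Cb(a_\omega/a_{[1,\omega)})$ and $\Cb(a_\omega/a_0 a_{[1,\omega)})$ straight and verifying that they coincide under the relevant independence hypotheses; each such identification is a one-line consequence of the invariance of canonical bases under $\ind$-independent extension of the base. No serious obstacle is anticipated — the argument is essentially symbol-pushing within the simplicity calculus of \cite{pezz:catsim,pezz:fnctcat}.
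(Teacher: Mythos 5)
Your proof is correct and follows essentially the same route as the paper: realize the indiscernible sequence as a Morley sequence over its canonical base $C$, then use invariance of $\Cb$ under $\ind$-independent base extensions to pass between $C$ and $\bdd(a_0)$ in both directions. The paper's version of the converse is slightly shorter because it works directly with $a_1$ and the tail $a_{\geq 2}$ (so $a_1\ind_{a_0}a_{\geq 2}$ falls straight out of the hypothesis, and $\Cb(a_1/ca_0)=c$ comes from $a_1\ind_c a_0$), whereas you route through $a_\omega$ and need the additional type-swap step $a_n\mapsto a_\omega$; but the core argument is the same.
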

\begin{proof}
  Remember that every indiscernible sequence is a Morley sequence over
  some set $A$: for example, a copy of the sequence.
  Setting $c = \Cb(a_i/A)$, $(a_i)$ is a Morley sequence over $c$.
  \\
  If $T$ is one based, then we have $c \in \bdd(a_0)$, so $a_i \ind_c
  a_{<i} \Longrightarrow a_i \ind_{a_0} a_{[1,i-1]}$ for every $i$.  Conversely, if
  $(a_i)$ is a Morley sequence in some Lascar strong type $p$ and $c =
  \Cb(p)$, then $c \in \dcl(a_{\geq2})$ so $a_1 \ind_c a_0 \Longrightarrow c =
  \Cb(a_1/ca_0)$ and $a_1 \ind_{a_0} a_{\geq2} \Longrightarrow a_1 \ind_{a_0} c \Longrightarrow c
  \in \bdd(a_0)$.
\end{proof}

\begin{prp}
  If $T$ is one-based then so is $T^\fP$.
\end{prp}
\begin{proof}
  Let $(a^j : j < \omega)$ be an indiscernible sequence in $U^\fP$.
  Extend $(a^j : j < \omega)$ to a very long $(a^j : j < \lambda)$, and take
  $(a_i^{<\lambda} : i < \omega) \models \mcl(a^{<\lambda})$.
  Considering it rather as a long sequence $(a_{<\omega}^j : j < \lambda)$, we
  may extract an indiscernible sequence, which shows that there are
  $(a_i^{<\omega} : i < \omega) \models \mcl(a^{<\omega})$ such that $(a_{<\omega}^j : j <
  \omega)$ is indiscernible.
  Since $T$ is one-based, the sequence $(a_{<\omega}^j : 0 < j < \omega)$ is a
  Morley sequence over $a_{<\omega}^0$, whereby $(a^j : 0 < j < \omega)$ is a
  Morley sequence over $a^0$.
\end{proof}

\section{The description of $T^\fP$ in $T$ and its functoriality}

In the first section we constructed the abstract elementary category
$\fP$, defined the language $\Delta^\fP$, and proved that $T^\fP =
\Th_{\Pi^\fP}(\fP)$ is a positive Robinson theory in $\Delta^\fP$, with
$\tS(T^\fP) = \tS(\fP)$.
In the topology on $\tS(T^\fP)$, closed sets are those defined by
partial $\Delta^\fP$-types, and equipped with this topology it is a
compact type-space functor.
However, this topology could have been obtained more directly, using
the categoric point of view described in \cite{pezz:fnctcat}.

Recall that we defined $\tS^{ind}_\alpha(T)$ as the subset of
$\tS_{\alpha\times\omega}(T)$ which consists of types of indiscernible sequence of
$\alpha$-tuples.
If $f : \alpha \to \beta$ is a map, and $f_{\times\omega} : \alpha \times \omega \to \beta \times \omega$ is its
natural extension to $\omega$-tuples, then $f_{\times\omega}^* : \tS_{\beta\times\omega}(T) \to
\tS_{\alpha\times\omega}(T)$ restricts to a map ${f^{ind}}^* : \tS^{ind}_\beta(T) \to
\tS^{ind}_\alpha(T)$, so $\tS^{ind}(T)$ is a sub-functor of
$\tS_{\times\omega}(T)$.

\begin{lem}
  \label{lem:descdef}
  \begin{enumerate}
  \item For every $\alpha$ there is a unique map $\Fd_\alpha : \tS_\alpha^{ind}(T) \to
    \tS_\alpha(T^\fP)$ satisfying $\Fd_\alpha(q) = p \Longleftrightarrow q \in
    \mcl(p)$.
  \item For closed sets $F \subseteq \tS^{ind}_\alpha(T)$ and $F' \subseteq
    \tS_\alpha(T^\fP)$, we have $\Fd_\alpha(F) = R_F$ and $\Fd_\alpha^{-1}(F') =
    \mcl(F')$, and these sets are closed.
    In particular, every $\Fd_\alpha$ is continuous and closed.
  \item \label{item:sq}
    Let $f : \alpha \to \beta$ be a map.
    Then the following diagram commutes, which makes $\Fd :
    \tS^{ind}(T) \to \tS(T^\fP)$ a morphism of functors:
    \begin{gather*}
      \begin{gathered}[c]
        \xymatrix{\tS^{ind}_\beta(T) \ar[r]^{\Fd_\beta} \ar[d]_{{f^{ind}}^*} &
          \tS_\beta(\fP) \ar[d]^{f^*}
          \\
          \tS^{ind}_\alpha(T) \ar[r]_{\Fd_\alpha} & \tS_\alpha(\fP)}
      \end{gathered}
    \end{gather*}
    Moreover, if $p \in \tS_\alpha(T^\fP)$, $q \in
    \Fd_\alpha^{-1}(p) = \mcl(p) \subseteq \tS^{ind}_\alpha(T)$ and $p' \in
    {f^*}^{-1}(p) = f_*(p) \subseteq \tS_\beta(T^\fP)$,
    then there is $q' \in \mcl(p') \cap f^{ind}_*(\mcl(p)) \subseteq
    \tS^{ind}_\beta(T)$.
  \item $\Fd : \tS^{ind}(T) \to \tS(T^\fP)$ is a quotient map, meaning
    that $\Fd$ is a surjective map, and the topology on $\tS(T^\fP)$ is
    maximal such that $\Fd$ is continuous.
  \end{enumerate}
\end{lem}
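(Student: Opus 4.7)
The proof revolves around the observation in part (i): the collection $\{\mcl(p) : p \in \tS_\alpha(T^\fP)\}$ partitions $\tS^{ind}_\alpha(T)$, and $\Fd_\alpha$ is simply the projection onto this partition. I would first verify this foundational claim. Uniqueness of $p$ with $q \in \mcl(p)$ is the implication (iii) $\Rightarrow$ (i) of \prettyref{lem:typchr}. For existence, given $q \in \tS^{ind}_\alpha(T)$, realise $q$ by an indiscernible sequence $(a_i)_{i < \omega}$, extend to $(a_i)_{i \leq \omega}$ indiscernible, and form the pair $(D, P)$ by the construction in (ii) $\Rightarrow$ (i) of \prettyref{lem:cncrind}; then $q \in \mcl^{(D,P)}(a_\omega)$, so $p = \tp^\fP(a_\omega)$ is the required image.

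For part (ii), once $\Fd_\alpha$ is defined the set-theoretic identities $\Fd_\alpha^{-1}(F') = \mcl(F')$ and $\Fd_\alpha(F) = R_F$ are immediate from (i), because $q \in \mcl(F')$ iff the unique $p$ with $q \in \mcl(p)$ lies in $F'$, and $R_F = \{p : \mcl(p) \cap F \neq \emptyset\}$ by definition. The substance is that both sides are closed, and this is what \prettyref{lem:Rmcl} buys us: if $F'$ is cut out by a partial $\Delta^\fP$-type $\bigwedge_i R_{\varphi_i}$ then $\mcl(F')$ is cut out by the partial $\Delta$-type $\bigwedge_i \mcl(R_{\varphi_i})$, and conversely if $F$ is cut out by a partial $\Delta$-type $\rho$ (including the indiscernibility axioms), then $R_F$ is cut out by the $\Delta^\fP$-conjunction of $R_\varphi$ for $\varphi \in \rho$. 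Hence $\Fd_\alpha$ is simultaneously continuous and closed.

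Part (iii) follows easily. For the commuting square, realise $q \in \mcl(p)$ as a Morley sequence $(a_i)$ over $P$ with $a_\omega \models p$ in some pair; applying $f_*$ pointwise yields a Morley sequence $(f_*(a_i))$ over $P$ with $f_*(a_\omega) \models f^*(p)$, so $f^{ind*}(q) \in \mcl(f^*(p))$. The \emph{moreover} clause is then a diagram chase: by (i), pick any $q' \in \mcl(p')$; commutativity gives $\Fd_\alpha(f^{ind*}(q')) = f^*(p') = p$, so $f^{ind*}(q') \in \mcl(p)$, placing $q'$ in both sets. Finally, part (iv) is pure topology: $\Fd_\alpha$ is surjective because Morley sequences always exist, and a continuous, closed, surjective map is automatically a quotient map. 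The only step requiring genuine bookkeeping is the dictionary between $\Delta^\fP$-types and $\Delta$-types in part (ii), but this is precisely what \prettyref{lem:Rmcl} has already packaged.
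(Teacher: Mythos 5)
Your treatment of parts (i), (ii), (iv), and the commutativity of the square in (iii) matches the paper's argument: uniqueness and existence of $\Fd_\alpha$ via \prettyref{lem:typchr} and \prettyref{lem:cncrind}, closedness and continuity via \prettyref{lem:Rmcl}, and the quotient-map conclusion from surjective $+$ closed $+$ continuous. Your derivation of commutativity is also the right one: a restriction (or repetition) of a Morley sequence over $P$ is again a Morley sequence over $P$, by monotonicity of nonforking.

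The weak link is the \emph{moreover} clause of (iii). Your ``diagram chase'' (take any $q' \in \mcl(p')$; commutativity forces $f^{ind*}(q') \in \mcl(p)$) is indeed valid for the clause as literally printed, but notice that it makes the hypothesis $q \in \Fd_\alpha^{-1}(p)$ completely idle --- $q$ never appears in your conclusion --- and renders the whole ``moreover'' a triviality that follows from commutativity plus surjectivity. The paper's own proof signals that something more is going on: it cites a two-directional fact about Morley sequences (``$(a_i)$ is Morley in $\tp(a/P)$ iff there are $b_{<\omega}$ with $(a_ib_i)$ Morley in $\tp(ab/P)$'') and assigns the \emph{hard} direction --- building the $b_i$'s given the $a_i$'s --- to the moreover clause, while commutativity uses only the easy restriction direction. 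The content that is actually needed (and used in the proof of \prettyref{thm:desc} to establish $\Fd(p) \vdash \psi \Longrightarrow p \vdash \hat\psi$) is that $q'$ may be chosen so that $f^{ind*}(q') = q$, i.e.\ $q' \in \mcl(p') \cap f^{ind}_*(q)$: a specific lift of the given Morley sequence $q$ to a longer one realising $p'$, not merely some $q'$ whose restriction happens to land in $\mcl(p)$. Your argument does not produce this; it would need the extension fact: given a Morley sequence $(a_i)$ in $\tp(a/P)$ and a super-tuple $b \supseteq a$, one can find $(b_i)$ Morley in $\tp(b/P)$ with $b_i \circ f = a_i$. I would recheck the literal statement of the lemma against its use downstream and supply this extension step explicitly.
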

\begin{proof}
  \begin{enumerate}
  \item For every $q \in \tS_\alpha^{ind}(T)$ there is at most one value
    that $\Fd_\alpha$ can take, since $p \neq p' \Longrightarrow \mcl(p) \cap \mcl(p') =
    \emptyset$.
    Such a value always exists, as can be seen by applying
    \prettyref{lem:cncrind} with $\beta = 1$.
  \item This is just what \prettyref{lem:Rmcl} says.
  \item It is a fact that if $a$, $b$ and $P$ are given, then a
    sequence $(a_i : i < \omega)$ is a Morley sequence in $\tp(a/P)$ if
    and only if there are $b_{<\omega}$ such that $(a_ib_i : i < \omega)$
    is a Morley sequence in $\tp(ab/P)$.
    Then commutativity is one direction, and the moreover part is the
    other.
  \item Each $\Fd_\alpha$  is surjective since $\mcl(p) \neq \emptyset$ for every $p
    \in \tS_\alpha(\fP)$.
    A surjective, closed and continuous map is a quotient map.
  \end{enumerate}
\end{proof}

Thus, we could have defined the topology on $\tS(\fP)$ from the
beginning as the quotient topology, without ever bothering to define a
language explicitly.
Then, the commutativity statement in
\prettyref{lem:descdef}.\ref{item:sq} shows that
$f^* : \tS_\beta(\fP) \to \tS_\beta(\fP)$ is continuous, and the moreover
part shows that $f^*$ is closed.
As every set $\mcl(p)$ is closed, the topology on $\tS(\fP)$ is $T_1$,
and it is compact as the quotient of a compact topology.

In short, we could have skipped everything that comes after
\prettyref{cor:mcleq}, and still conclude that $\tS(\fP)$ is a compact
type-space functor, so there is a positive Robinson theory $T^\fP$ in
\emph{some} language such that $\tS(\fP) = \tS(T^\fP)$, but this time
also as topological functors.
In fact, we could have skipped the entire first section, constructing
$\tS(\fP)$ directly as the quotient of $\tS^{ind}(T)$ by the
appropriate equivalence relation (but then, of course, we wouldn't
know what it is that we are constructing).

This very abstract approach still seems (at least to the author) quite
convenient, and allows a few elegant observations.
Recall from \cite{pezz:fnctcat}:

\begin{dfn}
  \label{dfn:desc}
  \begin{enumerate}
  \item Let $\alpha_\Fd$ be an ordinal and $S$, $S'$ compact type-space
    functors.
    Let $\Fd : S_{\times\alpha_\Fd} \dashrightarrow S'$ be a continuous
    partial map, meaning that $(S_{\times\alpha_\Fd})_I = S_{I\times\alpha}$, $\dom(\Fd)
    \subseteq S_{\times\alpha_\Fd}$ is a closed sub-functor, and
    $\Fd : \dom(\Fd) \to S'$ is a continuous surjective morphism
    of functors.
    If $\varphi(x_{<n})$ is a formula in the language of $S'$ identify it with
    the closed set it defines $\varphi \subseteq S'_n$, and let $\tilde \varphi(\bar
    x_{<n})$ be the partial type in the language of $S$ defining
    $\Fd_n^{-1}(\varphi) \subseteq S_{n\times\alpha_\Fd}$.
    \\
    Let $x_{<n+m}$ be a tuple of variables, and let:
    \begin{align*}
      \psi(x_{<n}) & = \exists x_{\in[n,n+m)} \, \bigwedge_{j < l}
      \varphi_j(x_{i_{j,0}},\ldots,x_{i_{j,k_j-1}})
      \\
      \hat \psi(\bar x_{<n}) & = \exists \bar x_{\in[n,n+m)} \, \bigwedge_{j < l}
      \tilde \varphi_j(\bar x_{i_{j,0}},\ldots,\bar x_{i_{j,k_j-1}}),
    \end{align*}
    where each $\varphi_j$ is an $k_j$-ary formula, and $i_{j,s} < m+n$ for
    $j < l$ and $s < k_j$.
    \\
    Then $(\Fd,\alpha_\Fd)$ is a \emph{description}
    of $S'$ in $S$,  written $\Fd : S \dashrightarrow S'$,
    if whenever $\psi,\hat \psi$ are as above and $p \in \dom(\Fd)$ is in
    the right number of variables then:
    \begin{gather}
      \label{eq:desc}
      p \vdash \hat \psi \Longleftrightarrow \Fd(p) \vdash \psi
    \end{gather}
  \item A description is \emph{closed} if $\Fd$ is a closed map.
  \item If $T$ and $T'$ are simple cats, then a morphism $\Fr : \tS(T)
    \to \tS(T')$ \emph{preserves independence} if whenever $a,b,c$ and
    $a',b',c'$ are possibly infinite tuples in the universal domains
    of $T$ and $T'$, respectively, and $\tp^{T'}(a',b',c') =
    \Fr(\tp^T(a,b,c))$, then $b \ind_a^T c \Longleftrightarrow b' \ind_{a'}^{T'} c'$.
  \end{enumerate}
\end{dfn}

\begin{lem}
  \label{lem:inddesc}
  Let $q_j(x^{<\omega}_{<k_j})$ be partial types for $j < l$, each of
  which implying that
  $(x^s_{<k_j} : s < \omega)$ is an indiscernible sequence of $k_j$-tuples
  (in other words, $q_j \vdash \dom(\Fd_{k_j})$).
  \\
  Assume that $\pi(y^{<\omega}_{<n}) = \bigwedge_{j < l} q_j(y^{<\omega}_{i_{j,0}},
  \ldots, y^{<\omega}_{i_{j,k_j-1}})$ is consistent, where $i_{j,t}
  < n$ for every $j < l$ and $t < k_j$.
  Then it can be realised in $\dom(\Fd)$, that is to say that it has a
  realisation which is an indiscernible sequence of $n$-tuples.
\end{lem}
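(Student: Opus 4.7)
The plan is to prove \prettyref{lem:inddesc} by a two-step Erdős-Rado argument: first blow up any realization of $\pi$ into a long sequence of $n$-tuples whose specified sub-columns remain indiscernible of the correct EM-type, then extract an indiscernible sub-sequence of $n$-tuples and verify that the $q_j$'s survive. For notational sanity, write $y^s_{i_{j,*}}$ for the $k_j$-tuple $y^s_{i_{j,0}},\ldots,y^s_{i_{j,k_j-1}}$.

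By consistency of $\pi$, I fix $(a^s_{<n} : s < \omega) \models \pi$, so that for each $j < l$ the sub-column $(a^s_{i_{j,*}} : s < \omega)$ is indiscernible and realizes $q_j$. For a sufficiently large cardinal $\lambda$ I introduce the partial type $\Sigma(y^s_{<n} : s < \lambda)$ asserting, for every $j < l$, every $m < \omega$, and every increasing $s_1 < \cdots < s_m < \lambda$, the equality $\tp(y^{s_1}_{i_{j,*}},\ldots,y^{s_m}_{i_{j,*}}) = \tp(a^0_{i_{j,*}},\ldots,a^{m-1}_{i_{j,*}})$. This is a genuine partial type, since thickness of $T$ makes ``equality of types'' type-definable; and it is finitely consistent, for any finite sub-type mentions only finitely many $s_i < \lambda$ and is realized by the assignment $y^{s_i} := a^{i-1}$, thanks to the indiscernibility of each sub-column of the original $a$-matrix. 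Realizing $\Sigma$ produces a long matrix $(a^s_{<n} : s < \lambda)$ in which every designated sub-column is still indiscernible of EM-type $q_j$.

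Now I apply the Erdős-Rado lemma for thick cats from \cite{pezz:fnctcat}: for $\lambda$ large enough, there is an indiscernible sequence $(b^s_{<n} : s < \omega)$ whose EM-type is contained in that of $(a^s_{<n} : s < \lambda)$. For each $j < l$, the sub-column $(b^s_{i_{j,*}} : s < \omega)$ is then indiscernible (as a sub-sequence of an indiscernible sequence), and every finite sub-tuple has the $q_j$-type, since it matches the type of some $m$-sub-tuple of $(a^s_{i_{j,*}} : s < \lambda)$, which lies in $q_j$ by construction. Because $q_j$ implies indiscernibility and is therefore determined by its finite restrictions together with indiscernibility, we conclude $(b^s_{i_{j,*}} : s < \omega) \models q_j$, whence $(b^s_{<n} : s < \omega) \models \pi$ and simultaneously lies in $\dom(\Fd)$, as the whole row-sequence is indiscernible. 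The main obstacle is just that thickness must do double duty---first to express $\Sigma$ as a partial type, and then to supply the Erdős-Rado extraction---but both are standard features of thick cats, so no serious technical difficulty arises.
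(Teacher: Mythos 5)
Your proof follows essentially the same two-step route as the paper: stretch a realization of $\pi$ to a length-$\lambda$ matrix whose designated sub-columns remain indiscernible with the right EM-type, then extract an indiscernible sequence of $n$-tuples and check that the $q_j$'s persist. One correction, though: you justify the consistency of your stretching type $\Sigma$ by saying ``thickness of $T$ makes `equality of types' type-definable,'' but that is the definition of \emph{semi-Hausdorffness}, a strictly stronger hypothesis than thickness (the paper is careful to distinguish these in the introduction, and only assumes $T$ thick). Fortunately your $\Sigma$ doesn't actually need that property: the right-hand side of each type-equality is the \emph{fixed} type $\tp(a^0_{i_{j,*}},\ldots,a^{m-1}_{i_{j,*}})$, and ``$y$ realizes this fixed maximal type'' is automatically a partial type; maximality of types in a cat then gives the claimed equality. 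The paper's own stretching condition $\pi'$ instead asserts ``$\pi$ holds on the initial $\omega$-block'' together with ``each sub-column of length $\lambda$ is indiscernible,'' which is where thickness is genuinely used; your formulation sidesteps that use, at the cost of the misattributed justification. Either variant is fine, but you should fix the appeal to thickness.
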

\begin{proof}
  Fix a very big $\lambda$, and let $\pi'(y^{<\lambda}_{<n})$ say that
  $\pi(y^{<\omega}_{<n})$, and in addition for every $j < l$ the sequence
  $(y^s_{i_{j,<k_j}} : s < \lambda)$ is indiscernible.
  Then $\pi'$ is consistent by compactness, and let $a^{<\lambda}_{<n}
  \models \pi'$.
  By indiscernibility, we have $a^{s_{<\omega}}_{<n} \models \pi$ for every
  increasing sequence $s_0 < s_1 < \cdots < \lambda$.
  As we took $\lambda$ sufficiently big, we can extract an indiscernible
  sequence $(b^s_{<n} : s < \omega)$ such that, for every $t < \omega$ there
  are $s^t_0 < \cdots < s^t_{t-1} < \lambda$ such that $b^{<t}_{<n} \equiv a^{s^t_{<t}}_{<n}$,
  whereby $b^{<\omega}_{<n} \models \pi$ as required.
\end{proof}

\begin{thm}
  \label{thm:desc}
  \begin{enumerate}
  \item The map $\Fd : \tS^{ind}(T) \to \tS(T^\fP)$, viewed as a
    partial map $\Fd : \tS_{\times\omega}(T) \dashrightarrow \tS(T^\fP)$, is a closed
    description also noted $\Fd : T \dashrightarrow T^\fP$, with a factor
    $\alpha_\Fd = \omega$, and domain $\dom(\Fd) = \tS^{ind}(T)$.
  \item This description is functorial: if $g : T \to T'$ is any
    morphism of type-space functors of thick simple cats, then there is
    a unique morphism $g^\fP : T^\fP \to {T'}^\fP$ that makes the
    following diagram commute:
    \begin{gather*}
      \xymatrix{T \ar[r]^g \ar@{-->}[d]_{\Fd_T} & T' \ar@{-->}[d]^{\Fd_{T'}}
        \\
        T^\fP \ar@{..>}[r]^{g^\fP} & {T'}^\fP}
    \end{gather*}
    \item If $g$ preserves independence, then so does $g^\fP$.
  \end{enumerate}
\end{thm}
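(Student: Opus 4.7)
The plan is to verify the three parts in turn, relying on the structural facts already established: \prettyref{lem:descdef} for the basic properties of $\Fd$, \prettyref{lem:inddesc} for the joint realisation of indiscernible sequences, and \prettyref{prp:indPeq} for the characterisation of independence in $T^\fP$.

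For part~(i), continuity, closedness, surjectivity of each $\Fd_\alpha$ and the closedness of $\dom(\Fd) = \tS^{ind}(T)$ inside $\tS_{\times\omega}(T)$ are furnished by \prettyref{lem:descdef}, so what remains is the biconditional \prettyref{eq:desc}. Given $\psi(x_{<n}) = \exists x_{[n,n+m)}\,\bigwedge_{j<l}\varphi_j(x_{i_{j,\ast}})$ and its pullback $\hat\psi(\bar x_{<n})$, the direction $p \vdash \hat\psi \Longrightarrow \Fd(p)\vdash\psi$ is immediate: apply $\Fd$ coordinatewise to a realisation of $\hat\psi$ and use that $\Fd$ pulls back each $\varphi_j$ to $\tilde\varphi_j$ by construction. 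The converse is the substantive direction: from $c_{<n+m}$ realising $\psi$ in $T^\fP$ I would produce concurrent Morley sequences over $P$, obtaining for each $s<n+m$ an $\omega$-tuple $\bar c_s$ whose $T$-type is an indiscernible sequence and whose Morley class maps under $\Fd$ to $\tp^\fP(c_s)$; each $\bar c_{i_{j,\ast}}$ then satisfies $\tilde\varphi_j$. This makes the partial type $p(\bar y_{<n})\wedge\bigwedge_j\tilde\varphi_j(\bar y_{i_{j,\ast}})$ consistent—after replacing $\tp(\bar c_{<n})$ by the $\mcleq$-equivalent type $p$ using the moreover clause of \prettyref{lem:descdef}—and \prettyref{lem:inddesc} then supplies a joint indiscernible realisation, which is precisely a witness to $\hat\psi$ for $p$.

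For part~(ii), the diagram forces $g^\fP_\alpha(p) = \Fd_{T'}(g^{ind}_\alpha(q))$ for any $q\in\mcl(p)$, where $g^{ind}$ denotes the restriction of $g_{\times\omega}$ to indiscernible-sequence types. Since $g$ is a morphism of type-space functors it commutes with every pullback, and both indiscernibility and the concurrent-indiscernibility relation defining $\mcleq$ are cut out by permutation-invariance conditions on the index set, so $g^{ind}$ is well defined and respects $\mcleq$. Consequently $g^\fP$ does not depend on the choice of $q$; functoriality in the index, continuity, and uniqueness then follow from the fact that $\Fd_T$ is a closed, surjective, hence quotient, morphism, so any set-theoretic map out of $\tS(T^\fP)$ that is compatible with $\Fd_T$ is automatically continuous and unique.

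For part~(iii), I would invoke \prettyref{prp:indPeq}: $b\indP_a c$ in $T^\fP$ iff any (equivalently, some) $(a_ib_ic_i:i<\omega)\models\mcl(abc)$ satisfies $b_{<\omega}\ind_{a_{<\omega}}c_{<\omega}$ in $T$. Since $g^\fP$ is computed by lifting to a Morley-class representative, applying $g$, and composing with $\Fd_{T'}$, and since $g$ preserves $\ind$ by hypothesis, the condition transfers verbatim from $T^\fP$ to ${T'}^\fP$. The main obstacle in the whole argument is in part~(i): arranging that the lifted indiscernible realisation produced by \prettyref{lem:inddesc} has its first $n$-coordinate projection equal to the given $p$ rather than to an arbitrary $\mcleq$-equivalent type in $\mcl(\Fd(p))$, which forces the careful combination with \prettyref{lem:descdef} indicated above.
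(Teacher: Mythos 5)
Your part~(i) reverses the roles of the two key lemmas, and the direction you label ``immediate'' contains the real gap of the whole argument. You claim $p \vdash \hat\psi \Longrightarrow \Fd(p)\vdash\psi$ is immediate by ``applying $\Fd$ coordinatewise to a realisation of $\hat\psi$.'' But a realisation of $p(\bar x_{<n}) \wedge \bigwedge_j \tilde\varphi_j(\bar x_{i_{j,*}})$ only guarantees that each sub-$\omega$-tuple $\bar x_{i_{j,*}}$ (and $\bar x_{<n}$) is an indiscernible sequence; the joint tuple $\bar x_{<n+m}$ need not be, so its type need not lie in $\dom(\Fd_{n+m})$ and $\Fd$ cannot be applied to it. Applying $\Fd_{k_j}$ separately to each sub-block gives $T^\fP$-types $p_j$ satisfying $\varphi_j$, but there is no a priori reason these $p_j$ glue to a single $(n+m)$-tuple $T^\fP$-type extending $\Fd(p)$. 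Supplying such a common realisation in $\dom(\Fd_{n+m})$ is exactly the content of \prettyref{lem:inddesc}: it is that lemma, not a coordinatewise application of $\Fd$, that makes the $\Longrightarrow$ direction work. The paper puts the burden precisely there.

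Conversely, you spend the heavy machinery (concurrent Morley sequences, \prettyref{lem:inddesc}, a $\mcleq$-``replacement'' step) on the direction $\Fd(p)\vdash\psi \Longrightarrow p\vdash\hat\psi$, but this direction is handled in one stroke by the moreover clause of \prettyref{lem:descdef}.\ref{item:sq}: given the original domain element $q$ with $\Fd(q)=p_0$ and any $p'\in{f^*}^{-1}(p_0)$ with $p'\vdash\bigwedge_j\varphi_j$, the moreover clause produces $q'\in\mcl(p')$ whose restriction is already $q$ on the nose, so $q'\vdash\bigwedge_j\tilde\varphi_j$ and $q\vdash\hat\psi$ immediately. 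Your ``replacing $\tp(\bar c_{<n})$ by the $\mcleq$-equivalent type $p$'' is not justified as stated: $\mcleq$-equivalence of two types in $\tS^{ind}_n(T)$ does not by itself transfer consistency of an extension from one to the other, and in any case a realisation of a consistent partial type needs no further indiscernibility to witness $\hat\psi$, so invoking \prettyref{lem:inddesc} at the end of that direction is superfluous. Parts~(ii) and (iii) of your argument match the paper's proof and are fine: for (ii) one checks that a morphism of type-space functors, commuting with all index pullbacks, carries same-Morley-class types to same-Morley-class types, and (iii) is read off from \prettyref{prp:indPeq}.
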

\begin{proof}
  \begin{enumerate}
  \item Given \prettyref{lem:descdef}, all that is left to prove is
    \prettyref{eq:desc} of \prettyref{dfn:desc}.
    \\
    $\Longleftarrow$ follows from the moreover part of
    \prettyref{lem:descdef}.\ref{item:sq}.
    $\Longrightarrow$ follows from \prettyref{lem:inddesc}.
  \item Just verify that if $q,q' \in \tS^{ind}_\alpha(T)$ belong to the
    same Morley class, then so do $g(q),g(q')$.
  \item This is immediate from \prettyref{prp:indPeq}.
  \end{enumerate}
\end{proof}

We prove in \cite{pezz:fnctcat} that a theory describable in a
simple theory is simple.
Thus, had we taken the course proposed in the beginning of this
section, we could have concluded that $T^\fP$ is simple immediately,
even without giving an explicit characterisation of independence.

Recall also from \cite{pezz:fnctcat}:
\begin{dfn}
  Let $T$ be a simple cat and $T'$ a stable one.
  Then a \emph{stable representation} of $T$ in $T'$ is a morphism
  $\Fr : \tS(T) \to \tS(T')$ satisfying the following additional
  condition (called \emph{preservation of independence}):
  If $a,b,c$ are (possibly infinite) tuples in a model of $T$,
  $a',b',c'$ in a model of $T'$ and $\tp^{T'}(a',b',c') =
  \Fr(\tp^T(a,b,c))$ then $a \ind_b c \Longleftrightarrow a' \ind_{b'} c'$.
  \\
  With a minor abuse of notation we may also write it as $\Fr : T \to
  T'$.
\end{dfn}

\begin{cor}
  Assume that $T$ is simple and thick and has a thick stable
  representation (that is $\Fr : T \to T'$ where $T'$ is stable and
  thick).
  Then so does $T^\fP$.
\end{cor}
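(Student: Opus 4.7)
The plan is to take the given stable representation $\Fr\colon T\to T'$ and apply the functoriality of the $\fP$-construction (Theorem~\ref{thm:desc}) to obtain a stable representation $\Fr^\fP\colon T^\fP\to {T'}^\fP$. First I would observe that since $T'$ is thick and stable (hence thick and simple), the construction of ${T'}^\fP$ is legitimate: by Theorem~\ref{thm:TPcat}, ${T'}^\fP$ is a thick positive Robinson theory, and by the stability theorem in Section~3.2, stability of $T'$ yields stability of ${T'}^\fP$. Thus ${T'}^\fP$ is a candidate target for a thick stable representation of $T^\fP$.

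Next I would invoke Theorem~\ref{thm:desc}(ii): the morphism of type-space functors $\Fr\colon T\to T'$ induces a (unique) morphism $\Fr^\fP\colon T^\fP\to {T'}^\fP$ making the square with the descriptions $\Fd_T,\Fd_{T'}$ commute. The remaining substantive check is the preservation of independence. Here I would use Theorem~\ref{thm:desc}(iii), which states precisely that if $\Fr$ preserves independence (as it does, being a stable representation), then so does $\Fr^\fP$. Unravelling: given tuples $a,b,c$ in $U^\fP$ and $a',b',c'$ in ${U'}^\fP$ with $\tp^{{T'}^\fP}(a',b',c') = \Fr^\fP(\tp^{T^\fP}(a,b,c))$, I would pick Morley sequences $(a_i,b_i,c_i)_{i<\omega}\models\mcl(abc)$ and their images, and use the characterisation $b\indP_a c \Longleftrightarrow b_{<\omega}\ind_{a_{<\omega}} c_{<\omega}$ from Proposition~\ref{prp:indPeq}, together with the fact that $\Fr$ transports the sequence $(a_i,b_i,c_i)$ to a Morley witness for the image pair-type, so $\ind$ on one side corresponds to $\ind$ on the other.

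Putting these together, $\Fr^\fP\colon T^\fP\to {T'}^\fP$ is a morphism of type-space functors into a thick stable cat which preserves independence; this is, by definition, a thick stable representation of $T^\fP$. I do not foresee any serious obstacle: all the work was done in establishing Theorems~\ref{thm:TPcat} and~\ref{thm:desc} and the stability theorem, and the argument here is essentially an assembly step. The only point requiring minor care is verifying that the hypothesis of Theorem~\ref{thm:desc}(ii) applies to $T'$, i.e.\ that $T'$ is thick and simple, which follows from its being thick and stable.
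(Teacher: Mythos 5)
Your proof is correct and follows essentially the same route as the paper: apply Theorem~\ref{thm:desc} to get the induced morphism $\Fr^\fP\colon T^\fP\to{T'}^\fP$ which preserves independence, and note that ${T'}^\fP$ is stable and thick by the earlier preservation results. The paper states this in one line; your version just unpacks the same three ingredients.
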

\begin{proof}
  Let $\Fr : T \to T'$ be a stable representation.  Then $\Fr^\fP : T^\fP
  \to {T'}^\fP$ preserves independence and ${T'}^\fP$ is stable, so it
  is a stable representation.
\end{proof}

Lastly, we would like to relate the lovely pairs construction with
another ``standard'' construction, namely the addition of a generic
automorphism.

\begin{dfn}
  For stable $T$, we let $\cC^A(T)$ denote the category of boundedly
  closed sets from $T$ equipped with an automorphism $\sigma$.
  \\
  If the category $\cC^A(T)$ forms a simple cat in a language
  extending that of $T$, and whose notion of
  independence is independence in $T$ of $\sigma$-closures, then we denote
  this cat by $T^A$ and say that \emph{$T^A$ exists}.
\end{dfn}

By \cite{pil:ec}, if $T$ is first order then $T^A$ exists, and it
can be further shown to be Robinson.
We do not wish to address here the issue of existence of $T^A$ in the
general case, so we will just assume that $T^A$ exists.
We do know however from \cite{pezz:fnctcat} that if $T^A$ exists
then we have a stable representation $\Fr_A : T^A \to T_{\times\omega}$, which
sends the type of an element in $T^A$ to the type in $T$ of its orbit
under the automorphism.

\begin{prp}
  Assume that $T^A$ does exist as and is thick.
  Then $(T^\fP)^A$ exists and is equal to
  $(T^A)^\fP$, and we have a commutative diagram, where $\Fr_A$ and
  $\Fr^\fP_A$ are the stable representations of $T^A$ and $T^\fP_A$,
  respectively:
  \begin{gather*}
    \xymatrix{T^A \ar[rr]^{\Fr_A} \ar@{-->}[d] & & T_{\times\omega} \ar@{-->}[d]
      \\
      (T^\fP)^A = (T^A)^\fP \ar[rr]^{\Fr^\fP_A} & & T^\fP_{\times\omega}}
  \end{gather*}
\end{prp}
\begin{proof}
  Let $\cC^\fP(T^A)$ be the category of pairs in $T^A$.
  Let $(A,\sigma) \in \cC^A(T^\fP)$.
  Then $A$ is boundedly closed in the sense of $T^\fP$, which means
  that $A^c \subseteq \dcl(P(A))$ (i.e., $A \ind_{P(A)} P$), and both $A$ and
  $P(A)$ are boundedly closed in the sense of $T$.
  Writing it as $(A,\sigma,P)$ it can also be viewed as a pair
  in the sense of $T^A$ and therefore an object of $\cC^\fP(T^A)$.
  \\
  This mapping from $\cC^A(T^\fP)$ into $\cC^\fP(T^A)$
  is a full and faithful functor:
  indeed, if $f : (A,P,\sigma) \to (B,P,\sigma)$ is a mapping, then it is a
  morphism in the sense of either category if and only if
  $f(P(A)) = P(f(A))$, $f\circ\sigma_A = \sigma_B\circ f$ and $f(A) \ind_{f(P(A))} P(B)$
  (since $\sigma$ is an automorphism of $A$, $P(A)$ and $P(B)$,
  independence in the sense of $T$ and of $T^A$ is the same).
  Moreover, this functor is co-final: every object of $\cC^\fP(T^A)$
  embeds into the image of an object of $\cC^A(T^\fP)$.
  \\
  This means that since $\cC^\fP(T^A)$ is an abstract elementary
  category so is $\cC^A(T^\fP)$, and they have the same type-spaces.
  Therefore these two categories are equivalent for our purposes:  we
  can use the language we chose for $\cC^\fP(T^A)$ also for
  $\cC^A(T^\fP)$, and both have the same positive Robinson theory in
  this language $(T^A)^\fP = (T^\fP)^A$.
  Finally it is an easy exercise to see that independence in the sense
  of $(T^A)^\fP$ coincides with independence in the sense of $T^\fP$
  of the $\sigma$-closures.
  \\
  The commutativity of the diagram is also easy.
\end{proof}

\section{Lowness and negation}

\begin{dfn}
  \label{dfn:low}
  \begin{enumerate}
  \item  We say that a formula $\varphi$ is \emph{clopen} if it defines a clopen
    set in the type-space.  Equivalently, if $\lnot\varphi$ is equivalent to a
    positive formula (and then we identify them).
  \item We recall that a \emph{$k$-inconsistency witness} for a
    formula $\varphi(x,y)$ is a formula $\psi(y_{<k})$ such that $\psi(y_{<k}) \land
    \bigwedge_{i<k} \varphi(x,y_i)$ is inconsistent.
    \\
    A formula $\varphi(x,y)$ is \emph{low} if it has a
    $k$-inconsistency witness $\psi$ such that, for every indiscernible
    sequence $(a_i)$, $\{\varphi(x,a_i)\}$ is inconsistent if and only if
    $\models \psi(a_0, \ldots, a_{k-1})$ (in other words, it has a universal
    inconsistency witness for indiscernible sequences).
  \item $T$ is \emph{low} if every formula is.
  \end{enumerate}
\end{dfn}

We recall that a cat $T$ is Robinson if and only if the
type-spaces are totally disconnected if and only if we can choose the
language such that all basic formulas are clopen.  It is first order
if and only if existential formulas are clopen as well.

\begin{rmk}
  If $T$ is first order then $\varphi(x,y)$ is low if and only if there is
  $k < \omega$ such that, if $(a_i : i < \omega)$ is indiscernible and
  $\{\varphi(x,a_i)\}$ is inconsistent then it is $k$-inconsistent.  The
  proofs of several of the results below can be simplified
  accordingly.  However, note that being Robinson does not
  suffice, since we need the negation of $\exists x \; \bigwedge_{i < k}
  \varphi(x,y_i)$, and this is an existential formula.
\end{rmk}

\begin{ntn}
  Let $\varphi(x,y)$ be a $T$-formula.  Then $\varphi(P,y) = \exists x \in P \;
  \varphi(x,y)$ is positive.
  \\
  If $\psi$ is a $k$-inconsistency witness for $\varphi$ then $\lnot_\psi\varphi(P,x) =
  R_\psi(y)$ is positive as well.
  \\
  If the existential formula $\psi(\bar y) = \exists x\; \bigwedge_{i < k}
  \varphi(x,y_i)$ defines a clopen set, then we write $\lnot_k\varphi(P,y) =
  R_{\lnot\psi}(y)$.
\end{ntn}

\begin{lem}
  If $A \subseteq B$ and $a \ind_A B$ then $\varphi(x,a)$ divides over $A$ if and
  only if it divides over $B$.
\end{lem}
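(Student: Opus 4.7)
The plan is to handle the two directions separately. The direction from dividing over $B$ to dividing over $A$ is purely formal and does not require $a \ind_A B$: any $B$-indiscernible sequence witnessing dividing over $B$ is automatically $A$-indiscernible since $A \subseteq B$, and the very same sequence then witnesses dividing over $A$.

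For the converse direction I plan to invoke Kim's Lemma for thick simple cats: $\varphi(x,a)$ divides over $A$ if and only if for every Morley sequence $(a_i)$ in $\lstp(a/A)$ with $a_0 = a$, the set $\{\varphi(x, a_i) : i < \omega\}$ is inconsistent. The strategy is to produce a $B$-indiscernible witness by constructing a sequence that is simultaneously Morley over $B$ and over $A$. Concretely, I take a Morley sequence $(a_i)$ over $B$ in $\lstp(a/B)$ with $a_0 = a$, and verify that it is also a Morley sequence over $A$ in $\lstp(a/A)$. The Lascar strong type condition transfers automatically, since $a_i \lseq[B] a$ entails $a_i \lseq[A] a$. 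For the independence condition, the assumption $a \ind_A B$ together with $a_i \equiv_B a$ gives $a_i \ind_A B$; combining this with $a_i \ind_B a_{<i}$ and applying transitivity (valid because $A \subseteq B$) yields $a_i \ind_A B a_{<i}$, whence $a_i \ind_A a_{<i}$ by monotonicity. Kim's Lemma then forces $\{\varphi(x,a_i)\}$ to be inconsistent, and since $(a_i)$ is $B$-indiscernible starting with $a$, this exhibits dividing over $B$.

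The main technical point is the clean use of $a \ind_A B$ to upgrade Morley-over-$B$ to Morley-over-$A$. Without this assumption, $a_i \ind_B a_{<i}$ does not in general imply $a_i \ind_A a_{<i}$, and the argument would collapse. Once this upgrade is in place, the standard theory of Morley sequences in thick simple cats and Kim's Lemma (both established in \cite{pezz:catsim,pezz:fnctcat}) complete the proof.
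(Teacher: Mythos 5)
Your proof is correct and follows exactly the route the paper compresses into one line ("A Morley sequence for $a$ over $B$ is also a Morley sequence over $A$"): take a Morley sequence over $B$, use $a \ind_A B$ and transitivity to upgrade it to a Morley sequence over $A$, then invoke Kim's lemma. You have simply made explicit the Lascar-strong-type and independence bookkeeping that the paper leaves implicit.
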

\begin{proof}
  A Morley sequence for $a$ over $B$ is also a Morley sequence over
  $A$.
\end{proof}

\begin{lem}
  If $a$ and $b$ are tuples in $U^\fP$ satisfying precisely the same
  $\varphi(P,y)$ formulas, then $a \equiv^\fP b$.
\end{lem}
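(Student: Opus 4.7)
My plan is to show that every $T^\fP$-formula $R_\varphi(y)$ (with $\varphi\in\Delta$) is equivalent to a formula of the form $\theta(P,y)$ for some $T$-formula $\theta$. Since the formulas $R_\varphi$ generate $\Delta^\fP$, this will mean that the family $\{\varphi(P,y):\varphi\in\Delta\}$ already determines the complete $\fP$-type of $y$, and the lemma follows immediately from the hypothesis.

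To begin, the hypothesis has easy consequences that are useful for orientation: applying it to $\varphi(x,y)=\psi(y)$ for $\psi\in\Delta$ (with $P$ nonempty, which we may assume) yields $\tp^T(a)=\tp^T(b)$, and applying it to $\varphi(x,y)=(x=y_i)$ gives $a\cap P=b\cap P$ componentwise. Next I would construct $\theta$. By definition, $R_\varphi(y)$ holds iff $\mcl(y)$ is consistent with $\varphi$, that is, iff there exists a Morley sequence for $y$ over $P$ satisfying $\varphi(y_{<k})$. Since $\Cb(y/P)\in\bdd(P)$, local character provides a tuple $p\in P$ of length bounded in $|T|$ such that $\Cb(y/P)\in\bdd(p)$; for such $p$, a Morley sequence for $y$ in $\tp(y/p)$ that is additionally ``generic with respect to $P$'' has the same $T$-type as a Morley sequence for $y$ over $P$, so realizing $\varphi$ on one realizes it on the other. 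The condition ``$p$ is a Morley base for $y$ and there is a Morley sequence for $y$ over $p$ witnessing $\varphi$'' is, by thickness of $T$ (indiscernibility being type-definable) together with the characterization of Morley classes by concurrent indiscernibility (Lemma~1.12), a type-definable condition on $(y,p)$, hence given by a $T$-formula $\theta(x,y)$. One thus gets $R_\varphi(y)\Leftrightarrow\theta(P,y)$, and the hypothesis then yields $R_\varphi(a)\Leftrightarrow R_\varphi(b)$ for every $\varphi$, so that $a\equiv^\fP b$.

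\textbf{The main obstacle} is formalizing ``$p$ captures $\Cb(y/P)$'' as a purely $T$-theoretic condition on $(p,y)$, since naively this notion mentions $P$. The way around this is to characterize being a Morley base via the $T$-intrinsic Morley-class data: Lemma~1.12 tells us that $\mcl(y)$ is captured by a partial $T$-type on concurrently indiscernible sequences, which can be evaluated at a tuple $p$ without further reference to $P$. This is precisely the translation that needs to be carried out carefully so that the equivalence $R_\varphi(y)\Leftrightarrow\theta(P,y)$ really holds for some $\theta\in\Delta$ (using thickness to collapse existential quantifiers over indiscernible sequences into a partial type, as in Corollary~1.15).
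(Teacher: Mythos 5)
The proposal aims to show that every basic $\Delta^\fP$-formula $R_\varphi$ is equivalent to a formula of the form $\theta(P,y)$, from which the lemma would follow immediately. This is a genuinely different strategy from the paper's, and unfortunately it has a gap precisely at the point you flag as ``the main obstacle.''

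The problem is that the $T$-intrinsic characterization of Morley classes via (concurrently) indiscernible sequences (\prettyref{lem:cncrind}, \prettyref{cor:mcleq}) tells you when $q\in\mcl^{(D,P')}(y)$ for \emph{some} pair $(D,P')$ containing $y$; it does not pin down $\mcl(y)$ computed with respect to the actual predicate $P$ of $U^\fP$. Likewise, given a small tuple $p\subseteq P$, the condition ``there is a Morley sequence for $y$ over $p$ satisfying $\varphi$'' is a $T$-condition on $(p,y)$, but it does not imply $R_\varphi(y)$: unless $y\ind_p P$ holds, a Morley sequence over $p$ need not be a Morley sequence over $P$, and the type of a Morley sequence over $p$ need not lie in $\mcl(y)$. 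The statement ``$p$ is a Morley base for $y$ over $P$'' is equivalent to $y\ind_p P$, and this does mention $P$ irreducibly — it is not a property of the pair $(p,y)$ alone, and existentially quantifying over $p\in P$ does not remove the dependence. So the proposed $\theta(P,y)$ would satisfy $R_\varphi(y)\Rightarrow\theta(P,y)$, but not the converse, and the equivalence you need fails. (This is not a repairable oversight: if every $R_\varphi$ were expressible as $\theta(P,y)$, the formulas $\varphi(P,y)$ would generate the topology on $\tS(T^\fP)$, and the section's subsequent analysis of lowness shows this fails in general.)

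The paper's proof avoids trying to express $R_\varphi$ in terms of the $\varphi(P,y)$ formulas. Instead it works at the level of complete types: pick a small $A\subseteq P$ with $a\ind_A P$; use the direction $\varphi(P,a)\Rightarrow\varphi(P,b)$ of the hypothesis, together with compactness, to find $B\subseteq P$ with $aA\equiv^T bB$; then use the other direction $\varphi(P,b)\Rightarrow\varphi(P,a)$, together with the auxiliary lemma that $\varphi(x,c)$ divides over $A$ iff over $P$ when $c\ind_A P$, to transfer non-dividing and conclude $b\ind_B P$. Then $(aA,A)$ and $(bB,B)$ are two $T$-isomorphic pairs, each freely embedded in $U^\fP$, hence $a\equiv^\fP b$. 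The key point — which your approach loses — is that both directions of ``satisfying precisely the same $\varphi(P,y)$'' are used, for two quite different purposes; the hypothesis separates types without the $\varphi(P,y)$'s needing to form a basis for the topology.
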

\begin{proof}
  Let $A \subseteq P$ be such that $a \ind_A P$.  Since $b$ satisfied all
  $\varphi(x,P)$ predicates that $a$ does, then by compactness, we can find
  $B \subseteq P$ such that $aA \equiv bB$.  Assume now that $\models \varphi(c,b)$ for
  some $c \in P$.  Then $\models \varphi(P,b) \Longrightarrow \models \varphi(P,a)$, whereby $\varphi(x,a)$
  does not divide over $P$ and therefore neither over $A$ (since $a
  \ind_A P$).  Then $\varphi(x,b)$ does not divide over $B$ either and $b
  \ind_B P$.  This suffices to see that $a \equiv^\fP b$.
\end{proof}

\begin{lem}
  Let $a$ be a tuple in $U^\fP$ and $\varphi(x,y)$ a $T$-formula.  Then
  $\varphi(x,a)$ does not divide over $P$ if and only if it is satisfied in
  $P$.
\end{lem}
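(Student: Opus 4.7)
The easy direction is $(\Leftarrow)$: if $c \in P$ satisfies $\models \varphi(c,a)$, then $c \in P \subseteq \bdd(P)$, so $c \ind_P a$ in $T$ (since an element of $\bdd(P)$ is independent from anything over $P$). Hence $\tp^T(c/Pa)$ does not divide over $P$, and in particular the formula $\varphi(x,a)$ it contains does not divide over $P$.

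The real work is the $(\Rightarrow)$ direction, where the idea is: shrink the base $P$ to a small set $A_0 \subseteq P$ without losing non-dividing, extend $\varphi(x,a)$ to a complete non-dividing type over $A_0 a$, then use loveliness to realise that type inside $P$. Concretely, assume $\varphi(x,a)$ does not divide over $P$. By local character in the thick simple cat $T$, choose $A_0 \subseteq P$ with $|A_0| \leq |T|$ and $a \ind_{A_0} P$. By the previous lemma applied to $A_0 \subseteq P$, the formula $\varphi(x,a)$ divides over $A_0$ iff it divides over $P$, so $\varphi(x,a)$ does not divide over $A_0$.

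By the extension axiom for thick simple cats, extend $\varphi(x,a)$ to a complete type $p \in \tS^T(A_0 a)$ which does not divide over $A_0$. Since $A_0 \subseteq A_0 \cup (a\cap P) = P(A_0 a) \subseteq A_0 a$, monotonicity of non-dividing gives that $p$ does not divide over $P(A_0 a)$ either. Now $U^\fP$ is a sufficiently saturated model of $T^\fP$, so by \prettyref{prp:lpair} it is $\kappa$-lovely for some $\kappa > |T|$, and $|A_0 a| \leq |T| < \kappa$. Applying condition (ii) of \prettyref{dfn:lpair} to the set $A_0 \cup a$ and the type $p$ yields some $c \in P$ with $c \models p$; in particular $\models \varphi(c,a)$, as required.

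\textbf{Main obstacle.} Nothing is deep here; the only delicate point is getting from non-dividing over the big set $P$ to non-dividing over a small set sitting inside $P$, which is precisely what local character together with the preceding lemma gives. Once $A_0$ is in hand, extending to a complete non-dividing type and invoking loveliness is routine, and one just has to check that the enlarged base $P(A_0 a)$ still sits between $A_0$ and $\dom(p)$ so that monotonicity applies.
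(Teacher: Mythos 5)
Your proof is correct and takes essentially the same route as the paper's: pass from non-dividing of the formula to a complete non-dividing type, then invoke loveliness of the universal domain to realise that type inside $P$. The one difference is precision: the paper takes the complete type over all of $aP$ and says "by loveliness we can realise $p$ in $P$," leaving implicit the reduction to a base of size $<\kappa$ (the loveliness clause in \prettyref{dfn:lpair} quantifies only over small $A$). You make that reduction explicit by first shrinking $P$ to a small $A_0$ via local character and then transferring non-dividing via the preceding lemma, before extending to a complete type over the small set $A_0a$ and applying clause (ii) of loveliness. This fills in exactly the step the paper glosses over; otherwise the two arguments coincide.
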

\begin{proof}
  If $\varphi(x,a)$ is realised in $P$, clearly it cannot divide over $P$.
  Conversely, assume that it does not divide over $P$.  Then there is
  a complete type $p \in \tS(aP)$ such that $p(x) \vdash \varphi(x,a)$ and $p$
  does not divide over $P$.  By loveliness of the universal domain, we
  can realise $p$ in $P$.
\end{proof}

\begin{cor}
  $\fP \models \lnot\varphi(P,y) \leftrightarrow \bigvee_\psi \lnot_\psi\varphi(P,y)$, where $\psi$ varies over all
  inconsistency witnesses for $\varphi$.
\end{cor}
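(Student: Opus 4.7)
The plan is to unravel each side through the previous lemma (which identifies ``$\varphi(x,a)$ does not divide over $P$'' with ``$\varphi(x,a)$ is realised in $P$''), and then translate dividing into inconsistency along a Morley sequence, using the definition of the predicates $R_\psi$.

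For the direction $(\Leftarrow)$, suppose $\lnot_\psi\varphi(P,a)$ holds in $\fP$ for some $k$-inconsistency witness $\psi(y_{<k})$ of $\varphi$, i.e., $\tp^\fP(a) \in R_\psi$. By definition of $R_\psi$ and the characterisation of $\mcl$, there exists $q(y_{<\omega}) \in \mcl(a)$ with $q \vdash \psi(y_{<k})$; realise $q$ by a Morley sequence $(a_i : i < \omega)$ over $P$ in $\tp(a/P)$ with $a_0 = a$. By indiscernibility and the inconsistency-witness property, $\bigwedge_{i < k}\varphi(x,a_i)$ is inconsistent, so $\{\varphi(x,a_i) : i < \omega\}$ is inconsistent, which shows $\varphi(x,a)$ divides over $P$. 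The previous lemma then gives that $\varphi(x,a)$ is not realised in $P$, i.e., $\fP \models \lnot\varphi(P,a)$.

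For the direction $(\Rightarrow)$, suppose $\fP \models \lnot\varphi(P,a)$. By the previous lemma $\varphi(x,a)$ divides over $P$, so we can take a Morley sequence $(a_i : i < \omega)$ over $P$ in $\tp(a/P)$ with $a_0 = a$ such that $\{\varphi(x,a_i)\}$ is inconsistent. By compactness a finite sub-conjunction $\bigwedge_{i < k}\varphi(x,a_i)$ is already inconsistent. Now the partial $\Delta$-type $\tp(a_0,\ldots,a_{k-1}) \cup \{\bigwedge_{i < k}\varphi(x,y_i)\}$ is inconsistent, and a second application of compactness yields a single $\Delta$-formula $\psi(y_{<k})$ in that type such that $\psi(y_{<k}) \land \bigwedge_{i < k}\varphi(x,y_i)$ is inconsistent; i.e., $\psi$ is a $k$-inconsistency witness for $\varphi$. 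Since the type $q$ of the Morley sequence $(a_i)$ lies in $\mcl(a)$ and contains $\psi(y_{<k})$, we obtain $\tp^\fP(a) \in R_\psi$, that is $\fP \models \lnot_\psi\varphi(P,a)$.

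The only subtle point is the extraction of a \emph{positive} witness $\psi$ from an inconsistent type: one must ensure that the compactness argument produces a formula in $\Delta$, which works because the type is a $\Delta$-type and a finite conjunction of $\Delta$-formulas is again in $\Delta$. The rest is routine once one observes that Morley sequences over $P$ are precisely what is coded by $\mcl$, so the two languages match up exactly as the $R_\psi$ predicates are designed to express.
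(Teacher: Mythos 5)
Your proof is correct and follows essentially the same chain of equivalences the paper uses: non-satisfiability in $P$ is dividing over $P$ (via the preceding lemma), dividing is witnessed by a Morley sequence, and the type of that sequence lies in $\mcl(a)$ and entails an inconsistency witness, which is exactly what $R_\psi$ codes. You simply unfold the paper's one-line chain of iffs into two explicit directions and spell out the two compactness steps (isolating a finite $k$ and then a single $\Delta$-formula $\psi$), which the paper leaves implicit; there is no gap and no genuinely different idea.
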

\begin{proof}
  $\varphi(x,a)$ is not satisfied in $P$ if and only if it divides over $P$
  if and only if there is a Morley sequence $(a_i : i < \omega)$ for $a$
  over $P$ such that $\{\varphi(x,a_i)\}$ is inconsistent if and only if
  there is $(a_i : i < \omega) \models \mcl(a)$ satisfying an inconsistency
  witness for $\varphi$.
\end{proof}

\begin{cor}
  If $a$ and $b$ are tuples in $\fP$, and $b$ satisfies every formula
  of the form $\varphi(P,y)$ or $\lnot_\psi\varphi(P,y)$ that $a$ does, then $a \equiv^\fP
  b$.
\end{cor}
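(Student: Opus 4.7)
The plan is to reduce this statement to the earlier lemma that tuples satisfying exactly the same $\varphi(P,y)$ formulas have the same $\fP$-type. So I just need to upgrade the one-sided hypothesis (everything satisfied by $a$ is satisfied by $b$) to a two-sided one, and the $\lnot_\psi\varphi(P,y)$ formulas are designed precisely to do this.

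Concretely, I would argue as follows. Suppose for contradiction that there is some $T$-formula $\varphi(x,y)$ such that $b \models \varphi(P,y)$ but $a \not\models \varphi(P,y)$, i.e., $a \models \lnot\varphi(P,y)$. By the preceding corollary, $\lnot\varphi(P,y)$ is equivalent in $\fP$ to $\bigvee_\psi \lnot_\psi\varphi(P,y)$ where $\psi$ ranges over the inconsistency witnesses for $\varphi$. Since this is a disjunction of formulas defining closed sets and $a$ satisfies the disjunction, by compactness (type-spaces are compact $T_1$, and $\lnot\varphi(P,y)$ cuts out a closed set covered by the closed sets defined by the $\lnot_\psi\varphi(P,y)$) there is some particular $\psi$ with $a \models \lnot_\psi\varphi(P,y)$. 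Then by hypothesis $b \models \lnot_\psi\varphi(P,y)$, hence $b \models \lnot\varphi(P,y)$, contradicting $b \models \varphi(P,y)$.

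Thus $a$ and $b$ satisfy exactly the same formulas of the form $\varphi(P,y)$, and the preceding lemma gives $a \equiv^\fP b$.

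The main technical point to double-check will be the compactness step that extracts a single $\psi$ from the disjunction: we are working in a cat, not in first order logic, and the witnesses $\psi$ may be quite general formulas. But since the closed set defined by $\lnot\varphi(P,y)$ is covered (in fact equal to) the union of the closed sets defined by the $\lnot_\psi\varphi(P,y)$, and the type of $a$ lies in that closed set, the argument reduces to the simpler statement that the partial type $\{\varphi(P,y)\} \cup \{\lnot \lnot_\psi\varphi(P,y) : \psi\}$ is inconsistent, whence finitely inconsistent. If one wants to avoid even this, one can instead use the dividing characterisation directly: $\fP \not\models \varphi(P,a)$ iff $\varphi(x,a)$ divides over $P$ iff some $\mcl(a)$-sequence witnesses an inconsistency witness $\psi$ for $\varphi$, i.e., $\fP \models \lnot_\psi\varphi(P,a)$ for some $\psi$, which again hands us the required $\psi$.
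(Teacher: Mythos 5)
Your proposal is correct and follows the same route as the paper: reduce to the statement that $a$ and $b$ satisfy exactly the same $\varphi(P,y)$ formulas, then invoke the preceding lemma. Your final paragraph, which extracts the witness $\psi$ via the dividing characterisation, is precisely how the preceding corollary should be read: that corollary was proved pointwise (a tuple $a$ fails $\varphi(P,a)$ iff $\varphi(x,a)$ divides over $P$ iff some $\mcl(a)$-sequence realises an inconsistency witness $\psi$), so it hands you the required $\psi$ directly with no further argument.

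One correction on the compactness digression you flagged as needing a double-check: that middle argument does not actually work as stated. The formula $\varphi(P,y)$ is positive, hence defines a closed set, so $\lnot\varphi(P,y)$ cuts out an \emph{open} set, not a closed one as you wrote; and the formulas $\lnot\lnot_\psi\varphi(P,y)$, being negations of positive formulas, are not in $\Delta^\fP$, so $\{\varphi(P,y)\} \cup \{\lnot\lnot_\psi\varphi(P,y) : \psi\}$ is not a partial $\Delta^\fP$-type and compactness of the cat does not apply to it. But as you anticipated, none of this is needed --- your dividing-based argument supplies $\psi$ without any topological reasoning, and is the correct unpacking of the paper's one-line proof.
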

\begin{proof}
  In this case $a$ and $b$ satisfy precisely the same $\varphi(P,y)$
  formulas.
\end{proof}

\begin{lem}
  A formula $\varphi(x,y)$ is low if and only if for every $\lambda$ there is a
  partial type $\Phi^{\mathrm{div}}_\varphi(y,Z)$, $|Z| = \lambda$, such that
  $\varphi(x,a)$ divides over a set $B$ of cardinality $\lambda$ if and only if $\models
  \Phi^{\mathrm{div}}_\varphi(a,B)$.
\end{lem}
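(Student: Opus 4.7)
I will handle the two directions separately.

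\emph{Forward direction ($\Rightarrow$).} Suppose $\varphi$ is low with $k$-inconsistency witness $\psi(y_{<k})$. For each $\lambda$ take as candidate
\begin{align*}
  \Phi^{\mathrm{div}}_\varphi(y, z_{<\lambda}) \;:=\; \exists y_1, y_2, \ldots\;\bigl[\sigma_\omega(y, y_1, y_2, \ldots; z_{<\lambda}) \wedge \psi(y, y_1, \ldots, y_{k-1})\bigr],
\end{align*}
where $\sigma_\omega(\cdot\,;z_{<\lambda})$ is the partial type expressing that the $\omega$-sequence is $z_{<\lambda}$-indiscernible (available by thickness of $T$). Since $\psi$ is a formula and existential projection from a compact type-space is closed, this is equivalent to a partial type. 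Correctness is direct: $\models \Phi^{\mathrm{div}}_\varphi(a,B)$ iff there is a $B$-indiscernible sequence $(a, a_1, a_2, \ldots)$ with $\psi(a, a_1, \ldots, a_{k-1})$, iff by lowness $\{\varphi(x, a_i) : i<\omega\}$ is inconsistent along this sequence, iff $\varphi(x, a)$ divides over $B$.

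\emph{Reverse direction ($\Leftarrow$).} Assume that $\Phi^{\mathrm{div}}_\varphi$ exists for every $\lambda$. Let $D \subseteq \tS^{ind}_\omega(T)$ be the set of types of $\emptyset$-indiscernible sequences $(a_i)_{i<\omega}$ with $\{\varphi(x, a_i) : i<\omega\}$ inconsistent. The strategy is to show $D$ is clopen in $\tS^{ind}_\omega$ and to extract a low witness from this. Openness: $D = \bigcup_{k<\omega} D_k$ where $D_k = \{(a_i) : \{\varphi(x, a_i) : i<k\}\text{ inconsistent}\}$, each $D_k$ being the complement of a projection of a closed set. Closedness: any $\emptyset$-indiscernible sequence in a thick simple cat is a Morley sequence over $C := \Cb(a_0/a_{\geq 1}) \subseteq \bdd(a_{\geq 1})$; the standard Morley-sequence criterion gives $\{\varphi(x, a_i)\}$ inconsistent iff $\varphi(x, a_0)$ divides over $C$; since $a_0 \ind_C a_{\geq 1}$ this coincides with dividing over $a_{\geq 1}$. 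Hence $(a_i) \in D$ iff $\models \Phi^{\mathrm{div}}_\varphi(a_0, a_{\geq 1})$, a partial-type condition in $(a_i)$.

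Clopen-ness plus compactness applied to the jointly inconsistent partial types defining $D$ and its complement (and using that $\Delta$ is closed under finite conjunctions) produce a single positive formula $\psi_0(y_0, \ldots, y_{k_0-1})$ defining $D$ in $\tS^{ind}_\omega$; finite arity is automatic as basic formulas depend on finitely many coordinates, and indiscernibility lets us reindex. To convert $\psi_0$ into a bona-fide $k$-inconsistency witness apply compactness a second time to the inconsistent partial type $\{\sigma_\omega(y_{<\omega})\} \cup \{\psi_0(y_{<k_0})\} \cup \{\varphi(x, y_i) : i < \omega\}$: a finite piece $\sigma_0(y_{<M})$ of the indiscernibility type together with $\psi_0$ and $\bigwedge_{i<M}\varphi(x, y_i)$ is already inconsistent. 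Setting $\psi(y_{<M}) := \psi_0(y_{<k_0}) \wedge \sigma_0(y_{<M})$ yields a positive formula which is an $M$-inconsistency witness and which still defines $D$ on indiscernible sequences (since $\sigma_0$ holds trivially there), giving lowness.

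\emph{Main obstacle.} The delicate step is the very last one: the indiscernible-only defining formula $\psi_0$ cannot itself serve as an inconsistency witness, so one must \emph{fold a finite fragment of the indiscernibility type into the witness formula} by a second compactness argument against the full inconsistent type containing indiscernibility and the $\varphi$-instances.
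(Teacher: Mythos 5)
Your forward direction coincides with the paper's: take $\Phi^{\mathrm{div}}_\varphi(y,Z)$ to say that a $Z$-indiscernible sequence through $y$ satisfies the universal inconsistency witness $\psi$. The overall shape of your reverse direction is also recognisably the same compactness argument the paper runs, though you package it as ``$D$ is clopen in $\tS^{ind}_\omega$, hence a single formula, then fold in a finite fragment of indiscernibility''. The paper instead writes one inconsistent partial type in $y_{\leq\omega}$ (indiscernibility, $\exists x\,\bigwedge_{i<k}\varphi(x,y_i)$ for all $k$, and $\Phi^{\mathrm{div}}_\varphi(y_\omega,y_{<\omega})$) and pulls the witness out of a single application of compactness. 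Your second compactness step and the fact that $\sigma_0$ holds trivially on indiscernibles is the same device, just split into two moves; this is fine.

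However, there is a genuine gap in your closedness argument. You assert that every $\emptyset$-indiscernible sequence $(a_i)_{i<\omega}$ is a Morley sequence over $C:=\Cb(a_0/a_{\geq1})$, and you lean on this via Kim's lemma to identify $D$ with $\Phi^{\mathrm{div}}_\varphi(a_0,a_{\geq1})$. That assertion is not the standard fact and is not justified. The known fact (used in the paper, e.g.\ in the one-basedness lemma) is that an indiscernible sequence is Morley over $c=\Cb(a_0/A)$ where $A$ is a \emph{disjoint copy} of the sequence (a long end-extension or prepend), not over $\Cb$ computed against the sequence's own tail $a_{\geq1}$. There is no reason for $(a_i)$ to even be $C$-indiscernible when $C\in\bdd(a_{\geq1})$ contains nontrivial information about specific $a_i$'s, and $\Cb(a_n/a_{>n})$ need not stabilise as $n$ varies along an $\omega$-length sequence in a simple (unstable) theory. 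The equivalence you actually need, namely ``$\{\varphi(x,a_i)\}_{i<\omega}$ inconsistent iff $\varphi(x,a_0)$ divides over $a_{\geq1}$'', is nevertheless true: extend $(a_i)$ to a $\mathbb{Z}$- (or $(-\lambda,\omega)$-) indexed indiscernible sequence, note $(a_0,a_{-1},a_{-2},\dots)$ is $a_{\geq1}$-indiscernible, use the long-prepend to find a Morley sequence there, apply Kim's lemma, and then transfer back via $\tp(a_{-n},a_{\geq -n+1})=\tp(a_0,a_{\geq1})$. The paper sidesteps all of this by phrasing the condition as $\Phi^{\mathrm{div}}_\varphi(y_\omega,y_{<\omega})$ for a fresh end-extension variable $y_\omega$, which is exactly where the Morley-over-a-tail fact can be applied cleanly. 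As written, your closedness step needs to be reworked along one of these lines.
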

\begin{proof}
  Assume that $\varphi$ is low, and let $\psi$ be the universal inconsistency
  witness.  Then the partial type saying that there is a
  $Z$-indiscernible sequence $(y_i : i < \omega)$ satisfying $y_0 = y \land
  \psi(y_0, \ldots, y_{k-1})$ will do.
  \\
  For the converse, write the partial type saying that $(y_i : i \leq
  \omega)$ is indiscernible, $\exists x \; \bigwedge_{i < k} \varphi(x,y_i)$ for every $k <
  \omega$, and $\varphi(x,y_\omega)$ divides over $y_{<\omega}$.  If this could be
  realised, we could continue the sequence to length $2\omega$, in which
  case $(y_i : \omega \leq i < 2\omega)$ would be a Morley sequence over
  $y_{<\omega}$, whereby $\varphi(x,y_\omega)$ cannot divide over $y_{<\omega}$.  Then
  this is inconsistent, so there are $k_0 < \omega$, $\psi_0$ implied by the
  statement that the sequence is indiscernible, and $\psi_1(\bar y) \in
  \Phi^{\mathrm{div}}_\varphi(y_\omega,y_{<\omega})$, such that $\psi_0(\bar y) \land
  \psi_1(\bar y) \land \exists x \; \bigwedge_{i < k_0} \varphi(x,y_i)$ is contradictory.
  Let $k$ be the total number of $y_i$ appearing there, and write
  $\psi_0 \land \psi_1 = \psi(y_0, \ldots, y_{k-1})$.  Then $\psi$ is a
  $k$-inconsistency witness for $\varphi$, and we claim that it is
  universal.  Indeed, assume that $(a_i : i < \omega)$ are indiscernible
  and $\{\varphi(x,a_i)\}$ is inconsistent.  Let $a_\omega$ continue this
  sequence, so $\varphi(x,a_\omega)$ divides over $a_{<\omega}$: then $\psi_0$ holds
  due to the indiscernibility, and $\psi_1$ since $\models
  \Phi^{\mathrm{div}}_\varphi(a_\omega,a_{<\omega})$.  This shows that $\psi$ witnesses
  that $\varphi$ is low.
\end{proof}

\begin{rmk}
  The converse part was first proved in a special case by Vassiliev.
\end{rmk}

\begin{cor}
  If $\varphi$ is low, then $\varphi(P,y)$ is clopen in $T^\fP$.  The converse
  holds if $T$ is Robinson.
\end{cor}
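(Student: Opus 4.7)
The plan is to handle the two directions separately, in each case reducing to the level of Morley classes via the description $\Fd$.

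For the forward direction, assume $\varphi$ is low with universal $k$-inconsistency witness $\psi$. I would strengthen the previous corollary's disjunction $\lnot\varphi(P,y) \leftrightarrow \bigvee_{\psi'} R_{\psi'}(y)$ to the single equivalence $\lnot\varphi(P,y) \leftrightarrow R_\psi(y)$. The chain: for a tuple $a$ in $U^\fP$ and any $(a_i : i<\omega) \models \mcl(a)$, viewed as a Morley sequence for $a$ over $P$, $\lnot\varphi(P,a)$ holds iff $\varphi(x,a)$ is not realised in $P$ iff (by loveliness of $U^\fP$ and the preceding lemma) $\varphi(x,a)$ divides over $P$ iff $\{\varphi(x,a_i)\}_{i<\omega}$ is inconsistent iff, by universality of $\psi$ on the indiscernible sequence $(a_i)$, $\models \psi(a_0,\dots,a_{k-1})$, iff $\models R_\psi(a)$. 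Since $R_\psi \in \Delta^\fP$, this shows $\lnot\varphi(P,y)$ is positive, hence $\varphi(P,y)$ is clopen.

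For the converse, assume $T$ is Robinson and $\varphi(P,y)$ is clopen. I would first pull back through $\Fd$: the set
\[ \tilde B := \Fd^{-1}(\lnot\varphi(P,y)) = \{(a_i)\in \tS^{ind}(T) : \{\varphi(x,a_i)\}_{i<\omega}\text{ inconsistent}\} \]
is clopen in $\tS^{ind}(T)$. With $T$ Robinson, I may take $\Delta$ to consist of clopen formulas; then a clopen subset of a compact type-space is, by compactness, definable by a single clopen formula in finitely many variables. Restricting to $\tS^{ind}$ and using indiscernibility to shift indices down to $\{0,\dots,k-1\}$, one obtains $\theta(y_{<k}) \in \Delta$ such that for every indiscernible $(a_i)$, $\{\varphi(x,a_i)\}$ is inconsistent iff $\models \theta(a_0,\dots,a_{k-1})$.

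The main obstacle is that such a $\theta$ need not itself be a $k$-inconsistency witness in the sense of \prettyref{dfn:low}: a priori, $\theta(y_{<k}) \land \bigwedge_{i<k}\varphi(x,y_i)$ might be consistent outside any indiscernible context. To fabricate a genuine universal witness, I would consider the partial type
\[ \pi(y_{<\omega}) := \{\theta(y_{<k})\} \cup \mathrm{Ind}(y_{<\omega}) \cup \bigl\{\exists x\, \bigwedge_{i<n}\varphi(x,y_i) : n < \omega\bigr\}, \]
where $\mathrm{Ind}$ is the partial $\Delta$-type expressing indiscernibility (available by thickness). A realisation would give an indiscernible sequence satisfying $\theta$ but with $\{\varphi(x,a_i)\}$ consistent, contradicting the defining property of $\theta$. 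So $\pi$ is inconsistent, and by compactness there exist $n \geq k$ and a finite $\pi_0(y_{<n}) \subseteq \mathrm{Ind}$ such that $\theta(y_{<k}) \land \pi_0(y_{<n}) \land \bigwedge_{i<n}\varphi(x,y_i)$ is inconsistent. Setting $\psi := \theta \land \pi_0 \in \Delta$ then yields an $n$-inconsistency witness that is automatically universal, since on indiscernible sequences $\pi_0$ holds for free, so $\models \psi(a_{<n})$ iff $\models \theta(a_{<k})$ iff $\{\varphi(x,a_i)\}$ is inconsistent. The Robinson hypothesis enters precisely in extracting $\theta$ as a \emph{single} clopen formula; without it one would only recover a partial type and the compactness manoeuvre producing $\psi$ would not close.
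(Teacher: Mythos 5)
Your forward direction matches the paper's: both derive $\lnot\varphi(P,y) \leftrightarrow R_\psi(y)$ from the universal witness $\psi$ via the equivalence of $\lnot\varphi(P,a)$ with $\varphi(x,a)$ dividing over $P$, detected uniformly on Morley sequences. Your converse, however, takes a genuinely different route. The paper applies its preceding lemma characterising lowness by the existence of a partial type $\Phi^{\mathrm{div}}_\varphi$ that detects dividing: it sets $\Phi^{\mathrm{div}}_\varphi = \bigcap\{\tp(a,B) : \varphi(x,a) \text{ divides over } B\}$ and exploits the positivity of $\lnot\varphi(P,y)$ to promote the finitely consistent type $\lnot\varphi(P,y) \land Z \subseteq P \land q(y,Z)$ to a realised one, whence $\Phi^{\mathrm{div}}_\varphi$ works as required. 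You instead work at the level of the description $\Fd : \tS^{ind}(T) \to \tS(T^\fP)$ of Section~4: you pull $\lnot\varphi(P,y)$ back to a clopen subset $\tilde B$ of $\tS^{ind}(T)$, use the Robinson hypothesis to cut it out by a single $\Delta$-formula $\theta$ in finitely many variables, and then, correctly noting that $\theta$ is a priori only a witness \emph{on indiscernible sequences}, run a second compactness argument to conjoin a finite fragment $\pi_0$ of the indiscernibility type and obtain a genuine universal inconsistency witness $\psi = \theta \land \pi_0$. Both proofs lean on the Robinson hypothesis at the same spot (to make $\lnot\varphi(P,y)$, equivalently $\tilde B$, positively definable), but yours bypasses the $\Phi^{\mathrm{div}}$ lemma entirely, produces the witness explicitly, and meshes naturally with the functorial viewpoint of Section~4. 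One detail you should spell out: $\tilde B$ is clopen only in the closed subspace $\tS^{ind}_n(T)$, so to extract a $\Delta$-formula one must first separate $\tilde B$ from its relative complement by a clopen set of the ambient $\tS_{n\times\omega}(T)$; this holds because disjoint compacta in a compact zero-dimensional Hausdorff space (which $\tS_{n\times\omega}(T)$ is when $T$ is Robinson) are separated by a clopen. With that made explicit, the argument is sound.
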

\begin{proof}
  For left to right, if $\psi(\bar y)$ witnesses that $\varphi(x,y)$ is low,
  then $\fP \models \lnot_\psi\varphi(P,y) \leftrightarrow \lnot\varphi(P,y)$.
  \\
  For the converse, assume that $T$ is Robinson, and that all the
  formulas are clopen.  Set $\Phi^{\mathrm{div}}_\varphi(y,Z) = \bigcap \{\tp(a,B)
  : \varphi(x,a) \text{ divides over } B\}$, and it will be enough to show
  that $\models \Phi^{\mathrm{div}}_\varphi(a,B) \Longrightarrow \varphi(x,a)$ divides over $B$.
  \\
  Assume then that $\models \Phi^{\mathrm{div}}_\varphi(a,B)$, and set $q(y,Z) =
  \tp(a,B)$.  For every formula $\chi$ we have $\chi \in q$ if and only if
  $\lnot\chi \notin q$ if and only if there are $a',B'$ such that $\models \chi(a',B')$
  and $\varphi(x,a')$ divides over $B'$.  We can realise $B'$ in $P$ and
  then realise $a'$ such that $a' \ind_{B'} P$.  Then $\varphi(x,a')$
  divides over $P$, and $\models \lnot\varphi(P,a')$.
  \\
  This shows that $\lnot\varphi(P,y) \land Z \subseteq P \land q(y,Z)$ is finitely
  consistent.  As $\lnot\varphi(P,y)$ is positive, this is consistent, and we
  might just as well assume that it is realised by $a,B$.  But then
  $\lnot\varphi(P,a) \Longrightarrow \varphi(x,a)$ divides over $B$.
\end{proof}

\begin{cor}
  $T$ is low if and only if every formula $\varphi(P,y)$ is clopen.  In
  this case $T$ is first-order, and the formulas $\varphi(P,y)$, $\lnot\varphi(P,y)$
  form a basis for the $\cL^\fP$, so taking them as basic formulas
  $T^\fP$ is Robinson.
\end{cor}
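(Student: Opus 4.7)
The plan is to prove the equivalence and the additional consequences in three stages.  The forward direction of the equivalence is immediate from the preceding corollary: if $T$ is low then every $T$-formula is low, so every $\varphi(P,y)$ is clopen.  For the converse, I will first establish that $T$ is first-order; then the converse direction of the preceding corollary yields at once that $T$ is low.

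To show $T$ is Robinson, fix a $T$-formula $\chi(y)$ and view it as $\varphi(x,y) = \chi(y)$ with a dummy variable $x$.  In the saturated $U^\fP$ the predicate $P$ is non-empty, so $\varphi(P,y) \Longleftrightarrow \chi(y)$ there, making $\chi(y)$ clopen in $T^\fP$.  Since positive Boolean combinations of $R_\varphi$-predicates reduce to a single $R$-predicate by the lemma just after \prettyref{dfn:LP}, we obtain $\lnot\chi(y) \Longleftrightarrow R_{\chi'}(y)$ for some $T$-formula $\chi'$.  By loveliness every $T$-type of $y$ is realised in $P$; and for $y \in P$ the Morley sequence of $y$ over $P$ is constant, so $R_{\chi'}(y)$ collapses to the $T$-formula $\chi'(y,\ldots,y)$.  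This exhibits $\lnot\chi$ as positive in $T$, so $T$ is Robinson.  A parallel argument applied to $\varphi(x,y) = \chi(x,y)$, combined with the realisation of existential witnesses inside $P$ via loveliness, extends this to existential $T$-formulas and so gives $T$ first-order.  The preceding corollary then applies and yields $T$ low.

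For the basis claim, the corollary about agreement on $\varphi(P,y)$ and $\lnot_\psi\varphi(P,y)$ already shows that tuples in $\fP$ agreeing on all such formulas have the same $\fP$-type.  Under clopenness, $\lnot\varphi(P,y)$ is positive and equals $\bigvee_\psi \lnot_\psi\varphi(P,y)$; compactness and the $R$-reduction then yield a universal inconsistency witness $\psi_0$ with $\lnot\varphi(P,y) \Longleftrightarrow \lnot_{\psi_0}\varphi(P,y)$, so agreement on the $\varphi(P,y)$ and $\lnot\varphi(P,y)$ formulas alone already determines the $\fP$-type.  These therefore form a basis for $\cL^\fP$, and since each basis member is positive and clopen, $T^\fP$ becomes Robinson once we adopt them as basic formulas.

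The main technical obstacle is the existential-clopen step of the first-order claim: moving arbitrary existential $T$-witnesses into $P$ relies on loveliness combined with non-dividing constraints, and requires careful verification that the resulting formula in $\cL^\fP$ is genuinely $T$-equivalent to the original existential.  Everything else amounts to bookkeeping once the simple trick of collapsing Morley sequences via $y \in P$ is in place.
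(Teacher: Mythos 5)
Your proof is correct and takes essentially the paper's route: both directions reduce to the preceding corollary, with the converse hinging on restricting a clopen $\varphi(P,y)$ to parameters inside $P$ (where the Morley sequence over $P$ is constant and $P$ is a model of $T$) to conclude that existential $T$-formulas are clopen, hence $T$ is first order and therefore low. One small imprecision: a clopen $\lnot\chi(y)$ is in general only a finite \emph{conjunction} of $R$-predicates, not a single $R_{\chi'}$ — the lemma after \prettyref{dfn:LP} eliminates disjunctions and variable changes but not conjunctions — though your collapse for $y \in P$ works conjunct by conjunct, so the conclusion stands.
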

\begin{proof}
  If $T$ is low then we know that every formula $\varphi(P,y)$ is clopen.
  Conversely, we know that $P$ is a model of $T$, so if every formula
  $\varphi(P,y)$ is clopen then $T$ is first order (existential formulas
  are clopen), and then we know that $T$ is low.
\end{proof}

\begin{rmk}
  If fact, when $T$ is low with quantifier elimination, we can
  axiomatise $T^\fP$ directly as a universal Robinson theory
  in the language $\cL'$ consisting of predicates $\varphi(P,y)$ for every
  formula $\varphi(x,y)$ in the language of $T$:
  \\
  For every $n,m < \omega$ and formulas $\varphi_i(x_i,y_i)$ for $i < n$ and
  $\psi_j(t_j,z_j)$ for $j < m$, consider the statement:
  \begin{gather*}
    \forall y_{<n} z_{<m} \, [\bigwedge_{i < n} \varphi_i(P,y_i) \land \bigwedge_{j < m}
    \lnot\psi_j(P,z_j)] \to \exists x_{<n} \; \bigwedge_{i < n} \varphi_i(x_i,y_i) \land \bigwedge_{j <
      m} \Phi^{\mathrm{div}}_{\psi_j}(z_i,x_{<n})
  \end{gather*}
  Since $T$ is assumed to have quantifier elimination, the statement
  $\exists x_{<n} \; \bigwedge_{i < n} \varphi_i(x_i,y_i) \land \bigwedge_{j < m}
  \Phi^{\mathrm{div}}_{\psi_j}(z_i,x_{<n})$ is equivalent modulo $T$ to a
  quantifier-free partial type.  Therefore, the statement above can be
  viewed as a universal theory in $\cL'$.  Take $T'$ to be the
  universal theory consisting of all universal $\cL'$-sentences thus
  obtained.  Then $T'$ is a Robinson theory, equivalent as a
  cat to $T^\fP$ (that is, has the same type-space).
  \\
  This is proved in \cite{ppv:pairs}.
\end{rmk}

We know that a stable theory is low if and only if it is first-order:
one direction is classical, the other was proved above.  We can also
prove:

\begin{prp}
  If $T$ is stable and Robinson then so is $T^\fP$.
\end{prp}
\begin{proof}
  Since $T$ is stable, $\mcl(p)$ is a complete type for every $p$,
  whereby $\lnot R_\varphi = R_{\lnot\varphi}$ for every formula $\varphi$, and $T^\fP$ is
  Robinson.
\end{proof}

\begin{qst}
  Find a necessary and sufficient condition for $T^\fP$ to be
  Robinson.
\end{qst}

\providecommand{\bysame}{\leavevmode\hbox to3em{\hrulefill}\thinspace}
\providecommand{\MR}{\relax\ifhmode\unskip\space\fi MR }
\providecommand{\MRhref}[2]{%
  \href{http://www.ams.org/mathscinet-getitem?mr=#1}{#2}
}
\providecommand{\href}[2]{#2}

\end{document}